\definecolor{lblue}{RGB}{55, 126, 168}
\numberwithin{equation}{section}
\numberwithin{figure}{section}
\newtheorem{theorem}{Theorem}[section]
\newtheorem{lemma}[theorem]{Lemma}
\newtheorem{proposition}[theorem]{Proposition}
\theoremstyle{definition}
\newtheorem{definition}[theorem]{Definition}
\newtheorem{example}[theorem]{Example}
\newtheorem{remark}[theorem]{Remark}
\newtheorem{notation}[theorem]{Notation}
\newcommand{\supp}{\mathrm{supp}}
\newcommand{\diam}{\mathrm{diam}}
\newcommand{\dist}{\mathrm{dist}}
\newcommand*{\dif}{\mathop{}\!\mathrm{d}}
\newcommand{\one}{\mathds{1}}
\newcommand{\set}[1]{\left\{ #1 \right\}}
\newcommand{\Sett}[2]{\left\{ #1  : \, #2 \right\}}
\newcommand\restr[2]{{\left.\kern-\nulldelimiterspace 
		#1 
		\vphantom{\big|}
		\right|_{#2}}} 
\font\titlefont=cmbx12 scaled 1400
\title{\titlefont Stability of heat kernel bounds under pointed Gromov--Hausdorff convergence}
\author{Aobo Chen\thanks{The author is partially supported by the National Natural Science Foundation of China (No.~12271282).}
}
\date{\today}
\begin{document}

\maketitle

\vspace{-0.7cm}

\begin{abstract}

We construct a conservative and strongly local regular symmetric Dirichlet form on the pointed Gromov--Hausdorff limit space and demonstrate the stability of heat kernel estimates under this convergence. Furthermore, we establish the Mosco convergence of the associated energy forms along a subsequence.

	\vskip0.2cm
\noindent {\it Keywords:} Pointed Gromov--Hausdorff convergence, heat kernel estimates, metric geometry, Dirichlet form
	\vskip0.2cm
\noindent {\it 2020 MSC:} Primary 53C23, 60J60; secondary 35K08, 60J46

\end{abstract}

\section{Introduction}\label{s.intro}

Heat kernels, also known as transition densities, play a central role in the analysis of manifolds, metric spaces, and broader mathematical structures. In diverse geometric contexts, such as Riemannian manifolds, Alexandrov spaces, and metric measure spaces, heat kernels give valuable information about the geometric and analytical properties of the underlying space. Understanding their behavior becomes particularly relevant when studying the convergence of a sequence of these spaces.

The convergence of spaces can occur in multiple frameworks, such as the construction of fractals or the study of the tangent and asymptotic cones of general metric measure spaces, which can be formulated in the language of Gromov--Hausdorff topology. In \cite{KK94, KK96, Din02, Xu14}, the convergence of heat kernels, Green functions, and the Harnack inequality for Riemannian manifolds under Gromov--Hausdorff convergence is studied. In \cite{CHK17}, it is shown that if a sequence of measurable spaces equipped with resistance metric satisfies uniform volume doubling (equivalent to certain heat kernel bounds when the resistance forms are local, as shown in \cite{Kum04}), and also Gromov--Hausdorff converges, then the corresponding stochastic processes converge in distribution. As a result, the same type of heat kernel estimates holds for the limiting process.

In this work, we focus on the convergence of the following type of heat kernel bounds in the context of strongly local Dirichlet forms on metric measure spaces. Recall that we say a triple $(X,d,m)$ is a metric measure space, if $(X,d)$ is a locally compact separable metric space and $m$ is a nontrivial locally finite Borel-regular measure on $(X,d)$. For detailed definitions and notation on Dirichlet forms, see Section \ref{s.heat} or \cite{FOT11}.

\begin{definition}\label{d:HKE}
Let $(X,d,m)$ be a metric measure space and $\Psi:[0,\infty)\to[0,\infty)$ be a continuous increasing bijection of $[0, \infty)$ onto itself. We say that a Dirichlet form $(\mathcal{E},\mathcal{F})$ on $L^{2}(X,m)$ satisfies the \emph{heat kernel estimates} \hypertarget{HKE} $\hyperlink{HKE}{\mathrm{HKE}(\Psi)}$, if there exists
		$C_{1},c_{2},c_{3}, \delta\in(0,\infty)$ and a heat kernel $\set{p_t}_{t>0}$ of its semigroup $\{P_{t}\}_{t>0}$ such that for any $t>0$,
  		\begin{align}
		p_{t}(x,y) &\leq \frac{C_{1}}{V(x,\Psi^{-1}(t))} \exp\left(-c_{2}t\Phi\left(c_{3}\frac{d(x, y)}{t}\right)\right)
		\qquad \mbox{for $m$-a.e.\ $x,y \in X$}\\
		\text{and }\ p_{t}(x,y) &\geq \frac{C_{1}^{-1}}{V(x,\Psi^{-1}(t))}
		\qquad \mbox{for $m$-a.e.\ $x,y\in X$ with $d(x,y) \leq \delta \Psi^{-1}(t)$},
		\end{align}
		where $V(x,r):=m(B(x,r))$ and \begin{equation}\label{e.phi}
		\Phi(s):=\Phi_{\Psi}(s):=\sup_{r>0}\left({\frac{s}{r}-\frac{1}{\Psi(r)}}\right),\ s\in[0,\infty).
		\end{equation}
\end{definition}
The heat kernel estimate $\hyperlink{HKE}{\mathrm{HKE}(\Psi_{\beta})}$, where $\Psi_{\beta}(r):=r^{\beta}$ arises in various contexts. For $\beta=2$, it corresponds to the \emph{Gaussian heat kernel estimate}, which holds for Riemannian manifolds with non-negative Ricci curvature \cite{LY86}. For $\beta>2$, it corresponds to the \emph{sub-Gaussian heat kernel estimate}, which is valid for diffusions on many fractals, including the Sierpi\'{n}ski gasket \cite{BP88} and the Sierpi\'{n}ski carpet \cite{BB92}. 

The main result of this paper is as follows. See Sections \ref{s.pgh} and \ref{s.mosco} for the definitions of pointed Gromov--Hausdorff convergence and Mosco convergence, respectively.

\begin{theorem}\label{t.main}
Let $\{(X_{j},d_{j},m_{j}):1\leq j<\infty\}$ be a sequence of metric measure spaces. Let $(\mathcal{E}_{j},\mathcal{F}_{j})$ be a conservative, regular and strongly local symmetric Dirichlet form on $L^{2}(X_{j},m_{j})$ for each $1\leq j<\infty$. Fix functions $\Sett{\Psi_{j}:[0,\infty) \rightarrow [0,\infty)}{1\leq j\leq\infty}$ that are continuous, increasing bijections of $[0, \infty)$ onto itself. Assume that
\begin{enumerate}[label=\textup{(A{\arabic*})},align=right,leftmargin=*,topsep=5pt,parsep=0pt,itemsep=2pt]
\item\label{lb.as1} The Dirichlet form $(\mathcal{E}_{j},\mathcal{F}_{j})$ satisfies $\hyperlink{HKE}{\mathrm{HKE}(\Psi_{j})}$ for each $1\leq j<\infty$, with uniform constant $C_{1}$, $c_{2}$,  $c_{3}$ and $\delta$. 
\item\label{lb.as2} There exist constants $1<\beta\leq\beta^{\prime}$ and $C_{0}>1$ such that
\begin{equation}\label{e.psi}
C^{-1}_{0}\left({\frac{R}{r}}\right)^{\beta}\leq\frac{\Psi_{j}(R)}{\Psi_{j}(r)}\leq C_{0}\left({\frac{R}{r}}\right)^{\beta^{\prime}}\text{for all $1\leq j\leq \infty$ and all $0 < r \leq R<\infty$}.
\end{equation}
\item\label{lb.as3}  $\Psi_{j}\to \Psi_{\infty}$ uniformly on every compact subset of $[0,\infty)$.
\item\label{lb.as4}  There exists $\{V_{j}:[0,\infty)\to [0,\infty):1\leq j\leq \infty\}$, $V_{l}:[0,\infty)\to [0,\infty)$ and $V_{u}:[0,\infty)\to [0,\infty)$ that are increasing functions, and there exist constants $\alpha,\alpha^{\prime}>0$, $C_{\mathrm{v}}\geq1$ such that 

\begin{equation}
C_{\mathrm{v}}^{-1}\left({\frac{R}{r}}\right)^{\alpha}\leq\frac{V_{j}(R)}{V_{j}(r)}\leq C_{\mathrm{v}}\left({\frac{R}{r}}\right)^{\alpha^{\prime}},\quad \forall 0<r\leq R<\infty, j\in \mathbb{N}\cup\{u,l\},
\end{equation}
\begin{equation}\label{e.uesm}
C_{\mathrm{v}}^{-1}V_{j}(r)\leq m_{j}(B_{j}(x,r))\leq C_{\mathrm{v}} V_{j}(r),\ \forall x\in X_{j},\ 0< r<\diam(X_{j},d_{j}),\ 1\leq j<\infty,
\end{equation}and
\begin{equation}\label{e.ueslu}
C_{\mathrm{v}}^{-1}V_{l}(r)\leq V_{j}(r)\leq C_{\mathrm{v}} V_{u}(r),\ \forall r>0,\ 1\leq j\leq\infty.
\end{equation}
\item\label{lb.as5}  $V_{j}\to V_{\infty}$ uniformly on every compact subset of $[0,\infty)$.
\item\label{lb.as6} Assume further that $\{(X_{j},d_{j})\}_{1\leq j<\infty}$ are proper length spaces, $\inf_{1\leq j<\infty}\diam(X_{j},d_{j})>0$, and for some points $p_{j}\in X_{j}$, $1\leq j<\infty$, there holds 
\begin{equation}\label{e.gh}
(X_{j},d_{j},p_{j})\to (X_{\infty},d_{\infty},p_{\infty})\text{ in the pointed Gromov--Hausdorff topology.}
\end{equation}
for some complete metric space $(X_{\infty},d_{\infty})$ and some $p_{\infty}\in X_{\infty}$.
\end{enumerate}
Then the following holds:
\begin{enumerate}[label=\textup{(\arabic*)},align=right,leftmargin=*,topsep=5pt,parsep=0pt,itemsep=2pt]
\item\label{lb.meas} There exists a Radon measure $m_{\infty}$ on the Borel  $\sigma$-algebra of $(X_{\infty}, d_{\infty})$ such that for some $C\geq1$, \begin{equation}\label{e.minfty}
C^{-1}V_{\infty}(r)\leq m_{\infty}(B_{\infty}(x,r))\leq C V_{\infty}(r),\ \forall x\in X_{\infty},\ 0<r<\diam(X_{\infty},d_{\infty}).
\end{equation}
\item\label{lb.hke} There exists a conservative, regular and strongly local symmetric Dirichlet form $(\mathcal{E}_{\infty},\mathcal{F}_{\infty})$ on $L^{2}(X_{\infty}, m_{\infty})$ satisfying $\hyperlink{HKE}{\mathrm{HKE}(\Psi_{\infty})}$. 
\item\label{lb.mosco} Along a subsequence, $\mathcal{E}_{j}\rightarrow \mathcal{E}_{\infty}$ with respect to the Mosco topology, as $j\to\infty$.
\end{enumerate}
\end{theorem}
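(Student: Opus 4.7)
I would proceed in four stages: (i) build $m_\infty$ by a vague-compactness argument, (ii) extract a limit heat kernel from uniform Hölder estimates, (iii) assemble $(\mathcal{E}_\infty,\mathcal{F}_\infty)$ from the limiting semigroup, and (iv) deduce Mosco convergence across the varying $L^2$ spaces from strong semigroup convergence.

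For \ref{lb.meas}, fix pointed $\varepsilon_j$-approximations $f_j:X_\infty\to X_j$ and quasi-inverses $g_j:X_j\to X_\infty$ with $\varepsilon_j\downarrow 0$. Pushing $m_j$ forward by $g_j$ produces Radon measures $\widetilde m_j$ on $X_\infty$ whose mass on each ball $B_\infty(p_\infty,R)$ is uniformly bounded by \ref{lb.as4}--\ref{lb.as5}. Properness of $X_\infty$ gives a vaguely convergent subsequence with Radon limit $m_\infty$, and the two-sided bound \eqref{e.uesm}, together with \ref{lb.as5}, passes to \eqref{e.minfty}.

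For \ref{lb.hke}, $\HKE(\Psi_j)$ is equivalent (in the strongly local, regular, volume-doubling setting, via Barlow--Bass--Kumagai / Grigor'yan--Hu--Lau) to volume doubling plus the parabolic Harnack inequality $\mathrm{PHI}(\Psi_j)$. Together with \ref{lb.as1}--\ref{lb.as4} this yields joint Hölder equicontinuity of the heat kernels $p^j_t(x,y)$ on compact subsets of $(0,\infty)\times X_j\times X_j$ with a modulus independent of $j$. Transplanting through $f_j$ and applying Arzelà--Ascoli along a diagonal exhaustion, I extract a subsequence with $p^j_t(f_j(x),f_j(y))\to p^\infty_t(x,y)$ locally uniformly; the pointwise bounds in Definition~\ref{d:HKE} then transfer using \ref{lb.as3}, \ref{lb.as5} and step~(i). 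Setting $P^\infty_t f(x):=\int p^\infty_t(x,y)f(y)\,dm_\infty(y)$, symmetry, the semigroup identity, Markovianity, and $L^2$-contractivity all pass from $\{P^j_t\}$ by dominated convergence against the Gaussian-type upper bound, producing a symmetric Markov semigroup and hence a unique Dirichlet form $(\mathcal{E}_\infty,\mathcal{F}_\infty)$. Conservativeness follows from the stochastic completeness implied by the $\HKE$ upper bound and $V_u$ via Grigor'yan's volume test. For regularity, $P^\infty_t$ maps $L^2\cap L^\infty$ into $C(X_\infty)$ by joint continuity of $p^\infty$, and Lipschitz cutoffs on the length space $X_\infty$ yield a dense subalgebra of $\mathcal{F}_\infty\cap C_c(X_\infty)$. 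Strong locality is inherited because $\HKE(\Psi_\infty)$ with $\beta>1$ forces the associated Hunt process to have continuous sample paths, which for a regular symmetric form is equivalent to strong locality.

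For \ref{lb.mosco}, along the same subsequence I would invoke the Kuwae--Shioya framework for Mosco convergence of forms on varying $L^2$ spaces, with the connecting maps induced by $f_j$ composed with a density-type identification between $g_j{}_{\ast} m_j$ and $m_\infty$. The locally uniform kernel convergence, combined with the uniform upper bound, upgrades to strong convergence of $P^j_t$ to $P^\infty_t$ in the generalized sense for every $t>0$ on lifts of $C_c(X_\infty)$; by the Kuwae--Shioya equivalence, strong semigroup convergence implies Mosco convergence of the associated quadratic forms, yielding \ref{lb.mosco}. The main obstacle, in my view, is verifying regularity and strong locality of $(\mathcal{E}_\infty,\mathcal{F}_\infty)$ purely from heat-kernel data on a GH limit (building a genuine core on $X_\infty$ and promoting the limit process to one with continuous paths), together with the bookkeeping required to reconcile the approximate isometries $f_j, g_j$ with the Kuwae--Shioya notion of strong convergence so that the Mosco conclusion is intrinsic to $L^2(X_\infty, m_\infty)$.
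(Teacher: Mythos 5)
Your proposal follows the same overall roadmap as the paper (build $m_\infty$ by compactness of push-forward measures, extract a limit kernel, assemble the Dirichlet form from the associated semigroup, deduce Mosco convergence from strong semigroup convergence via the Kuwae--Shioya criterion), but the construction of the limit heat kernel is genuinely different. You propose to pass the global kernels $p^j_t$ directly to the limit via parabolic Harnack/H\"older equicontinuity and Arzel\`a--Ascoli on a diagonal exhaustion. The paper instead works with the \emph{Dirichlet} heat kernels on balls $B_n(p_n,R)$, expands them spectrally, proves eigenvalue and eigenfunction estimates uniform in $n$, passes the eigendata to the limit, recovers $q^{(R)}$ from a Mercer-type expansion, and only then sends $R\to\infty$. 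The spectral route buys cheap proofs of symmetry, the Chapman--Kolmogorov identity, and $L^2$-contractivity (orthonormality plus Bessel), and turns conservativeness into a monotone-limit argument ($q^{(R)}\uparrow q$). Your direct route would instead require careful uniform-in-$j$ tail control when exchanging limits with the convolution integral, and your appeal to Grigor'yan's volume test for conservativeness of the limit form is plausible but would need to be checked for general space--time scalings $\Psi$ rather than $r^\beta$.

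There are, however, two concrete gaps that your plan does not address and that stop step (ii) as written. First, pushing $m_j$ forward along the approximate isometries (your $g_j$, the paper's $f_n$), and integrating transplanted functions against these measures, requires those maps to be \emph{Borel measurable}; generic Gromov--Hausdorff $\varepsilon$-approximations are not. The paper devotes a nontrivial construction to this (Christ's dyadic cubes, Theorem~\ref{t.cubes}, and Proposition~\ref{p.isom}), and the volume-doubling hypothesis enters here. Second, even with measurable $f_j$ the transplanted functions $(x,y)\mapsto p^j_t\bigl(f_j(x),f_j(y)\bigr)$ are not continuous, so the classical Arzel\`a--Ascoli theorem cannot be invoked; the paper uses Kigami's Arzel\`a--Ascoli-type lemma for discontinuous but ``asymptotically equicontinuous'' families (Lemma~\ref{l.a-a}), and some analogue is unavoidable in your route too. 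Both issues are fixable, but they are exactly the kind of ``bookkeeping'' you defer at the end, and without them the extraction of $p^\infty_t$ and the definition of $m_\infty$ do not yet make sense.
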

\begin{remark}\label{r.main}
\begin{enumerate}[label=\textup{({\roman*})},align=right,leftmargin=*,topsep=5pt,parsep=0pt,itemsep=2pt]
\item[]
\item By \cite[Lemma 5.7]{GK17}, under the assumption \ref{lb.as2}, for each $j$, the functions $\Phi_{\Psi_{j}}$ defined by \eqref{e.phi} are $[0, \infty)$-valued on $[0,\infty)$ and are strictly positive on $(0,\infty)$.
\item The assumption $\inf_{1\leq j<\infty}\diam(X_{j},d_{j}) > 0$ in \ref{lb.as6} is made to exclude the degenerate case where $\diam(X_{\infty}, d_{\infty}) = 0$, i.e., when $X_{\infty}$ is a singleton set (see Lemma \ref{l.diam}). If $X_{\infty} = \{p_{\infty}\}$ is a singleton, then any non-trivial measure $m$ on $X_{\infty}$ must be a multiple of the Dirac measure, and $p_{\infty}$ becomes a \emph{trap} for the diffusion process on $X_{\infty}$. In this case, the semigroup of the diffusion is the identity map, the corresponding regular Dirichlet form is $\mathcal{E} \equiv 0$ with domain $\mathcal{F} = \mathbb{R} \one_{\{p_{\infty}\}}=C(X_{\infty})$, and the heat kernel $p_{t}(x,x)=m(X_{\infty})^{-1}$ for all $(t,x)\in(0,\infty)\times \{p_{\infty}\}$.

\item\label{lb.Feller}Any Markov process on a volume doubling metric measure space that admits the heat kernel bound $\hyperlink{HKE}{\mathrm{HKE}(\Psi)}$ in Definition~\ref{d:HKE} is in fact a \emph{Feller} and \emph{strong Feller process}; see \cite[Proposition~3.2]{Lie15}. In particular, the Hunt process associated with $(\mathcal{E}_{\infty},\mathcal{F}_{\infty})$ in Theorem \ref{t.main}-\ref{lb.hke} enjoys the Feller and strong Feller properties. 

\item\label{lb.rm1} If a metric space $(X,d)$ satisfies the \emph{chain condition} \cite[Definition 6.9]{GT12} (for example, if $(X,d)$ is a proper length space \cite[Proposition A.1]{KM20}), and a Dirichlet form $(\mathcal{E},\mathcal{F})$ on $L^{2}(X,m)$ satisfies $\hyperlink{HKE}{\mathrm{HKE}(\Psi)}$, then the heat kernel estimates can be improved to the \emph{full heat kernel bound} \hypertarget{HKEf} $\hyperlink{HKEf}{\mathrm{HKE}_{\mathrm{f}}(\Psi)}$ \cite[Theorem 6.5]{GT12}. Specifically, the upper bound of $p_{t}$ in $\hyperlink{HKE}{\mathrm{HKE}(\Psi)}$ remains valid, while the lower bound of $p_{t}$ in $\hyperlink{HKE}{\mathrm{HKE}(\Psi)}$ is replaced by \[p_{t}(x,y)\geq \frac{C_{1}^{-1}}{V(x,\Psi^{-1}(t))} \exp\left(-c_{4}t\Phi\left(c_{5}\frac{d(x, y)}{t}\right)\right)
		\qquad \mbox{for $m$-a.e.\ $x,y \in X$}\]
		for some $C_{1},c_{4},c_{5}\in(0,\infty)$. Conversely, if the full heat kernel bound $\hyperlink{HKEf}{\mathrm{HKE}_{\mathrm{f}}(\Psi)}$ holds, then the chain condition of the metric space holds \cite[Theorem 2.11]{Mur20}.
		
By \cite[Proposition 11.3.12, Proposition 11.3.14]{HKST15}, \ref{lb.as6} implies that $(X_{\infty},d_{\infty})$ is also a proper length space. Therefore, the full heat kernel bound $\hyperlink{HKEf}{\mathrm{HKE}_{\mathrm{f}}(\Psi_{j})}$ holds for $(\mathcal{E}_{j},\mathcal{F}_{j})$ for all $1\leq j\leq \infty$.
\item\label{lb.rm2} 
Since the assumptions and conclusions in Theorem \ref{t.main} are stable under a bi-Lipschitz change of metric, we may instead assume that \emph{each $d_{j}$ is (uniformly) bi-Lipschitz equivalent to a length metric}, which can be ensured by the quasi-convexity \cite[Proposition 8.3.12]{HKST15} or by the chain condition \cite[Proposition A.1]{KM20}.
\item\label{lb.rm3} It is possible to prove similar results by assuming beforehand the existence of a measure $m_{\infty}$ on $X_{\infty}$ and that the sequence $(X_{n},d_{n},p_{n},m_{n})$ \emph{pointed measured Gromov--Hausdorff converges}\footnote{see \cite[Definition 11.4.5]{HKST15} or \cite[Definition 2.2]{KS03} for definitions.} to $(X_{\infty},d_{\infty},p_{\infty},m_{\infty})$. However, it appears that directly verifying pointed measured Gromov--Hausdorff convergence is more challenging.
\end{enumerate}
\end{remark}

We briefly outline the proof of Theorem \ref{t.main}. The Radon measure $m_{\infty}$ on limit space is derived from the tightness of the push-forward measures. To obtain the limit Dirichlet form $(\mathcal{E}_{\infty},\mathcal{F}_{\infty})$, we first expand the heat kernel of the part of $(\mathcal{E}_{n},\mathcal{F}_{n})$ on the balls $B_{n}(p_{n},R)$ with Dirichlet boundary conditions, using their eigenvalues and eigenfunctions (named \emph{Hilbert--Schmidt expansion}, see \eqref{e.hs(n,r)}). We then apply a version of Arzel\`{a}--Ascoli theorem (Lemma \ref{l.a-a}) to take the limit and obtain the limits of eigenvalues and eigenfunctions, which gives a heat kernel on $B_{\infty}(p_{\infty},R)$. Letting $R\to\infty$, we obtain a heat kernel on $X_{\infty}$ (Theorem \ref{t.qehke}). This heat kernel defines a semigroup (Theorem \ref{t.proQ}), corresponding to a Dirichlet form that satisfies the desired properties. The verification of Mosco convergence relies on the strong convergence of semigroups, as stated in the Mosco--Kuwae--Shioya's theorem (Theorem \ref{t.mks}). 

The construction of limit heat kernels using the Hilbert--Schmidt expansion was first introduced by Ding \cite{Din02} and later studied by Xu \cite{Xu14}, in the context of complete Riemannian manifolds with nonnegative Ricci curvature. Their approach relied on gradient estimates and Harnack's convergence theorem specific to manifolds, which are not applicable in the settings of fractal spaces and general metric measure Dirichlet spaces. In this work, we estimate the eigenvalues and eigenfunctions using only information derived from heat kernel estimates.

In \cite[Section 5]{KS03}, the authors show that under the uniform Poincar\'{e} inequality and other geometric conditions, the Dirichlet forms converge in the Mosco topology. However, it remains unknown whether the Poincar\'{e} inequality $\mathrm{(PI)}$ is stable, i.e., whether it holds for the limit Dirichlet form. In \cite[Section 11.6]{HKST15}, the stability of the $p$-Poincar\'{e} inequality for \emph{upper gradients} is proved. Our results indicate that the uniform heat kernel bounds on varying metric measure Dirichlet spaces imply a heat kernel bound on the limit metric measure Dirichlet space. Building on the well-known results on heat kernel estimates \cite{GHL15}, Theorem \ref{t.main} in this paper establishes that the package of conditions $\mathrm{(V)}+\mathrm{(PI)}+\mathrm{(CSA)}$ should be stable under Gromov--Hausdorff convergence, where $\mathrm{(V)}$ represents the homogeneous volume growth estimate and $\mathrm{(CSA)}$ represents the \emph{cutoff Sobolev inequality} (see \cite{AB15} or \cite[p.1492]{GHL15} for definitions).

This paper is organized as follows. In Section \ref{s.pgh}, we review the definition of pointed Gromov--Hausdorff convergence and prove that the approximate isometries can be chosen to be measurable. In Section \ref{s.doub}, we construct a doubling measure on the limit space and verify Theorem \ref{t.main}-\ref{lb.meas}. In Section \ref{s.heat}, we utilize the Hilbert--Schmidt expansion of the heat kernel and estimate the eigenvalues and eigenfunctions. In Section \ref{s.heatlim}, we construct the heat kernel with Dirichlet boundary conditions in the balls of the limit space. In Section \ref{s.dflim}, we obtain a heat kernel in the limit space and prove Theorem \ref{t.main}-\ref{lb.hke}. In Section \ref{s.mosco}, the Mosco convergence of Dirichlet forms, stated in Theorem \ref{t.main}-\ref{lb.mosco}, is established. In Section \ref{s.example}, we apply Theorem \ref{t.main} to reconstruct the diffusions on the Sierpi\'{n}ski carpet and Sierpi\'{n}ski gasket. We also provide an alternative proof in Theorem \ref{t.inverse} that for any $\alpha\geq1$ and $2\leq\beta\leq\alpha+1$, there exists a metric measure Dirichlet space that satisfies volume growth $r^{\alpha}$ and heat kernel bounds $\hyperlink{HKEf}{\mathrm{HKE}_{\mathrm{f}}(r^{\beta})}$.
\begin{notation}
In this paper, we use the following notation and conventions.
\begin{enumerate}[label=\textup{(\roman*)},align=right,leftmargin=*,topsep=5pt,parsep=0pt,itemsep=2pt]
\item Let $\{(X_{j},d_{j})\}_{1\leq j\leq\infty}$ be metric spaces. For $1\leq j\leq\infty$, $(x,r)\in X_{j}\times (0,\infty)$, we write the open ball in $X_{j}$ centered at $x$ with radii $r$ \[B_{j}(x,r):=\{z\in X_{j}:d_{j}(x,z)<r\}.\]
 		\item Let $X$ be a non-empty set. We define $\one_{A}=\one_{A}^{X}\in\mathbb{R}^{X}$ for $A\subset X$ by
	 \[\one_{A}(x):=\one_{A}^{X}(x):= \begin{cases}
	 	1 & \mbox{if $x \in A$,}\\
	 	0 & \mbox{if $x \notin A$.}
	 \end{cases} \]
	 
		\item Let $X$ be a topological space. We set
		$C(X):=\{f\mid\textrm{$f:X\to\mathbb{R}$, $f$ is continuous}\}$, $C_{b}(X):=\{f\in C(X)\mid\textrm{$f$ is bounded}\}$, $C_{0}(X):=\{f\in C(X)\mid\textrm{$f$ vanishes at infinity}\}$ and
		\[C_c(X):=\{f\in C(X)\mid\textrm{$X\setminus f^{-1}(0)$ has compact closure in $X$}\}.\]
\item For a measure $\mu$ on $(X,\mathcal{B}(X))$ and a measurable function $f:X\rightarrow Y$, we write $f_{\#}\mu$ for the push-forward measure defined by $f_{\#}\mu(A):=\mu(f^{-1}(A))$ for all measurable subsets of $Y$.

\item $\mathbb{Q}_{+}:=\mathbb{Q}\cap(0,\infty)$ is the set of all positive rational numbers. $\mathbb{N}:=\{1,2,\ldots\}$, that is $0\notin \mathbb{N}$.

\item We use the letters $C$, $c$ etc. to denote positive constant whose value is inessential and may change at each occurrence. 
\item For two extended real numbers $A,B\in\mathbb{R}\cup\{-\infty,\infty\}$, let $A\wedge B:=\min(A,B)$ and $A\vee B:=\max(A,B)$.

\item For real valued quantities $f$ and $g$, if there exists an implicit constant $C\geq1$ that depends on inessential parameters such that $f\leq Cg$ then we write $f\lesssim g$.  We write $f\asymp g$ whenever $f\lesssim g$ and $g\lesssim f$.
\end{enumerate}
\end{notation}

\section{Gromov--Hausdorff convergence}\label{s.pgh}

In this section, we review the basic theory of Gromov--Hausdorff distance and Gromov--Hausdorff convergence. The main reference is \cite[Chapter 11]{HKST15}.

Let $(Z,d_{Z})$ be a metric space. We say $(Z,d_{Z})$ is \emph{proper} if the closed balls $\{y\in Z:d_{Z}(z,y)\leq r\}$ are compact for all $(z,r)\in Z\times (0,\infty)$. We say $(Z,d_{Z})$ is a \emph{length} space, if the distance between any two points equals the smallest possible length of curves connecting them. By \cite[Lemma 8.3.11]{HKST15}, every proper length space is geodesic, i.e., any two points can be connected by a curve whose length matches the exact distance between them. 

For any $\epsilon>0$ and any nonempty subset $A\subset Z$, the $\epsilon$-neighborhood of $A$ is defined as the set
\[N_{\epsilon}(A):=\left\{z\in Z:\dist_{Z}(z,A)<\epsilon\right\},\]
where $\dist_{Z}$ is the distance under the metric $d_{Z}$. The Hausdorff distance between two nonempty subsets $A,B\subset Z$ is
\[d_{\mathrm{H}}(A,B):=\inf\left\{\epsilon>0:A\subset N_{\epsilon}(B)\text{ and }B\subset N_{\epsilon}(A)\right\}.\]

We define for each $x=\{x_{j}\}_{j=1}^{\infty}, y=\{y_{j}\}_{j=1}^{\infty}\in\mathbb{R}^{\mathbb{N}}$ the quantities $\lVert x\rVert_{\infty}:=\sup_{1\leq j<\infty}|x_{j}|$ and \[d_{l^{\infty}}(x,y):=\lVert x-y\rVert_{\infty}=\sup_{1\leq j<\infty}|x_{j}-y_{j}|.\] Let \[l^{\infty}:=\left\{ x=\{x_{j}\}_{j=1}^{\infty}\in\mathbb{R}^{\mathbb{N}}:\lVert x\rVert_{\infty}<\infty\right\},\]so that $(l^{\infty},d_{l^{\infty}})$ is a complete metric space. Recall that proper metric spaces are always separable. By the Fr\'{e}chet embedding theorem \cite[p.101]{HKST15}, every separable metric space $(X,d_{X})$ allows an isometric embedding $i:(X,d_{X})\to (l^{\infty},d_{l^{\infty}})$, i.e., $d_{l^{\infty}}( i(x_{1}),i(x_{2}))=d_{X}(x_{1},x_{2})$ for all $x_{1}, x_{2}\in X$. The \emph{Gromov--Hausdorff distance} between two separable metric spaces $(X,d_{X})$ and $(Y,d_{Y})$ is defined as \[d_{\mathrm{GH}}(X,Y):=\inf d^{\infty}_{\mathrm{H}}(i(X),j(Y)),\] where $d^{\infty}_{\mathrm{H}}$ is the Hausdorff distance in $(l^{\infty},d_{l^{\infty}})$, and the infimum is taken over all isometric embeddings $i:(X,d_{X})\to (l^{\infty},d_{l^{\infty}})$ and $j:(Y,d_{Y})\to (l^{\infty},d_{l^{\infty}})$. For a sequence of separable metric spaces $\{(X_{n},d_{n}):1\leq n\leq\infty\}$, we write $(X_{n},d_{n})\xrightarrow{\mathrm{GH}}(X_{\infty},d_{\infty})$ if $d_{\mathrm{GH}}(X_{n},X_{\infty})\to 0$ as $n\to\infty$, and we say that the sequence $\{(X_{n},d_{n}):1\leq n<\infty\}$ Gromov--Hausdorff converges to $(X_{\infty},d_{\infty})$. It is known that the Gromov--Hausdorff distance $d_{\mathrm{GH}}$ gives a complete, separable, and contractible metric on the collection of all isometry classes of compact metric spaces \cite[Theorem 11.1.15]{HKST15}.

As pointed out in \cite[Section 11.3]{HKST15}, the Gromov--Hausdorff convergence is quite restrictive for the class of non-compact metric spaces. Therefore, the following concept of \emph{pointed Gromov--Hausdorff convergence} is developed to deal with (possibly) non-compact spaces.

\begin{definition}[Pointed Gromov--Hausdorff convergence]\label{d.pgh}
A sequence of pointed separable metric spaces $\{(X_{j},d_{j},p_{j}):1\leq j<\infty\}$ is said to \emph{pointed Gromov--Hausdorff converge} to a pointed separable metric space $(X_{\infty},d_{\infty},p_{\infty})$ if, for each $r>0$ and $0<\epsilon<r$, there exists an $i_{0}$ such that for each $i\geq i_{0}$ there is a map $f_{i}^{\epsilon}: B_{i}(p_{i},r)\rightarrow X_{\infty}$ satisfying:
\begin{enumerate}[label=\textup{(\arabic*)},align=right,leftmargin=*,topsep=5pt,parsep=0pt,itemsep=2pt]
    \item $f_{i}^{\epsilon}(p_{i})=p_{\infty}$;
    \item $|d(f_{i}^{\epsilon}(x),f_{i}^{\epsilon}(y))-d_{i}(x,y)|<\epsilon$ for all $x,y\in B_{i}(p_{i},r)$;
    \item $B_{\infty}(p_{\infty},r-\epsilon)\subset  N_{\epsilon}(f_{i}^{\epsilon}(B_{i}(p_{i},r)))$, where $N_{\epsilon}(A)$ is the $\epsilon$-neighborhood of $A$.
\end{enumerate}
In this case, we write $(X_{n},d_{n},p_{n})\xrightarrow{\mathrm{p-GH}}(X_{\infty},d_{\infty},p_{\infty})$.
\end{definition}
By \cite[Proposition 11.3.5]{HKST15}, for bounded separable spaces, the notions of Gromov--Hausdorff convergence and pointed Gromov--Hausdorff convergence coincide. Gromov's compactness theorem in this context can be stated as follows: for every \emph{eventually proper} sequence in a \emph{pointed totally bounded} family of pointed length metric spaces, there exists a subsequence that pointed Gromov--Hausdorff converges to a proper length pointed metric space \cite[Proposition 11.3.12, Theorem 11.3.16]{HKST15}.

In general, the Gromov--Hausdorff approximations $f_{i}^{\epsilon}$ introduced in Definition \ref{d.pgh} may not be measurable with respect to the Borel $\sigma$-algebra. This lack of measurability  brings difficulties when analyzing such approximations. However, as we will see later, under the volume doubling property, we can carefully construct $f_{i}^{\epsilon}$ to be measurable functions. To begin, we recall the following covering theorem, which is due to David \cite{Dav88} and Christ \cite{Chr90}.
\begin{theorem}[{\cite[Theorem 11]{Chr90}}]\label{t.cubes}
Let $(Z,d,\mu)$ be a separable, volume doubling metric measure space, that is, there exists $C_{Z}>0$ such that $\mu(B(z,2r))\leq C_{Z}\mu(B(z,r))$ for all $z\in Z$ and all $r>0$. Then there exists some index set $\{I_{k}\}_{k\in\mathbb{Z}}$, a countable collection of open subsets $\left\{Q_{k,j}\subset Z\;\big|\;j\in I_{k},\ k\in\mathbb{Z}\right\}$, a countable set of points $\left\{z_{k,j}\in Z\;\big|\;j\in I_{k},\ k\in\mathbb{Z}\right\}$, and constants $\delta\in(0,1)$, $0<a_{0}\leq a_{1}<\infty$, such that 
\begin{enumerate}[label=\textup{(\arabic*)},align=right,leftmargin=*,topsep=5pt,parsep=0pt,itemsep=2pt]
    \item\label{lb.cov1} For all $k\in\mathbb{Z}$, 
    \begin{equation}
        \mu\left(Z{\bigg\backslash}\bigcup_{j\in I_{k}}Q_{k,j}\right)=0.
    \end{equation}
    \item\label{lb.cov2}  $Q_{k,i}\cap Q_{k,j}=\emptyset$ for each $k\in\mathbb{Z}$ and $i\neq j\in I_{k}$.
    \item\label{lb.cov3} For each $k\in\mathbb{Z}$ and $j\in I_{k}$, $B(z_{k,j},a_{0}\delta^{k})\subset  Q_{k,j}\subset  B(z_{k,j},a_{1}\delta^{k})$.
\end{enumerate}
Moreover, the constants $\delta\in(0,1)$ and $0<a_{0}\leq a_{1}<\infty$ depend only on $C_{Z}$.
\end{theorem}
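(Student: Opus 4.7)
The plan is to follow Christ's construction, a variant of David's earlier ``dyadic cubes'' for spaces of homogeneous type. Fix $\delta\in(0,1)$ small (in terms of $C_{Z}$) and for each $k\in\mathbb{Z}$ choose a maximal $\delta^{k}$-separated net $\{z_{k,j}\}_{j\in I_{k}}\subset Z$; separability of $(Z,d,\mu)$ forces $I_{k}$ to be countable, maximality gives the cover $Z=\bigcup_{j\in I_{k}}B(z_{k,j},\delta^{k})$, and separation gives $d(z_{k,i},z_{k,j})\geq \delta^{k}$ for $i\neq j$. Next, define a parent map $\pi_{k}\colon I_{k+1}\to I_{k}$ by letting $\pi_{k}(\beta)$ be the index $\alpha\in I_{k}$ minimizing $d(z_{k,\alpha},z_{k+1,\beta})$, with ties broken by a fixed well-ordering of $I_{k}$; by density of the scale-$k$ net one always has $d(z_{k,\pi_{k}(\beta)},z_{k+1,\beta})\leq\delta^{k}$.

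I then build the cubes by tracing back through the parent map. For $x\in Z$ and $m\in\mathbb{Z}$, let $\gamma_{m}(x)\in I_{m}$ be the index of the scale-$m$ Voronoi cell containing $x$, and set
\[Q_{k,j}:=\operatorname{int}\bigl\{x\in Z\;:\;(\pi_{k}\circ\cdots\circ\pi_{m-1})(\gamma_{m}(x))=j\text{ for all sufficiently large }m\bigr\}.\]
Taking the interior discards the measure-zero set of Voronoi boundaries, so item \ref{lb.cov2} (pairwise disjointness at a fixed scale) is immediate; the covering property \ref{lb.cov1} reduces to showing that the set of points whose chain $\gamma_{m}(x)$ never stabilizes has $\mu$-measure zero, which follows from the standard doubling-based argument that each countable union of Voronoi spheres is $\mu$-null and that $\pi_{k}$ has uniformly bounded branching (the latter a consequence of doubling).

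The ball sandwich \ref{lb.cov3} is the quantitative core. For the outer containment, iterating the parent map bounds any eventual descendant $z_{k+m,\gamma}$ of $z_{k,j}$ by $d(z_{k,j},z_{k+m,\gamma})\leq\sum_{\ell=0}^{m-1}\delta^{k+\ell}\leq\delta^{k}/(1-\delta)$; combined with the scale-$(k+m)$ Voronoi cell radius bound $\delta^{k+m}$, this yields $Q_{k,j}\subset B(z_{k,j},a_{1}\delta^{k})$ with $a_{1}=2/(1-\delta)$. For the inner containment, I take $a_{0}\in(0,1)$ small; then for $x\in B(z_{k,j},a_{0}\delta^{k})$ the separation $\delta^{k}$ between $z_{k,j}$ and any sibling $z_{k,\alpha}$ together with the telescoping drift $\delta^{k+1}/(1-\delta)$ from any sibling's descendants forces the descent chain of $x$ to stabilize at $j$, giving $B(z_{k,j},a_{0}\delta^{k})\subset Q_{k,j}$. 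The main obstacle is the consistency of the scales: $\delta$ must be chosen small enough in terms of $C_{Z}$ that $\delta/(1-\delta)<1/2-a_{0}$, and this is precisely where doubling enters, since it bounds the number of competing near-neighbor descendants that must be ruled out at every scale.
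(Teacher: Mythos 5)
The paper's ``proof'' is a pure citation to Christ's Theorem~11 and Lemma~15 in \cite{Chr90}; you are attempting a from-scratch reconstruction, which is a genuinely different (and more ambitious) route. Your set-up --- maximal $\delta^{k}$-separated nets at each scale and a parent map chosen by nearest scale-$k$ net point --- is exactly Christ's Lemma~12, and your outer inclusion and inner inclusion (item~(3)) via telescoping the drift $\delta^{k}+\delta^{k+1}+\cdots$ are sound in outline. But the definition of the cubes you use (trace back the ancestor chain of $x$'s Voronoi index at fine scales, take the interior of the stabilization set) is \emph{not} Christ's construction: Christ defines $Q_{k,j}$ bottom-up as an (adjusted) union of descendant balls $\bar B(z_{l,\beta},a_{0}\delta^{l})$, and then proves the almost-everywhere partition and small-boundary estimate (3.6) separately. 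Your top-down variant is plausible, but it shifts the entire burden of the theorem onto the single assertion that the ancestor chain stabilizes for $\mu$-a.e.~$x$, and that assertion is where the gap is.

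Concretely: you claim stabilization fails only on ``the measure-zero set of Voronoi boundaries.'' That is not enough. A point $x$ can lie strictly in the interior of its scale-$m$ Voronoi cell for every $m$ and yet have $\pi_{m}(\gamma_{m+1}(x))\neq\gamma_{m}(x)$: the scale-$(m+1)$ net point $z_{m+1,\gamma_{m+1}(x)}$ is only within $\delta^{m+1}$ of $x$, so its nearest scale-$m$ net point can differ from $\gamma_{m}(x)$ whenever $x$ lies within roughly $\delta^{m+1}$ of the scale-$m$ Voronoi boundary. The failure set for stabilization is therefore the set of $x$ lying in a $\delta^{m+1}$-neighborhood of the scale-$m$ Voronoi boundary for infinitely many $m$; proving this is $\mu$-null (say via Borel--Cantelli) is precisely Christ's small-boundary estimate \cite[Lemma~13, property~(3.6)]{Chr90}, a nontrivial quantitative doubling estimate that your sketch neither states nor proves. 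The same estimate is also needed a second time: even where the chain does stabilize at some $j$, you then discard to the \emph{interior}, and you must show the discarded boundary layer is null. Finally, a minor point: your explanation of item~(2) --- disjointness ``is immediate'' because taking interiors discards Voronoi boundaries --- conflates two things; the stabilization sets for distinct $j$ are already pairwise disjoint by definition (a chain stabilizes at at most one index), so disjointness needs no appeal to null boundaries at all. In short, the outline is faithful to Christ's strategy at the level of nets and parent maps, but the covering claim (1) rests on the small-boundary lemma, which is the technical heart of \cite[Theorem~11]{Chr90} and is missing from your argument.
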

\begin{proof}
The volume doubling property is equivalent to the definition of being \emph{homogeneous type} in \cite[Definition 1]{Chr90}. By \cite[Theorem 11 (3.1), (3.4) and (3.5)]{Chr90}, we can find cubes that satisfy \ref{lb.cov1} and \ref{lb.cov3}. \ref{lb.cov2} follows from \cite[Lemma 15]{Chr90}.
\end{proof}

The following proposition is the main result of this section.
\begin{proposition}\label{p.isom}
Assume \ref{lb.as4}, \ref{lb.as5} and \ref{lb.as6}. Then there exist two sequences of positive numbers $\{\epsilon_{n}\}_{n=1}^{\infty}$ and $\{R_{n}\}_{n=1}^{\infty}$ with $\epsilon_{n}\downarrow0$ and $R_{n}\uparrow\infty$ as $n\rightarrow\infty$, and two sequences of maps $f_{n}:\overline{B_{{n}}(p_{{n}},R_{n})}\rightarrow{B_{\infty}(p_{\infty},R_{n})}$ and $g_{n}:\overline{B_{\infty}(p_{\infty},R_{n})}\rightarrow{B_{{n}}(p_{{n}},R_{n})}$, $n\geq1$ such that the following are true:
\begin{enumerate}[label=\textup{(\arabic*)},align=right,leftmargin=*,topsep=5pt,parsep=0pt,itemsep=2pt]
    \item\label{lb.isom1} For all $n\geq1$, the maps $f_{n}$ and $g_{n}$ are measurable with respect to the completion of Borel $\sigma$-algebra on $X_{n}$ and $X_{\infty}$, respectively.
    \item\label{lb.isom2} For all $n\geq1$, \begin{equation}\label{e.iso1}
\text{$f_{n}(\overline{B_{n}(p_{n},R_{n})})$ is a finite $(2\epsilon_{n})$-net in $B_{\infty}(p_{\infty},R_{n})$.}    
\end{equation}
    \item\label{lb.isom3} For all $x,y\in\overline{B_{n}(p_{n},R_{n})}$, 
    \begin{equation}\label{e.iso2}
        \left|d_{\infty}(f_{n}(x),f_{n}(y))-d_{n}(x,y)\right|\leq\epsilon_{n},
    \end{equation}
    \begin{equation}    \label{e.iso3}
    d_{n}(x,g_{n}(f_{n}(x)))\leq\epsilon_{n}.
    \end{equation}
    \item\label{lb.isom4} For all $x,y\in \overline{B_{\infty}(p_{\infty},R_{n})}$, 
    \begin{equation}\label{e.iso4}
      \left|d_{\infty}(x,y)-d_{n}(g_{n}(x),g_{n}(y))\right|\leq\epsilon_{n},
    \end{equation}
    \begin{equation} \label{e.iso5}
    d_{\infty}(x,f_{n}(g_{n}(x)))\leq\epsilon_{n}.  
    \end{equation} 
    \item\label{lb.isom5} For all $n\geq1$, \begin{align}
        f_{n}(p_{n})&=p_{\infty}\label{e.iso6},\\
        g_{n}(p_{\infty})&=p_{n}\label{e.iso7}.
    \end{align} 
\end{enumerate}
\end{proposition}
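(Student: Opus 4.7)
The plan is to upgrade the (possibly non-Borel) pointed Gromov--Hausdorff approximations supplied by Definition \ref{d.pgh} to Borel-measurable ones by coarsening through a finite $\epsilon_n$-net in each space. The properness of $(X_n,d_n)$ from \ref{lb.as6}, together with volume doubling from \ref{lb.as4}, ensures that the closed balls $\overline{B_n(p_n,R_n)}$ are compact and admit a finite net at any scale; any function taking finitely many values on a Borel partition is automatically Borel, so coarsening is the natural route to measurability.

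First I would choose sequences $R_n \uparrow \infty$ and $\epsilon_n \downarrow 0$ slowly enough that Definition \ref{d.pgh} yields raw (not necessarily measurable) maps $\tilde f_n \colon B_n(p_n, R_n + 2) \to X_\infty$ with $\tilde f_n(p_n) = p_\infty$, metric distortion bounded by $\epsilon_n/6$, and image $(\epsilon_n/6)$-dense in $B_\infty(p_\infty, R_n + 1)$. Next, pick a maximal $(\epsilon_n/6)$-separated finite set $\{x_i^{(n)}\}_{i=1}^{N_n}$ in $\overline{B_n(p_n, R_n + 1)}$ with $x_1^{(n)} := p_n$, and partition $\overline{B_n(p_n, R_n)}$ greedily by
\[
A_i^{(n)} := \left( \overline{B_n(p_n, R_n)} \cap B_n(x_i^{(n)}, \epsilon_n/3) \right) \setminus \bigcup_{j<i} A_j^{(n)},
\]
which is a finite disjoint cover by Borel sets (by maximality of the net). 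Define $f_n(x) := \tilde f_n(x_i^{(n)})$ on $A_i^{(n)}$. Borel measurability and $f_n(p_n) = p_\infty$ are immediate from the construction, and the distortion bound \eqref{e.iso2} follows from the triangle inequality applied to the $\epsilon_n/3$-coarsening on each side plus the $\epsilon_n/6$-bound on $\tilde f_n$.

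For $g_n$, I would use a Voronoi-type rule with respect to the finite image set: $g_n(y) := x_i^{(n)}$ where $i$ is the smallest index minimizing $d_\infty(y, \tilde f_n(x_i^{(n)}))$. The Voronoi cells are Borel, and since $\tilde f_n(p_n) = p_\infty$ forces $x_1^{(n)} = p_n$ to win at $y = p_\infty$, we get $g_n(p_\infty) = p_n$. The approximate inverse identity \eqref{e.iso3} holds because, for $x \in A_i^{(n)}$, the point $x_i^{(n)}$ wins its own Voronoi competition by achieving distance zero to $\tilde f_n(x_i^{(n)})$, so $g_n(f_n(x)) = x_i^{(n)}$ and $d_n(x, x_i^{(n)}) \leq \epsilon_n/3 \leq \epsilon_n$. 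The reverse identity \eqref{e.iso5} and the finite-net property \eqref{e.iso1} both follow from combining the $(\epsilon_n/6)$-density of $\tilde f_n(B_n(p_n, R_n+1))$ in $B_\infty(p_\infty, R_n+1)$ with the $(\epsilon_n/6)$-density of $\{x_i^{(n)}\}$: any $y \in B_\infty(p_\infty, R_n)$ lies within $\epsilon_n/6 + \epsilon_n/6 + \epsilon_n/6 < \epsilon_n$ of some $\tilde f_n(x_i^{(n)})$, using one triangle inequality and the $\epsilon_n/6$-distortion of $\tilde f_n$.

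The main obstacle is not conceptual but bookkeeping: ensuring all seven inequalities \eqref{e.iso1}--\eqref{e.iso7} hold simultaneously with the same constant $\epsilon_n$, which is why I start from distortion $\epsilon_n/6$ in the raw approximation and take net spacing $\epsilon_n/6$ with partition radius $\epsilon_n/3$. A secondary subtlety is that the raw maps $\tilde f_n$ must be defined on a slightly larger ball than where $f_n$ lives, so that the net $\{x_i^{(n)}\}$ can witness the density property on $B_\infty(p_\infty, R_n)$; this is why I take the raw domain to be $B_n(p_n, R_n+2)$ and the net in $\overline{B_n(p_n, R_n+1)}$. As a side remark, the Christ--David decomposition in Theorem \ref{t.cubes} is not strictly required -- the greedy finite-net partition does the job -- but one could equivalently replace the $A_i^{(n)}$ by Christ cubes at scale $\delta^k \asymp \epsilon_n/a_1$, which volume doubling from \ref{lb.as4} makes available.
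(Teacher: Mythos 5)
Your approach is valid in spirit but genuinely different from the paper's, and simpler in two respects. First, to obtain a Borel partition you use a greedy finite-net partition of $\overline{B_n(p_n, R_n)}$, which is more elementary than the paper's appeal to the David--Christ dyadic decomposition (Theorem~\ref{t.cubes}): your construction only needs compactness of closed balls (properness), not the volume-doubling structure, and it in fact yields genuine Borel measurability rather than measurability with respect to the completion. Second, the paper builds $g_n$ by symmetry, applying the Christ cubes to $(X_\infty, d_\infty, m_\infty)$, which forces it to first invoke Proposition~\ref{p.mea.tan} to produce $m_\infty$ and then check that this construction of $m_\infty$ depends only on $f_n$; your Voronoi-type rule for $g_n$ sidesteps that forward-dependency entirely. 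These are real simplifications.

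There are, however, two gaps in the bookkeeping that must be patched. First, codomain control: if $A_i^{(n)} \neq \emptyset$ then $d_n(x_i^{(n)}, p_n)$ can be as large as $R_n + \epsilon_n/3$, so $d_\infty(\tilde f_n(x_i^{(n)}), p_\infty)$ can reach $R_n + \epsilon_n/2 > R_n$, and your $f_n$ would then land outside the prescribed codomain $B_\infty(p_\infty, R_n)$. The paper explicitly perturbs the net $\{w_j\}$ into the open ball to avoid this; you need an analogous fix (e.g.\ project each $\tilde f_n(x_i^{(n)})$ along a geodesic toward $p_\infty$ to land inside $B_\infty(p_\infty, R_n)$, which is possible since $X_\infty$ is a length space). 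The same issue affects $g_n$: the net $\{x_i^{(n)}\}$ lives in $\overline{B_n(p_n, R_n+1)}$, so the Voronoi winner may lie outside $B_n(p_n, R_n)$, and restricting the competition to net points inside $B_n(p_n, R_n)$ must then be shown not to degrade the approximation too much near the boundary. Second, the constants don't close for~\eqref{e.iso4}: your argument gives $d_\infty(x, \tilde f_n(g_n(x))) \leq \epsilon_n/2$ (combining $\epsilon_n/6$ from density of $\tilde f_n$, $\epsilon_n/6$ for snapping to a net point, and $\epsilon_n/6$ distortion), so the displayed chain produces $\left|d_\infty(x,y) - d_n(g_n(x), g_n(y))\right| \leq \epsilon_n/2 + \epsilon_n/2 + \epsilon_n/6 = 7\epsilon_n/6$, which overshoots. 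Starting from raw distortion and net spacing $\epsilon_n/8$ (or $\epsilon_n/10$, to leave slack for the codomain fix above) repairs the arithmetic without changing the structure of the argument.
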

We call $f_{n}$ and $g_{n}$ a family of \emph{$\epsilon_{n}$-approximate isometries}.
\begin{proof}
Let $\{\epsilon_{n}\}_{n=1}^{\infty}$ and $\{R_{n}\}_{n=1}^{\infty}$ be two positive sequences that will be chosen later, with $\epsilon_{n}\downarrow0$ and $R_{n}\uparrow\infty$ as $n\rightarrow\infty$.

Recall that a closed ball in a length space is the closure of the respective open ball. By the proof of \cite[Proposition 11.3.12]{HKST15}, for each $R>1$, we have \[\overline{B_{n}(p_{n},R)}\xrightarrow{\mathrm{GH}}\overline{B_{\infty}(p_{\infty},R)} \text{ as $n\rightarrow\infty$},\]i.e., \[d_{\text{GH}}(\overline{B_{n}(p_{n},R)},\overline{B_{\infty}(p_{\infty},R)} )\rightarrow0\text{ as $n\rightarrow\infty$.}\] 

By \cite[Lemma 2.8, Proposition 2.9]{Jan17}, there are monotone positive sequences $R_{n}\uparrow \infty$ and $s_{n}\downarrow 0$ such that $d_{\text{GH}}(B_{{n}}(p_{{n}},R_{n}),B_{\infty}(p_{\infty},R_{n}))\leq s_{n}\rightarrow0$ as $n\rightarrow\infty$.
For each $n\geq1$, we apply \cite[Proposition 11.1.9]{HKST15} and assume that $(B_{n}(p_{n},R_{n}),d_{n})$ and $(B_{\infty}(p_{\infty},R_{n}),d_{\infty})$ are isometrically embedded into a common metric space $(Z,d_{Z})$. We do not distinguish $(B_{n}(p_{n},R_{n}),d_{n})$ and $(B_{\infty}(p_{\infty},R_{n}),d_{\infty})$ with their isometric images, respectively. From \cite[Proposition 11.1.11 and (11.1.12)]{HKST15} we have \begin{equation}\label{e.hsdist}
d^{Z}_{\text{H}}(\overline{B_{n}(p_{n},R_{n})},\overline{B_{\infty}(p_{\infty},R_{n})})+d_{Z}(p_{n},p_{\infty})\leq 5s_{n}.
\end{equation} 
We first define the following:
\begin{enumerate}
[label=\textup{(\roman*)},align=right,leftmargin=*,topsep=5pt,parsep=0pt,itemsep=2pt]
\item For each $n\geq1$, we decompose the volume doubling space $(X_{n},d_{n},m_{n})$ according to Theorem \ref{t.cubes} and obtain an index set $\{I^{(n)}_{k}\}_{k\in\mathbb{Z}}$, a family of open sets \[\left\{Q_{k,j}^{(n)}\subset X_{n}\,\big|\, j\in I^{(n)}_{k},k\in\mathbb{Z}\right\},\] a set of points $\left\{z_{k,j}^{(n)}\in X_{n}\,\big|\, j\in I^{(n)}_{k},k\in\mathbb{Z}\right\}$, and constants $\delta\in(0,1)$ and $0<a_{0}\leq a_{1}$ that are independent of $n$. Define \[J_{k}^{(n)}:=\{j\in I^{(n)}_{k}:Q_{k,j}^{(n)}\cap \overline{B_{n}(p_{n},R_{n})}\neq\emptyset\}.\] For each $j\in J_{k}^{(n)}$, if $z_{k,j}^{(n)}\in Q_{k,j}^{(n)}\cap \overline{B_{n}(p_{n},R_{n})}$, we define $\widetilde{z}_{k,j}^{(n)}$ to be $z_{k,j}^{(n)}$, and if otherwise we define $\widetilde{z}_{k,j}^{(n)}$ to be any arbitrary (but fixed) point in $Q_{k,j}^{(n)}\cap \overline{B_{n}(p_{n},R_{n})}$.
\item \label{lb.constfn}Since $\overline{B_{\infty}(p_{\infty},R_{n})}$ is compact, thus totally bounded, we can find a finite $s_{n}$-net of $\overline{B_{\infty}(p_{\infty},R_{n})}$ with cardinality $N_{n}$, say $\{w_{j}\}_{j=1}^{N_{n}}$. We slightly modify this net so that $\{w_{j}\}_{j=1}^{N_{n}}\subset  {B_{\infty}(p_{\infty},R_{n})}$.  For each $z\in \overline{B_{n}(p_{n},R_{n})}$, by \eqref{e.hsdist} we are able to find $w_{z}\in \{w_{j}\}_{j=1}^{N_{n}}$ such that $d_{Z}(z,w_{z})\leq 6s_{n}$.
\end{enumerate}
We now define $f_{k,n}: \overline{B_{n}(p_{n},R_{n})}\to \{w_{j}\}_{j=1}^{N_{n}}\cup \{p_{\infty}\}\subset B_{\infty}(p_{\infty},R_{n})$ by 
\begin{equation}
f_{k,n}(z):=
\begin{cases}
p_{\infty} \; &\text{ if }z=p_{n}\\
w_{\widetilde{z}_{k,j}^{(n)}}\; &\text{ if }z\in \left(Q_{k,j}^{(n)}\cap \overline{B_{n}(p_{n},R_{n})}\right)\setminus\{p_{n}\},\ j\in J_{k}^{(n)}\\
w_{z} \quad &\text{ if }z\in \left(\overline{B_{n}(p_{n},R_{n})}\setminus\bigcup_{j\in J_{k}^{(n)}} Q_{k,j}^{(n)}\right)\setminus\{p_{n}\}.
\end{cases}
\end{equation}
It is easy to verify, using \eqref{e.hsdist}, the definition of $z\mapsto w_{z}$ in \ref{lb.constfn} and triangle inequality that \begin{equation}\label{e.disfzz}
d_{Z}(f_{n,k}(z),z)\leq d_{Z}(f_{n,k}(z),w_{z})+d_{Z}(w_{z},z)\leq 2a_{1}\delta^{k}+6s_{n} \text{ for all $z\in \overline{B_{n}(p_{n},R_{n})}$}.
\end{equation} Therefore for $z_{1},z_{2}\in  \overline{B_{n}(p_{n},R_{n})}$, 
\begin{align}
&\phantom{\leq \ }\abs{d_{\infty}(f_{n,k}(z_{1}),f_{n,k}(z_{2}))-d_{n}(z_{1},z_{2})}
\\
&=\abs{d_{Z}(f_{n,k}(z_{1}),f_{n,k}(z_{2}))-d_{Z}(z_{1},z_{2})}
\\
&\leq d_{Z}(f_{n,k}(z_{1}),z_{1})+d_{Z}(f_{n,k}(z_{2}),z_{2})\leq 12(a_{1}\delta^{k}+s_{n}).\label{e.fkniso}
\end{align}

For each $w\in \overline{B_{\infty}(p_{\infty},R_{n})}$, we can find a $v_{w}\in \overline{B_{n}(p_{n},R_{n})}$ such that $d_{Z}(w,v_{w})\leq 6s_{n}$. Since the ball $B_{n}(v_{w},s_{n})$ must intersect one of the sets $\{Q_{k,j}^{(n)}\cap \overline{B_{n}(p_{n},R_{n})}\}_{j\in J_{k}^{(n)}}$ as the measure $m_{n}$ has full support, we can choose $\widetilde{v}_{w}\in \bigcup_{j\in J_{k}^{(n)}}\left(Q_{k,j}^{(n)}\cap \overline{B_{n}(p_{n},R_{n})}\right)\setminus\{p_{n}\}$ so that $d_{Z}(w,\widetilde{v}_{w})\leq 7s_{n}$. Therefore \begin{align}
d_{\infty}(f_{k,n}(\widetilde{v}_{w}),w)&=d_{Z}(f_{k,n}(\widetilde{v}_{w}),w)\\
&\leq d_{Z}(f_{k,n}(\widetilde{v}_{w}),\widetilde{v}_{w})+d_{Z}(\widetilde{v}_{w},w)\\
&\overset{\eqref{e.disfzz}}{\leq} 12(a_{1}\delta^{k}+s_{n})+s_{n}\leq 24(a_{1}\delta^{k}+s_{n})
\end{align}
so \begin{equation}\label{e.fnnet}
\text{$f_{k,n}(\overline{B_{n}(p_{n},R_{n})})$ is a finite $24(a_{1}\delta^{k}+s_{n})$-net in $B_{\infty}(p_{\infty},R_{n})$.}
\end{equation}

Now let $f_{n}:=f_{n,n}$ and $\epsilon_{n}=12(a_{1}\delta^{n}+s_{n})$. Since every subset of  $\overline{B_{n}(p_{n},R_{n})}\setminus\bigcup_{j\in J_{k}^{(n)}} Q_{k,j}^{(n)}$ is in the completion of Borel $\sigma$-algebra on $\overline{B_{n}(p_{n},R_{n})}$, we see that \ref{lb.isom1} holds for $f_{n}$. \eqref{e.iso1}, \eqref{e.iso2} and \eqref{e.iso6} follows from \eqref{e.fnnet}, \eqref{e.fkniso} and the definition of $f_{n}$, respectively. 

From Proposition \ref{p.mea.tan}, whose proof depends only on the existence of $f_{n}$, \eqref{e.iso2}, \eqref{e.iso1} and \eqref{e.iso6}, we see that $(X_{\infty},d_{\infty})$ admits a volume doubling measure. Thus we can apply Theorem \ref{t.cubes} again to the limit space $(X_{\infty},d_{\infty})$ and construct $g_{n}$, in the same way as we construct $f_{n}$, so that \ref{lb.isom1}, \eqref{e.iso4} and \eqref{e.iso7} hold. By \eqref{e.iso2}, \eqref{e.iso4} and triangle inequality, \[d_{n}(x,g_{n}(f_{n}(x)))\leq d_{Z}(x,f_{n}(x))+d_{Z}(f_{n}(x),g_{n}(f_{n}(x)))\leq \epsilon_{n},\]which gives \eqref{e.iso3}. \eqref{e.iso5} holds by the same argument.
\end{proof}

The following lemma, concerning the convergence of diameters, will be used in subsequent estimates. Its proof is a direct adaptation of \cite[Proposition 2.13]{Jan17}.
\begin{lemma}\label{l.diam}
If $(X_{n},d_{n},p_{n})$ are proper length spaces that pointed Gromov--Hausdorff converge to $ (X_{\infty},d_{\infty},p_{\infty})$, then \[\diam(X_{j},d_{j})\to \diam(X_{\infty},d_{\infty})\text{ as $j\to\infty$}.\]
\end{lemma}
Since we assumed in \ref{lb.as6} that \begin{equation}
\ell:=\inf_{1\leq j<\infty}\diam(X_{j},d_{j})>0\label{e.infdiam}
\end{equation}
By Lemma \ref{l.diam}, we know that $\diam(X_{\infty},d_{\infty})\geq\ell>0$ so $X_{\infty}$ is not a singleton.

\section{Doubling measure on limit space}\label{s.doub}
In this section, we assume $(X_{j},d_{j},p_{j})\to (X_{\infty},d_{\infty},p_{\infty})$ under the pointed Gromov--Hausdorff convergence. Let $\{f_{n}\}$ and $\{g_{n}\}$ be the family of $\epsilon_{n}$-approximate isometries in Proposition \ref{p.isom}. The goal in this section is to construct a Radon measure $m_{\infty}$ on $X_{\infty}$ that satisfies \eqref{e.minfty}.

\begin{lemma}[{\cite[Proposition 11.5.3]{HKST15}}]\label{l.annu}
    Let $(Z,d)$ be a length space and $\mu$ be a volume doubling measure on $Z$ with constant $C_{\mathrm{v}}$. Then for every $z\in Z$ and $0\leq r<R$, \[\mu\left(B(z,R)\setminus\overline{B(z,r)}\right)\leq C\left(\frac{R-r}{R}\right)^{\gamma}\mu(B(z,R)),\]where \[\gamma=\frac{\log\left({1+C_{\mathrm{v}}^{-4}}\right)}{\log3}\text{ and } C=6^{\gamma}.\]
\end{lemma}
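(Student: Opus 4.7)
The plan is to prove this by iterating a one-step annular comparison. The length space hypothesis is used to produce, for each point in a thin outer annulus, a ``sibling'' in a thicker inner annulus at controlled distance, and doubling then compares the associated ball measures. Set $t := R - r$. The easy case $t > R/3$ is handled by the trivial bound $\mu(B(z,R) \setminus \overline{B(z,r)}) \leq \mu(B(z,R))$, since then $((R-r)/R)^\gamma > 3^{-\gamma}$ and $6^\gamma \geq 3^\gamma$. So assume $3t \leq R$, and write $A_{\mathrm{out}} := B(z,R) \setminus \overline{B(z,R-t)}$ and $A_{\mathrm{in}} := B(z,R-t) \setminus \overline{B(z,R-3t)}$.

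The single-step estimate I would prove is $\mu(A_{\mathrm{in}}) \geq C_{\mathrm{v}}^{-4}\, \mu(A_{\mathrm{out}})$, equivalently
\[
\mu(A_{\mathrm{out}}) \leq \frac{1}{1 + C_{\mathrm{v}}^{-4}}\, \mu\bigl(B(z,R) \setminus \overline{B(z,R-3t)}\bigr).
\]
For each $x \in A_{\mathrm{out}}$ (where in particular $d(z,x) > R - t \geq 2t > 3t/2$), the length space axiom gives, for any $\epsilon > 0$, a rectifiable curve from $x$ to $z$ of length less than $d(z,x) + \epsilon$; let $y(x)$ be the point at arclength $3t/2$ from $x$ along it. Then $d(x, y(x)) \leq 3t/2$ and $d(z, y(x)) \leq d(z,x) - 3t/2 + \epsilon$, from which a triangle-inequality check (take $\epsilon < t/4$) yields $R - 3t < d(z,u) < R - t$ for every $u \in B(y(x), t/4)$, hence $B(y(x), t/4) \subset A_{\mathrm{in}}$. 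Since $B(x, t/4) \subset B(y(x), 2t)$, three applications of doubling give $\mu(B(y(x), t/4)) \geq C_{\mathrm{v}}^{-3}\, \mu(B(x, t/4))$. A Vitali $5r$-covering selects $\{x_i\} \subset A_{\mathrm{out}}$ with $\{B(x_i, t/4)\}$ pairwise disjoint and $A_{\mathrm{out}} \subset \bigcup_i B(x_i, 5t/4)$, so $\mu(A_{\mathrm{out}}) \leq C_{\mathrm{v}}^3 \sum_i \mu(B(x_i, t/4))$. A secondary packing argument, using that distinct $x_i$ are $t/2$-separated and $y(x_i)$ lies within $13t/8$ of $x_i$, bounds the multiplicity of $\{B(y(x_i), t/4)\}$ in $A_{\mathrm{in}}$ by a constant depending only on $C_{\mathrm{v}}$; combining these with the sibling comparison and tuning the bookkeeping produces the factor $C_{\mathrm{v}}^{-4}$.

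Iterating $N := \lfloor \log_3(R/t) \rfloor$ times (legitimate because $3^k t \leq R$ for $0 \leq k \leq N$) yields
\[
\mu\bigl(B(z,R) \setminus \overline{B(z, r)}\bigr) \leq (1 + C_{\mathrm{v}}^{-4})^{-N}\, \mu(B(z,R)) \leq 6^\gamma (t/R)^\gamma\, \mu(B(z,R))
\]
with $\gamma = \log(1 + C_{\mathrm{v}}^{-4})/\log 3$, using that $3^N \geq R/(3t)$. The main obstacle is the precise bookkeeping that delivers the exponent $C_{\mathrm{v}}^{-4}$ rather than some larger power of $C_{\mathrm{v}}^{-1}$: matching the stated constants requires tuning the walk length, sibling radius, and annular thicknesses so that the doubling radius ratios are sharp, and running the secondary Vitali argument to control overlap in $A_{\mathrm{in}}$. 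The length space hypothesis is used only once, in constructing $y(x)$; without it no such inward point need exist, and the conclusion genuinely fails in merely doubling (non-length) metric measure spaces.
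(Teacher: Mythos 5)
The paper does not itself prove this lemma; it simply cites \cite[Proposition 11.5.3]{HKST15}. So I'll evaluate your argument on its own merits. Your overall plan --- handle $t>R/3$ trivially, prove a one-step gain $\mu(A_{\mathrm{in}})\geq C_{\mathrm{v}}^{-4}\mu(A_{\mathrm{out}})$ by walking each outer point inward to a sibling ball and extracting a disjoint subfamily, then iterate with $N=\lfloor\log_3(R/t)\rfloor$ and $3^N\geq R/(3t)$ --- is the right structure, and the iteration and easy-case dispatch are both correct. But the one-step estimate has a genuine gap, and it cannot be repaired by ``tuning the bookkeeping'' with your radii. With Vitali radius $t/4$ and walk length $3t/2$, the centers $x_i$ are only $t/2$-separated while $d(x_i,y(x_i))\leq 3t/2$, so $d(y(x_i),y(x_j))\geq t/2-3t<0$: there is \emph{no} lower bound on the separation of the siblings, and the balls $B(y(x_i),t/4)\subset A_{\mathrm{in}}$ can overlap with multiplicity $M$ bounded only by some power of $C_{\mathrm{v}}$ (a routine packing estimate gives $M\lesssim C_{\mathrm{v}}^{5}$, not an absolute constant). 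Your chain then reads $\mu(A_{\mathrm{out}})\leq C_{\mathrm{v}}^{3}\sum_i\mu(B(x_i,t/4))\leq C_{\mathrm{v}}^{3}\cdot C_{\mathrm{v}}^{3}\sum_i\mu(B(y(x_i),t/4))\leq C_{\mathrm{v}}^{6}M\,\mu(A_{\mathrm{in}})$, and since $C_{\mathrm{v}}>1$ the exponent $6$ already exceeds $4$ before the overlap factor even appears. No amount of bookkeeping inside this setup recovers the stated $\gamma$.

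The fix is to choose the covering radius \emph{large}, not small. Take a \emph{maximal} pairwise-disjoint family $\{B(x_i,2t)\}_i$ with $x_i\in A_{\mathrm{out}}$. Then (i) maximality gives $A_{\mathrm{out}}\subset\bigcup_i B(x_i,4t)$, which costs only \emph{one} doubling rather than the three of the $5r$-Vitali enlargement; (ii) the $x_i$ are now $4t$-separated, so after walking $3t/2$ inward one gets $d(y(x_i),y(x_j))\geq 4t-3t=t$, and the balls $B(y(x_i),\rho')$ with $\rho'$ just below $t/2$ (small enough to absorb the $\epsilon$ from the near-geodesic, say $\epsilon<t/32$) are genuinely pairwise disjoint and contained in $A_{\mathrm{in}}$, eliminating the overlap factor entirely; and (iii) $B(x_i,2t)\subset B(y(x_i),7t/2)$ and $7t/2<8\rho'$, so the sibling comparison costs exactly three doublings. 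This produces $\mu(A_{\mathrm{out}})\leq C_{\mathrm{v}}^{1+3}\mu(A_{\mathrm{in}})=C_{\mathrm{v}}^{4}\mu(A_{\mathrm{in}})$, after which your iteration delivers the stated $\gamma=\log(1+C_{\mathrm{v}}^{-4})/\log 3$ and $C=6^\gamma$.
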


The following proposition includes the proof of Theorem \ref{t.main}-\ref{lb.meas}.
\begin{proposition}\label{p.mea.tan}
   Assume \ref{lb.as4}, \ref{lb.as5}, and \ref{lb.as6}. There exists a Radon measure $m_{\infty}$ on $(X_{\infty},d_{\infty})$ such that
\begin{enumerate}[label=\textup{(\arabic*)},align=right,leftmargin=*,topsep=5pt,parsep=0pt,itemsep=2pt]
\item the volume growth satisfies \eqref{e.minfty}.
\item along some subsequence of $\{(X_{n},d_{n},p_{n})\}_{n=1}^{\infty}$, 
     \[ \left(\left(f_{n}\right)_{\#}\left(m_{n}|_{\overline{B_{n}(p_{n},R_{n})}}\right)\right)\Big|_{B_{\infty}(p_{\infty},R)}\rightarrow m_{\infty}|_{B_{\infty}(p_{\infty},R)}\quad\text{weakly as }n\rightarrow\infty,\]
    for every $R\in\mathbb{Q}_{+}\cup\{R_{n}\}_{n=1}^{\infty}$, in the sense that, for all $\phi\in C(B_{\infty}(p_{\infty},R))$ that is bounded and uniformly continuous,
     \begin{equation}\label{e.vague}
     \int_{B_{\infty}(p_{\infty},R)}\phi \dif\left(f_{n}\right)_{\#}\left(m_{n}|_{\overline{B_{n}(p_{n},R_{n})}}\right)\rightarrow\int_{B_{\infty}(p_{\infty},R)}\phi\dif m_{\infty}\text{ as }n\rightarrow\infty.
     \end{equation}
\end{enumerate}
\end{proposition}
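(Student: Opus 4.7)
The plan is to construct $m_\infty$ as a weak limit of the push-forward measures $\mu_n := (f_n)_{\#}(m_n|_{\overline{B_n(p_n,R_n)}})$ on $X_\infty$ (restricted to balls of bounded radius), and then to verify the volume bound \eqref{e.minfty} by transferring the doubling estimates on each $m_n$ through the approximate isometries of Proposition~\ref{p.isom}. These $\mu_n$ are Radon measures on $X_\infty$ by the measurability of $f_n$ in \ref{lb.isom1}, and $(X_\infty,d_\infty)$ is a proper length space by \ref{lb.as6} and Remark~\ref{r.main}-\ref{lb.rm1}, so every closed ball $\overline{B_\infty(p_\infty,R)}$ is compact.

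First I would establish uniform mass bounds. Using $f_n(p_n)=p_\infty$ and \eqref{e.iso2}, for $n$ large with $\epsilon_n<1$ and $R_n>R+1$ one has $f_n^{-1}(B_\infty(p_\infty,R)) \cap \overline{B_n(p_n,R_n)} \subset B_n(p_n,R+\epsilon_n)$, so $\mu_n(B_\infty(p_\infty,R)) \le C_{\mathrm{v}}V_n(R+1) \le C V_u(R+1)$ by \eqref{e.uesm} and \eqref{e.ueslu}. Regarding $\mu_n|_{\overline{B_\infty(p_\infty,R)}}$ as positive Radon measures on the compact space $\overline{B_\infty(p_\infty,R)}$ with uniformly bounded mass, Banach-Alaoglu (equivalently Prokhorov) yields subsequential weak limits. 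A diagonal extraction over the countable set $\mathbb{Q}_+\cup\{R_n\}_{n\ge 1}$ then produces a single subsequence (which I continue to index by $n$) along which $\mu_n|_{B_\infty(p_\infty,R)}$ converges weakly to some Radon measure $\nu^{(R)}$ for every $R$ in this set. Testing against any $\phi\in C_c(B_\infty(p_\infty,R_1))$ with $R_1<R_2$ shows $\nu^{(R_2)}|_{B_\infty(p_\infty,R_1)}=\nu^{(R_1)}$, so the $\nu^{(R)}$ glue to a single Radon measure $m_\infty$ on $X_\infty$ that is finite on bounded (hence compact) subsets.

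For \eqref{e.minfty}, given $x\in X_\infty$ and $0<r<\diam(X_\infty,d_\infty)$, I approximate $x$ by $g_n(x) \in X_n$. Using \eqref{e.iso2}--\eqref{e.iso5} and taking $n$ large enough that $r-2\epsilon_n>0$ and $B_n(g_n(x),r+2\epsilon_n) \subset \overline{B_n(p_n,R_n)}$ (invoking Lemma~\ref{l.diam} to keep $r$ strictly below $\diam(X_n,d_n)$), one sandwiches
\[
B_n(g_n(x),r-2\epsilon_n) \subset f_n^{-1}(B_\infty(x,r)) \subset B_n(g_n(x),r+2\epsilon_n),
\]
so \eqref{e.uesm} gives $C_{\mathrm{v}}^{-1}V_n(r-2\epsilon_n)\le \mu_n(B_\infty(x,r))\le C_{\mathrm{v}}V_n(r+2\epsilon_n)$. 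Combined with \ref{lb.as5}, the Portmanteau inequality $m_\infty(B_\infty(x,r)) \le \liminf_n \mu_n(B_\infty(x,r))$ for the open ball yields the upper bound. The main obstacle is the lower bound: weak convergence respects open sets only from below, so the natural estimate goes in the wrong direction for the open ball. I would instead pass through a slightly smaller closed ball,
\[
m_\infty(B_\infty(x,r)) \ge m_\infty(\overline{B_\infty(x,r-\eta)}) \ge \limsup_n \mu_n(\overline{B_\infty(x,r-\eta)}) \ge C_{\mathrm{v}}^{-1}V_\infty((r-\eta)^{-}),
\]
and then let $\eta\downarrow 0$; the reverse-doubling bound of \ref{lb.as4} inherited by $V_\infty$ as a uniform limit ensures that the discrepancy between $V_\infty((r-\eta)^{-})$ and $V_\infty(r)$ costs only a universal constant factor, which is absorbed into the constant $C$ of \eqref{e.minfty}.
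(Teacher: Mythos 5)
Your overall strategy --- push forward $m_n$ by $f_n$, use a weak compactness argument to extract a limit, glue the limits over increasing $R$, and transfer the two-sided volume estimate through the approximate isometries --- mirrors the paper's Steps~1--4, and your Portmanteau-plus-doubling argument for \eqref{e.minfty} is a legitimate alternative to the paper's Urysohn-function squeeze. There is, however, a genuine gap in the compactness step. You take weak$^*$ limits of $\mu_n\vert_{\overline{B_\infty(p_\infty,R)}}$ in $C(\overline{B_\infty(p_\infty,R)})^*$, which gives convergence against test functions that are continuous on the \emph{closed} ball. But \eqref{e.vague} asserts convergence of $\mu_n$ restricted to the \emph{open} ball $B_\infty(p_\infty,R)$ against bounded uniformly continuous $\phi$; such $\phi$ extend continuously to $\overline{B_\infty(p_\infty,R)}$ and in general do not vanish on $\partial B_\infty(p_\infty,R)$. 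Your Banach--Alaoglu limit does not rule out mass piling up on this sphere (recall each $\mu_n$ is supported on a finite net, so $\mu_n(\partial B_\infty(p_\infty,R))$ need not be zero), and if $\limsup_n \mu_n(\partial B_\infty(p_\infty,R))>0$ the limit against $\tilde\phi$ on the closed ball would not identify $\int_{B_\infty(p_\infty,R)}\phi\,\mathrm{d}m_\infty$.

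The paper closes exactly this hole in its Step~2 by proving tightness on the open ball: using the annulus estimate (Lemma~\ref{l.annu}) transferred through $f_n$, it shows $\mu_n\bigl(B_\infty(p_\infty,R)\setminus\overline{B_\infty(p_\infty,r)}\bigr)$ is uniformly small as $r\uparrow R$, so no mass escapes to (or concentrates near) $\partial B_\infty(p_\infty,R)$, and then invokes the non-compact Prokhorov theorem to obtain weak convergence in the sense of \eqref{e.vague}. Your proof needs this annulus/tightness ingredient explicitly; with it, the passage from the closed-ball weak$^*$ limit to \eqref{e.vague} is routine. A secondary small point in your lower bound: you should pick $R$ large enough that $\overline{B_\infty(x,r)}\subset B_\infty(p_\infty,R)$ so the compact-set Portmanteau inequality applies inside the fixed ball, and the one-sided limit $V_\infty((r-\eta)^-)$ is controlled by the \emph{forward} doubling bound $V_\infty(r)/V_\infty(r-\eta)\le C_{\mathrm v}(r/(r-\eta))^{\alpha'}$ (not reverse doubling), which indeed costs only a fixed constant as $\eta\downarrow 0$.
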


\begin{proof}
    The proof consists of four steps. For each $R>0$, we will construct a measure on the ball $B_{\infty}(p_{\infty},R)$, and then unify these measures to get the desired globally defined measure.
    \begin{enumerate}[label=\textit{Step {\arabic*}.},align=right,leftmargin=*,topsep=5pt,parsep=0pt,itemsep=2pt]%
\item For the first step, we fix the $0<R<\infty$. Let $\{\epsilon_{n}\}$ and $\{R_{n}\}$ be in Proposition \ref{p.isom}. Since $R_{n}\rightarrow\infty$, we assume that $R_{n}>R+1$. By \eqref{e.iso2} and \eqref{e.iso6}, \[
        B_{n}(p_{n},R-\epsilon_{n})\subset  \left\{x\in \overline{B_{n}(p_{n},R_{n})}\;\Big|\; f_{n}(x)\in B_{\infty}(p_{\infty},R)\right\}\subset  B_{n}(p_{n},R+\epsilon_{n}).\] 
    Thus 
        \[\begin{aligned}C_{\mathrm{v}}^{-1}V_{n}(R-\epsilon_{n})&\leq m_{n}(B_{n}(p_{n},R-\epsilon_{n}))\\ &\leq\left(f_{n}\right)_{\#}\left(m_{n}|_{\overline{B_{n}(p_{n},R_{n})}}\right)(B_{\infty}(p_{\infty},R))\\
        &\leq m_{n}(B_{n}(p_{n},R+\epsilon_{n}))\leq C_{\mathrm{v}}V_{n}(R+\epsilon_{n})
    \end{aligned}\]
    We may choose a subsequence such that along this subsequence, the limit\[\displaystyle{\lim_{n\rightarrow\infty}\left(f_{n}\right)_{\#}\left(m_{n}|_{\overline{B_{n}(p_{n},R_{n})}}\right)(B_{\infty}(p_{\infty},R))}\] exists and is positive. 
   \item For this $R$ and along this subsequence, we claim that \[\left\{\left(\left(f_{n}\right)_{\#}\left(m_{n}|_{\overline{B_{n}(p_{n},R_{n})}}\right)\right)\Big|_{B_{\infty}(p_{\infty},R)}\right\}\] as a set of measures on $B_{\infty}(p_{\infty},R)$, is tight, that is, for every $\epsilon>0$, there exists a compact subset $K\subset  B_{\infty}(p_{\infty},R)$ such that \[\left(f_{n}\right)_{\#}\left(m_{n}|_{\overline{B_{n}(p_{n},R_{n})}}\right)\left(B_{\infty}(p_{\infty},R)\setminus K\right)\leq\epsilon\quad\text{for all } n\geq1.\] In fact, for any given $\epsilon>0$, we can choose $r_{\epsilon}<R$  and $N_{\epsilon}\in\mathbb{N}_{+}$ such that, for all $r\in(r_{\epsilon},R)$ and $n\geq N_{\epsilon}$,
    \[\begin{aligned} &\left(f_{n}\right)_{\#}\left(m_{n}|_{\overline{B_{n}(p_{n},R_{n})}}\right)\left(B_{\infty}(p_{\infty},R)\setminus \overline{B_{\infty}(p_{\infty},r)}\right)\\ =\ &m_{n}\left(\left\{x\in \overline{B_{n}(p_{n},R_{n})}\;\Big|\; f_{n}(x)\in B_{\infty}(p_{\infty},R)\setminus \overline{B_{\infty}(p_{\infty},r)} \right\}\right)\\ \leq\ &m_{n}\left(B_{n}(p_{n},R+2\epsilon_{n})\setminus \overline{B_{n}(p_{n},r-2\epsilon_{n})}\right)\\ \leq\ & 6^{\gamma}C_{\mathrm{v}}(R-r+4\epsilon_{n})^{\gamma}(R+2\epsilon_{n})^{-\gamma}V_{u}(R+2\epsilon_{n})\\ \leq\ & 6^{\gamma}C_{\mathrm{v}}(R-r_{\epsilon}+4\epsilon_{n})^{\gamma}(R+2\epsilon_{n})^{-\gamma}V_{u}(R+2\epsilon_{n})<\epsilon
    \end{aligned}\]
    Since $\left\{\left(\left(f_{n}\right)_{\#}\left(m_{n}|_{\overline{B_{n}(p_{n},R_{n})}}\right)\right)\Big|_{B_{\infty}(p_{\infty},R)}\right\}_{n=1}^{N_{\epsilon}}$ is a finite set and by \eqref{e.iso1} the elements in the image of $f_{n}$ are also finite for each $n$, we can choose a $r^{\prime}_{\epsilon}<R$ such that, for all $r\in(r^{\prime}_{\epsilon},R)$, $f_{n}^{-1}\left(B_{\infty}(p_{\infty},R)\setminus \overline{B_{\infty}(p_{\infty},r)}\right)=\emptyset$, $1\leq n\leq N_{\epsilon}$. Hence, for such $r$,
    \begin{equation*}
        \left(f_{n}\right)_{\#}\left(m_{n}|_{\overline{B_{n}(p_{n},R_{n})}}\right)\left(B_{\infty}(p_{\infty},R)\setminus \overline{B_{\infty}(p_{\infty},r)}\right)=0,\quad 1\leq n\leq N_{\epsilon}
    \end{equation*}
    Thus for any $r\in(\max(r^{\prime}_{\epsilon},r_{\epsilon}),R)$, \begin{equation*}
        \left(f_{n}\right)_{\#}\left(m_{n}|_{\overline{B_{n}(p_{n},R_{n})}}\right)\left(B_{\infty}(p_{\infty},R)\setminus \overline{B_{\infty}(p_{\infty},r)}\right)<\epsilon, \text{ for all }n\geq1.
    \end{equation*}
which means that $\left\{\left(\left(f_{n}\right)_{\#}\left(m_{n}|_{\overline{B_{n}(p_{n},R_{n})}}\right)\right)\Big|_{B_{\infty}(p_{\infty},R)}\right\}_{n=1}^{\infty}$ is tight. We then apply the non-compact version of Prokhorov's theorem \cite[Exercise 1.10.29]{Tao10} and a well-known fact that, if the vague convergent limit is a probability measure then the convergence holds in the weak sense \cite[Problem 5.10]{Bil99}, there exists a non-zero probability measure $\widetilde{\mu_{R}}$ on $B_{\infty}(p_{\infty},R)$ such that, for some subsequence, \[\frac{\left(\left(f_{n}\right)_{\#}\left(m_{n}|_{\overline{B_{n}(p_{n},R_{n})}}\right)\right)\Big|_{B_{\infty}(p_{\infty},R)}}{\left(\left(f_{n}\right)_{\#}\left(m_{n}|_{\overline{B_{n}(p_{n},R_{n})}}\right)(B_{\infty}(p_{\infty},R))\right)}\rightarrow \widetilde{\mu_{R}}\text{ weakly as }n\rightarrow\infty.\]
    We now write $\mu_{R}=\lim_{n\rightarrow\infty}\left(f_{n}\right)_{\#}\left(m_{n}|_{\overline{B_{n}(p_{n},R_{n})}}\right)(B_{\infty}(p_{\infty},R))\cdot\widetilde{\mu_{R}}.$ By a standard diagonal argument, we can extract a common subsequence of $\mathbb{N}$ such that \begin{equation}\label{e.weakpf}
    \left(\left(f_{n}\right)_{\#}\left(m_{n}|_{\overline{B_{n}(p_{n},R_{n})}}\right)\right)\Big|_{B_{\infty}(p_{\infty},R)}\rightarrow \mu_{R} \quad\text{weakly, for all }R\in\mathbb{Q}_{+}\cup\{R_{n}\}_{n=1}^{\infty}.
    \end{equation}

\item If $R_{1}\neq R_{2}$, let us show that $m_{R_{1}}$ and $m_{R_{2}}$ agree on their common domain. Without loss of generality, assume $R_{1}<R_{2}$. It is straightforward to see that \[\left(\left(f_{n}\right)_{\#}\left(m_{n}|_{\overline{B_{n}(p_{n},R_{n})}}\right)\right)\Big|_{B_{\infty}(p_{\infty},R_{1})}\rightarrow m_{R_{2}}|_{B_{\infty}(p_{\infty},R_{1})}\quad \text{weakly},\]
    by the uniqueness of weak convergence, we conclude that $m_{R_{2}}|_{B_{\infty}(p_{\infty},R_{1})}=m_{R_{1}}.$ We can then define a set function $m_{\infty}$ on $(X_{\infty},d_{\infty})$ by \[m_{\infty}(E)=\lim_{\mathbb{Q}_{+}\ni R\rightarrow\infty} \mu_{R}\left(E\cap B_{\infty}(p_{\infty},R)\right), \quad\text{for all Borel subsets of }(X_{\infty},d_{\infty}).\]The limit exists, as the sequence is monotone increasing. It is straightforward to verify by \cite[Proposition 3.3.44]{HKST15} that $m_{\infty}$ is a Radon measure.

\item The final step is to verify that $m_{\infty}$ satisfies \eqref{e.minfty}. In fact, for any $x\in X_{\infty}$ and $0<r<\diam(X_{\infty},d_{\infty})$, we may choose $n$ large enough such that $B_{\infty}(x,r)\subset  B_{\infty}(p_{\infty},R_{n})$ and $r+{4}\epsilon_{n}<\diam(X_{n},d_{n})$ by Lemma \ref{l.diam}. Since every metric space is normal, by the Urysohn's lemma, there exist functions $\phi_{1}\in C_{c}(B_{\infty}(x,r);[0,1])$ with $\phi_{1}=1$ on $B_{\infty}(x,r-\epsilon_{n})$, and $\phi_{2}\in C_{c}(B_{\infty}(x,r+\epsilon_{n});[0,1])$ with $\phi_{2}=1$ on $B_{\infty}(x,r)$. By \eqref{e.iso1}, we can find a $x_{n}\in B_{n}(p_{n},R_{n})$ such that $d_{\infty}(f_{n}(x_{n}),x)\leq 2\epsilon_{n}$. Therefore 
\[\begin{aligned}
C_{\mathrm{v}}^{-1}V_{n}(r-4\epsilon_{n})&\leq m_{n}\left(B_{n}\left(x_{n},r-4\epsilon_{n}\right)\right)\\ &\leq\left(f_{n}\right)_{\#}\left(m_{n}|_{\overline{B_{n}(p_{n},R_{n})}}\right)\left(B_{\infty}\left(x,r-\epsilon_{n}\right)\right)\\ &\leq\left(f_{n}\right)_{\#}\left(m_{n}|_{\overline{B_{n}(p_{n},R_{n})}}\right)(B_{\infty}(x,r+\epsilon_{n}))\\
    &\leq m_{n}(B_{n}(x_{n},r+4\epsilon_{n}))\leq C_{\mathrm{v}}V_{n}(r+4\epsilon_{n}).
    \end{aligned}\]
    Hence by the convergence of $V_{n}$ and using the uniform volume doubling property \ref{lb.as4} when $\epsilon_{n}\leq r/8$, we have \[\begin{aligned}
        2^{-\alpha^{\prime}}C_{\mathrm{v}}^{-1}V_{\infty}(r)&\leq\liminf_{n\rightarrow\infty}\left(f_{n}\right)_{\#}\left(m_{n}|_{\overline{B_{n}(p_{n},R_{n})}}\right)(B_{\infty}(x,r-\epsilon_{n}))\\ &\leq\lim_{n\rightarrow\infty}\int_{B_{\infty}(x,r)}\phi_{1}\dif \left(f_{n}\right)_{\#}\left(m_{n}|_{\overline{B_{n}(p_{n},R_{n})}}\right)\\&\leq \mu_{R}(B_{\infty}(x,r))=m_{\infty}(B_{\infty}(x,r))\\&\leq \lim_{n\rightarrow\infty}\int_{B_{\infty}(x,r)}\phi_{2}\dif \left(f_{n}\right)_{\#}\left(m_{n}|_{\overline{B_{n}(p_{n},R_{n})}}\right)\leq C_{\mathrm{v}}{2^{\alpha^{\prime}}}V_{\infty}(r),
    \end{aligned}\]
    which gives \eqref{e.minfty}. The weak convergence of measures is given by \eqref{e.weakpf}.
\end{enumerate}
\end{proof}

The following concept of convergence of the $L^{2}$ space is a special case of convergence of Hilbert spaces considered in \cite[Section 2.2]{KS03}.
\begin{definition}\label{d.conhil}
Let $\left\{(X_{n},m_{n}):1\leq n \leq \infty\right\}$ be a sequence of measurable spaces. We say that the Hilbert spaces \emph{$L^{2}(X_{n},m_{n})$ converge to $L^{2}(X_{\infty},m_{\infty})$} if there exists a dense subspace $\mathcal{C}\subset L^{2}(X_{\infty},m_{\infty})$ and a sequence of linear operators $\upphi_{n}:\mathcal{C}\to L^{2}(X_{n},m_{n})$ with the following property:
\begin{equation}
\lim_{n\to\infty}\lVert \upphi_{n} u\rVert_{L^{2}(X_{n},m_{n})}=\lVert u\rVert_{L^{2}(X_{\infty},m_{\infty})},\ \forall u\in\mathcal{C}.
\end{equation}
\end{definition}
\begin{theorem}\label{t.conv-Hil}
Assume \ref{lb.as4}, \ref{lb.as5}, \ref{lb.as6} and let $m_{\infty}$ be given by Proposition \ref{p.mea.tan}. Then $L^{2}(X_{n},m_{n})$ converges to $L^{2}(X_{\infty},m_{\infty})$. Moreover, we can take $\mathcal{C}:=C_{c}(X_{\infty})$ and $\upphi_{n}: \mathcal{C}\to L^{2}(X_{n},m_{n})$ defined by \begin{equation}\label{e.upphi}
\upphi_{n} u(x):=\begin{dcases}
(u\circ f_{n})(x)\ \quad & x\in \overline{B_{n}(p_{n},R_{n})}\\
0\ \quad & x\in X_{n}\setminus \overline{B_{n}(p_{n},R_{n})},
\end{dcases}
\end{equation}
in Definition \ref{d.conhil}.
\end{theorem}
\begin{proof}
For $u\in \mathcal{C}$, we assume that $\supp(u)\subset B_{\infty}(p_{\infty},R)$ for some $R\in\mathbb{Q}_{+}$. Then by \eqref{e.vague} \begin{align}
\lVert \upphi_{n} u\rVert_{L^{2}(X_{n},m_{n})}^{2}&=\int_{X_{n}}(\upphi_{n} u)^{2}\dif m_{n}=\int_{\overline{B_{n}(p_{n},R_{n})}}(u\circ f_{n})(x)^{2}\dif m_{n}(x)\\
&=\int_{B_{\infty}(p_{\infty},R)}u^{2}\dif \left(f_{n}\right)_{\#}\left(m_{n}|_{\overline{B_{n}(p_{n},R_{n})}}\right)\\
&\to \int_{B_{\infty}(p_{\infty},R)}u^{2}\dif m_{\infty}=\lVert u\rVert_{L^{2}(X_{\infty},m_{\infty})}^{2}
\end{align}
as $n\to\infty$.
\end{proof}
\section{Estimates on heat kernel and eigenfunctions}\label{s.heat}
In this section, we will estimate the growth of eigenvalues and the eigenfunctions of Dirichlet forms on balls with Dirichlet boundary conditions. Throughout this section, we assume that all the conditions in Theorem \ref{t.main} are satisfied.

We begin by briefly reviewing some standard notation related to Dirichlet forms and refer the reader to \cite{FOT11} for a more detailed discussion. Given a metric measure space $(X,d,m)$, we say that $(\mathcal{E},\mathcal{F})$ is a \emph{regular, symmetric Dirichlet form} on $L^{2}(X,m)$, if it satisfies the following conditions:
\begin{enumerate}[label=\textup{({\roman*})},align=right,leftmargin=*,topsep=5pt,parsep=0pt,itemsep=2pt]
    \item $\mathcal{F}$ is a dense linear subspace of $L^{2}(X,m)$;
    \item $\mathcal{E}$ is a non-negative definite symmetric bilinear form on $\mathcal{F}\times\mathcal{F}$;
    \item $\mathcal{F}$ is a Hilbert space with inner product $\mathcal{E}_{1}:=\mathcal{E}+\langle\cdot,\cdot\rangle_{L^{2}(X,m)}$;
    \item for every $u\in \mathcal{F}$, $v:=(0\vee u)\wedge1\in\mathcal{F}$ and $\mathcal{E}(v, v)\leq\mathcal{E}(u, u)$;
    \item $\mathcal{F}\cap C_{c}(X)$ is dense both in $(\mathcal{F},\sqrt{\mathcal{E}_{1}})$ and in $(C_{c}(X),\lVert\cdot\rVert_{\sup})$.
\end{enumerate}If, in addition, $(\mathcal{E},\mathcal{F})$ satisfies
\begin{enumerate}
  \item[(vi)] $\mathcal{E}(f, g)=0$ for any $f, g\in\mathcal{F}$ with $\supp_{m}(f)$, $\supp_{m}(g)$ compact, and $\supp_{m}(f-a\one_{X})\cap \supp_{m}(g)=\emptyset$ for some $a\in\mathbb{R}$. Here, $\supp_{m}(f)$ denotes the support of the measure $|f|\dif m$.
\end{enumerate}
then we call $(\mathcal{E},\mathcal{F})$ a \emph{regular, strongly local and symmetric Dirichlet form}. Let $\{P_{t}:L^{2}(X,m)\to L^{2}(X,m)\}_{t>0}$ denote the semigroup corresponding to $(\mathcal{E},\mathcal{F})$. We say $(\mathcal{E},\mathcal{F})$ is \emph{conservative} if $P_{t}\one_{X}=\one_{X}$ for every $t>0$.

Given an open set $D\subset X$, we define another Dirichlet form $(\mathcal{E}^{D},\mathcal{F}^{D})$ by letting $\mathcal{F}^{D}$ be the closure of $\mathcal{F}\cap C_{c}(D)$ under the norm of $\mathcal{E}_{1}$, which ensures that $\mathcal{F}^{D}$ is a linear space of $\mathcal{F}$. For $u,v\in \mathcal{F}^{D}\subset \mathcal{F}$, we define $\mathcal{E}^{D}(u,v):=\mathcal{E}(u,v)$. It is known that $(\mathcal{E}^{D},\mathcal{F}^{D})$ is also a regular symmetric Dirichlet form on $L^{2}(D,\restr{m}{D})$ \cite[Theorem 3.3.9]{CF12}, and is called the \emph{part} of $(\mathcal{E},\mathcal{F})$ on $D$. Let $\{P^{D}_{t}\}$ be the semigroup corresponding to $(\mathcal{E}^{D},\mathcal{F}^{D})$ and let $p^{D}:(0,\infty)\times D\times D\to[0,\infty)$ be its heat kernel.

By \cite[Theorem 3.1]{BGK12}, if $(\mathcal{E},\mathcal{F})$ satisfies the heat kernel estimates stated in Definition \ref{d:HKE}, and if $(X,d,m)$ satisfies the volume doubling property, then the heat kernels of $(\mathcal{E},\mathcal{F})$ and of $(\mathcal{E}^{D},\mathcal{F}^{D})$ are continuous.

Returning to our setting, consider a sequence of metric measure spaces $(X_{n},d_{n},m_{n})$ and, for each $n\geq1$, a regular, strongly local symmetric Dirichlet form $(\mathcal{E}_{n},\mathcal{F}_{n})$ on $L^{2}(X_{n},m_{n})$. For each $R>0$, we write $(\mathcal{E}_{n}^{(R)},\mathcal{F}_{n}^{(R)}):=(\mathcal{E}_{n}^{B_{n}(p_{n},R)},\mathcal{F}_{n}^{B_{n}(p_{n},R)})$ as the part of $(\mathcal{E}_{n},\mathcal{F}_{n})$ on $B_{n}(p_{n},R)$ for brevity. The semigroup corresponding to $(\mathcal{E}_{n}^{(R)},\mathcal{F}_{n}^{(R)})$ is denoted by $\{P^{(n,R)}_{t}\}_{t>0}$, and its heat kernel by $p^{(n,R)}(t,x,y):=p^{B_{n}(p_{n},R)}(t,x,y)$ for $t>0$ and $x,y\in B_{n}(p_{n},R)$. For convenience, we set $p^{(n,R)}(t,x,y)=0$ if $x$ or $y$ is outside $B_{n}(p_{n},R)$.

By \cite[Theorem 2.1.4]{Dav89}, the generator of $(\mathcal{E}_{n}^{(R)},\mathcal{F}_{n}^{(R)})$ has a discrete spectrum $\{\lambda_{n,j}^{(R)}\}_{j=1}^{\infty}$ (written in increasing order and repeated according to multiplicity). Let $\{\phi^{(R)}_{n,j}\}_{j=1}^{\infty}$ be the corresponding eigenfunction normalized by $\lVert\phi^{(R)}_{n,j}\rVert_{L^{2}(B_{n}(p_{n},R),m_{n})}=1$. Moreover, the following Hilbert--Schmidt expansion holds:
\begin{equation}\label{e.hs(n,r)}
    p^{(n,R)}(t,x,y)=\sum_{j=1}^{\infty}\exp(-\lambda_{n,j}^{(R)}t)\phi^{(R)}_{n,j}(x)\phi^{(R)}_{n,j}(y)\quad\text{for all }t>0\text{ and }x,y\in B_{n}(p_{n},R),
\end{equation}
where this series converges uniformly on $[\delta,\infty)\times B_{n}(p_{n},R)\times B_{n}(p_{n},R)$ for any $\delta>0$ and $n\geq1$. We first estimate the growth of eigenvalues, which requires the following \emph{capacity condition}:
\begin{lemma}\label{l.cap}
    There exist constants $\kappa_{\mathrm{cap}}\in (0,1)$ and $C_{\mathrm{cap}}>1$ such that, for any $n\geq1$ and any ball $B_{n}=B_{n}(x,r)$ in $(X_{n},d_{n})$ of radius $r>0$, there exists a continuous cut-off function $\phi\in\mathcal{F}_{n}$ with $0\leq\phi\leq1$, $\restr{\phi}{\kappa_{\mathrm{cap}} B_{n}}=1$, $\supp(\phi)\subset  B_{n}$, and \begin{equation}\label{e.cap<}
    \mathcal{E}_{n}(\phi,\phi)\leq C_{\mathrm{cap}}\frac{V_{n}(r)}{\Psi_{n}(r)}.
    \end{equation}
\end{lemma}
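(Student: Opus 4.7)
The plan is to construct $\phi$ by truncating a renormalized ``heat bump'' obtained by applying the Dirichlet heat semigroup on $B=B_{n}(x,r)$ to the indicator of a concentric sub-ball. Let $(\mathcal{E}_{n}^{B},\mathcal{F}_{n}^{B})$ denote the part of $(\mathcal{E}_{n},\mathcal{F}_{n})$ on $B$, with semigroup $\{P_{t}^{B}\}_{t>0}$ and continuous heat kernel $p^{B}(t,\cdot,\cdot)$; continuity follows from $\HKE(\Psi_{n})$ and volume doubling via \cite[Theorem 3.1]{BGK12}. I would choose small parameters $\sigma,\kappa\in(0,1)$ and a time $t_{0}\asymp\Psi_{n}(\sigma r)$ (all independent of $n$ and $r$), write $B^{\circ}:=B_{n}(x,\sigma r)$, and set $\psi:=P_{t_{0}}^{B}\one_{B^{\circ}}$. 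Then $\psi\in\mathcal{F}_{n}^{B}\subset\mathcal{F}_{n}$, $0\leq\psi\leq 1$, $\supp(\psi)\subset\overline{B}$, and $\psi$ is continuous.

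First I would bound $\mathcal{E}_{n}(\psi,\psi)$ using the standard spectral-theoretic semigroup estimate
\[\mathcal{E}_{n}^{B}(P_{s}^{B}f,P_{s}^{B}f)\leq \frac{\lVert f\rVert_{L^{2}(B,m_{n})}^{2}}{2s},\qquad f\in L^{2}(B,m_{n}),\ s>0,\]
obtained from $\lambda e^{-2s\lambda}\leq(2se)^{-1}$ by the spectral calculus applied to the (positive, self-adjoint) generator. Applied with $f=\one_{B^{\circ}}$ and $s=t_{0}$, and combined with \ref{lb.as4} (volume doubling) and \ref{lb.as2} (doubling of $\Psi_{n}$), this yields
\[\mathcal{E}_{n}(\psi,\psi)=\mathcal{E}_{n}^{B}(\psi,\psi)\leq \frac{m_{n}(B^{\circ})}{2t_{0}}\leq C\,\frac{V_{n}(r)}{\Psi_{n}(r)}.\]
Next I would establish a pointwise lower bound $\psi\geq c_{0}>0$ on $B_{n}(x,\kappa r)$ with $c_{0}$ uniform in $n$. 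For the global kernel $p^{(n)}$, the near-diagonal lower bound in Definition \ref{d:HKE} gives $p^{(n)}(t_{0},y,z)\geq c/V_{n}(r)$ whenever $y\in B_{n}(x,\kappa r)$, $z\in B^{\circ}$, provided $\kappa$ and $\sigma$ are small enough that $d_{n}(y,z)\leq\delta\Psi_{n}^{-1}(t_{0})$. The Dirichlet kernel $p^{B}$ is related to $p^{(n)}$ by subtracting, via the strong Markov property, the contribution from paths that exit $B$ before time $t_{0}$; bounding that contribution with the Gaussian-type upper bound in Definition \ref{d:HKE} together with \ref{lb.as2} makes it at most $c/(2V_{n}(r))$ once $\sigma$ is taken sufficiently small, uniformly in $n$ by the uniform constants supplied by \ref{lb.as1}, \ref{lb.as2}, \ref{lb.as4}. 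Integrating over $z\in B^{\circ}$ and using volume doubling then yields $\psi(y)\geq c_{0}:=c\,m_{n}(B^{\circ})/(2V_{n}(r))>0$ for all $y\in B_{n}(x,\kappa r)$.

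Finally, I would set $\phi:=(\psi/c_{0})\wedge 1$. By the normal-contraction property (iv) of Dirichlet forms, $\phi\in\mathcal{F}_{n}$ with $\mathcal{E}_{n}(\phi,\phi)\leq c_{0}^{-2}\mathcal{E}_{n}(\psi,\psi)\leq C_{\mathrm{cap}}V_{n}(r)/\Psi_{n}(r)$, while $\phi$ is continuous, equals $1$ on the open ball $B_{n}(x,\kappa r)$, and is supported in $\overline{B}$; setting $\kappa_{\mathrm{cap}}:=\kappa$ completes the proof. The hard step will be the uniform lower bound on $\psi$: converting the \emph{global} HKE lower bound into a lower bound for the \emph{Dirichlet} kernel on $B$ requires a careful exit-probability estimate at the space--time scale $(r,\Psi_{n}(r))$, and its uniformity in $n$ rests entirely on the uniform constants supplied by \ref{lb.as1}, \ref{lb.as2}, \ref{lb.as4}.
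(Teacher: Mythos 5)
Your approach is genuinely different from the paper's. The paper's proof is a one-line citation: it observes that the proof of \cite[Theorem 1.2]{GHL15} (that $\HKE$ implies the capacity condition $(\mathrm{cap})_\beta$) is quantitative in the constants, so the same constants work for all $(\mathcal{E}_n,\mathcal{F}_n)$. You instead give an explicit, self-contained construction via the ``caloric cutoff'' $\psi=P_{t_0}^{B}\one_{B^{\circ}}$, using the spectral inequality $\mathcal{E}(P_sf,P_sf)\le\|f\|^2/(2s)$ for the energy bound, an HKE-driven pointwise lower bound $\psi\ge c_0$ on an inner ball, and the normal contraction $\phi=(\psi/c_0)\wedge 1$. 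This is a sound alternative route that avoids leaning on the (somewhat implicit) ``quantitative'' nature of the cited theorem, at the cost of a longer argument.

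Two points to tighten. First, a small but genuine gap: you run the Dirichlet semigroup on the full ball $B=B_n(x,r)$, which only yields $\supp(\phi)\subset\overline{B_n(x,r)}$, whereas the lemma asks for $\supp(\phi)\subset B_n(x,r)$ (the open ball). The fix is immediate: take the Dirichlet form on a slightly smaller concentric ball, say $B_n(x,r/2)$ (with $\kappa<1/2$), so that $\supp(\psi)\subset\overline{B_n(x,r/2)}\subset B_n(x,r)$; the factor $1/2$ is absorbed by the doubling constants in \ref{lb.as2} and \ref{lb.as4}, so the other estimates are unchanged. Second, for the step you rightly flag as the hard one (the uniform lower bound on $\psi$), the exit-time/strong-Markov estimate does work -- the small-time blowup of the prefactor $V_n(\Psi_n^{-1}(s))^{-1}$ is killed by the exponential factor at distance $\ge(1-\sigma)r$, uniformly once $\sigma$ is fixed small -- but there is a cleaner route that the paper itself uses elsewhere (see the proof of Theorem \ref{t.qehke}): \cite[Theorem 3.1]{BGK12} shows that $\HKE(\Psi_n)$ is quantitatively equivalent to the local lower estimate $\mathrm{LLE}(\Psi_n)$, which provides a near-diagonal lower bound for the Dirichlet kernel directly, with constants uniform in $n$, and so bypasses the exit-probability argument entirely.
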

\begin{proof}
As the proof of \cite[Theorem 1.2]{GHL15} is quantitative with the constant, we see that the capacity condition $(\text{cap})_{\Psi}$ in \cite{GHL15} holds for all $(\mathcal{E}_{n},\mathcal{F}_{n})$ with the same constant $\kappa$ and $C$, which implies \eqref{e.cap<}.
\end{proof}
\begin{proposition}\label{p.weyl}
There exists a constant $C_{1}\geq1$, independent of $n$ and $R$, such that 

\begin{equation}\label{e.weyl}
\lambda_{n,j}^{(R)}\leq C_{1}\Psi_{n}\left(\frac{R\wedge \diam(X_{n},d_{n})}{j}\right)^{-1},\ {\forall R\in(0,\infty)},\ 1\leq n,j<\infty.
\end{equation}Therefore \[\frac{R\wedge \diam(X_{n},d_{n})}{j}\lesssim\Psi_{n}^{-1}((\lambda_{n,j}^{(R)})^{-1}), \ {\forall R\in(0,\infty)},\ 1\leq n,j<\infty.\]
\end{proposition}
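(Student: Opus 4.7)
The strategy is to apply the Courant--Fischer--Weyl min-max characterization
\[\lambda_{n,j}^{(R)} = \inf_{\substack{V \subset \mathcal{F}_n^R \\ \dim V = j}}\ \sup_{u \in V \setminus \{0\}}\frac{\mathcal{E}_n(u,u)}{\lVert u\rVert_{L^2(B_n(p_n,R),m_n)}^2},\]
which is legitimate since the generator of $(\mathcal{E}_n^R,\mathcal{F}_n^R)$ has discrete spectrum. It thus suffices to exhibit a single $j$-dimensional test subspace of $\mathcal{F}_n^R$ on which the Rayleigh quotient is bounded by $C_1/\Psi_n(R/j)$ uniformly in $n,j,R$. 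The subspace will be spanned by the capacity cut-offs from Lemma \ref{l.cap} anchored on $j$ pairwise disjoint balls of radius $r \asymp R/j$ inside $B_n(p_n,R)$.

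To construct the balls, set $r := R/(4j)$ and assume first that $R \leq \diam(X_n,d_n)$. Since $(X_n,d_n)$ is a proper length space (hence geodesic by \cite[Lemma 8.3.11]{HKST15}), I pick a unit-speed geodesic $\gamma$ emanating from $p_n$ and define $x_i := \gamma((2i-1)r)$ for $i=1,\ldots,j$. Then $d_n(x_i,x_k) \geq 2r$ for $i \neq k$ and $d_n(p_n,x_i) + r \leq 2jr = R/2 < R$, so the balls $B_n(x_i,r)$ are pairwise disjoint and compactly contained in $B_n(p_n,R)$. Apply Lemma \ref{l.cap} on each $B_n(x_i,r)$ to obtain continuous cut-offs $\phi_i \in \mathcal{F}_n$ with $\supp(\phi_i) \subset B_n(x_i,r)$, $\phi_i \equiv 1$ on a neighborhood of $B_n(x_i,\kappa_{\mathrm{cap}}r)$, and $\mathcal{E}_n(\phi_i,\phi_i) \leq C_{\mathrm{cap}} V_n(r)/\Psi_n(r)$.

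Pairwise disjointness of the compact supports inside $B_n(p_n,R)$ implies that $V := \mathrm{span}(\phi_1,\ldots,\phi_j)$ is a $j$-dimensional subspace of $\mathcal{F}_n^R$ and, via strong locality, that the cross terms $\mathcal{E}_n(\phi_i,\phi_k)$ vanish for $i \neq k$. Expanding $u = \sum_i c_i\phi_i$ therefore gives
\[\mathcal{E}_n(u,u) \leq \frac{C_{\mathrm{cap}} V_n(r)}{\Psi_n(r)}\sum_{i} c_i^2,\qquad \lVert u\rVert_{L^2}^2 \geq \sum_{i} c_i^2\, m_n\!\bigl(B_n(x_i,\kappa_{\mathrm{cap}}r)\bigr) \gtrsim V_n(r)\sum_{i} c_i^2,\]
where the last step combines \eqref{e.uesm} with the polynomial lower bound on $V_n$ from \ref{lb.as4}. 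The Rayleigh quotient on $V$ is thus $\lesssim 1/\Psi_n(R/(4j))$, and invoking \eqref{e.psi} at exponent $\beta'$ converts this into $\leq C_1/\Psi_n(R/j)$ for a uniform $C_1$; the second assertion follows by inverting and applying \eqref{e.psi} once more.

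The point requiring the most care is the edge regime $R > \diam(X_n,d_n)$, where the geodesic placement of the $x_i$'s is unavailable; there one reruns the construction at the maximal scale $r' \asymp \diam(X_n,d_n)/j$ and compensates by the monotonicity $R \mapsto \lambda_{n,j}^{(R)}$ together with the polynomial control \eqref{e.psi}. Otherwise the argument is a standard packing-plus-capacity Rayleigh-quotient estimate whose only nontrivial input is the uniform cut-off bound of Lemma \ref{l.cap}.
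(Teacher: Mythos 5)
Your main argument is essentially the paper's proof: both apply the Courant--Fischer min-max principle to the $j$-dimensional span of the capacity cut-offs from Lemma~\ref{l.cap}, supported in $j$ disjoint balls of radius $r\asymp R/j$ strung along a geodesic inside $B_n(p_n,R)$, and then bound the Rayleigh quotient by $C_{\mathrm{cap}}V_n(r)/(\Psi_n(r)V_n(\kappa_{\mathrm{cap}}r))\lesssim 1/\Psi_n(R/j)$. The paper takes $r=R/(100j)$ and a geodesic joining two points of distance $>R/2$ rather than one emanating from $p_n$; these are inessential choices, and in fact your placement makes the containment $B_n(x_i,r)\subset B_n(p_n,R)$ transparent. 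You also spell out the vanishing of the cross terms $\mathcal{E}_n(\phi_i,\phi_k)$ via strong locality, a step the paper uses only implicitly.

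The edge-case paragraph, however, does not hold up. Both $R\mapsto\lambda_{n,j}^{(R)}$ and $R\mapsto 1/\Psi_n(R/j)$ are non-increasing, so domain monotonicity gives $\lambda_{n,j}^{(R)}\le\lambda_{n,j}^{(D)}\lesssim 1/\Psi_n(D/j)$ for $R>D:=\diam(X_n,d_n)$; but $1/\Psi_n(D/j)\ge 1/\Psi_n(R/j)$, which is the wrong direction, and invoking \eqref{e.psi} only worsens matters by a factor $(R/D)^{\beta'}$. In fact, for bounded $X_n$ and $j\ge 2$ the claimed inequality fails outright once $R$ is large: then $B_n(p_n,R)=X_n$, so $\lambda_{n,j}^{(R)}$ is frozen at a positive constant, while $1/\Psi_n(R/j)\to 0$. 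You are right that this regime needs care --- the paper's own proof tacitly assumes two points at distance $>R/2$ exist, hence breaks down there too --- but the monotonicity patch cannot repair the statement as written; one must instead restrict the range of $R$ (what the later sections actually extract is only boundedness of $\{\lambda_{n,j}^{(R)}\}_n$ for each fixed $j$ and $R$, which does survive).
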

\begin{proof}
We use the max-min principle \cite[Exercise 12.4.2]{Sch12}.

Let $R\in(0,\frac{3}{2}\diam(X_{n},d_{n}))$. Given an integer $j$ sufficiently large, we can find a disjoint family of $j$ balls $B_{n}(x_{i},r)\subset B_{n}(p_{n},R)$, where $r=R/(100j)$. In fact, since $\frac{R}{3}<\frac{1}{2}\diam(X_{n},d_{n})$ and $\frac{1}{2}\diam(X_{n},d_{n}) \leq\sup_{q \in X_{n}}d_{n}(p_{n},q)\leq \diam(X_{n},d_{n})$ by triangle inequality, there exists a point $q\in X_{n}$ such that $d_{n}(p_{n},q)\geq R/3$. Choose one of the geodesics that connect $p_{n}$ and $q$. We can pick points along this geodesic such that each two points have a distance greater than $R/(50j)$, then choose the balls centered at these points with radius $r=R/(200j)$. Applying Lemma \ref{l.cap} on these balls, there exists $\phi_{i}$, $1\leq i\leq j$ such that \[\mathcal{E}_{n}^{(R)}(\phi_{i},\phi_{i})\lesssim\frac{V_{n}(r)}{\Psi_{n}(r)}.\]
Thus by the variational characterization of the $j$-th eigenvalue $\lambda_{n,j}^{(R)}$, we see that  such that for $\phi=\sum_{i=1}^{j}a_{i}\phi_{i}\in\mathrm{span}(\phi_{i},1\leq i\leq j)$ where $(a_{1},\ldots,a_{j})\in\mathbb{R}^{j}\setminus\{0\}$,
\[\begin{aligned}
\frac{\mathcal{E}^{(R)}_{n}(\phi,\phi)}{\lVert \phi\rVert_{L^{2}(B_{n}(p_{n},R))}^{2}}{=}\frac{\sum_{i=1}^{j}a_{i}^{2}\mathcal{E}^{(R)}_{n}(\phi_{i},\phi_{i})}{\sum_{i=1}^{j}a_{i}^{2}m_{n}(B_{n}(x_{i},\kappa_{\mathrm{cap}} r))}\lesssim\frac{V_{n}(r)/\Psi_{n}(r)}{V_{n}(\kappa_{\mathrm{cap}}r)}\lesssim \frac{1}{\Psi_{n}(R/j)}
\end{aligned}\]
where the locality of the Dirichlet form is used in the first equality. Taking supremum over all $(a_{1},\ldots,a_{j})\in\mathbb{R}^{j}\setminus\{0\}$, we see by the max-min principle that \[\lambda_{n,j}^{(R)}=\inf_{H\subset \mathcal{F}_{n}: \dim H=j}\sup_{\phi\in H}\frac{\mathcal{E}^{(R)}_{n}(\phi,\phi)}{\lVert \phi\rVert_{L^{2}(B_{n}(p_{n},R))}^{2}}\leq \sup_{\phi\in\mathrm{span}(\phi_{i},1\leq i\leq j)}\frac{\mathcal{E}^{(R)}_{n}(\phi,\phi)}{\lVert \phi\rVert_{L^{2}(B_{n}(p_{n},R))}^{2}}\lesssim\frac{1}{\Psi_{n}(R/j)}.\]

If $\diam(X_{n},d_{n})<\infty$ and if $R\in[\frac{3}{2}\diam(X_{n},d_{n}),\infty)$, then $B_{n}(p_{n},R)=X_{n}=B_{n}(p_{n},\frac{5}{4}\diam(X_{n},d_{n}))$, therefore $\lambda_{n,j}^{(R)}=\lambda_{n,j}^{(\frac{5}{4}\diam(X_{n},d_{n}))}\lesssim \Psi_{n}(\diam(X_{n},d_{n})/j)^{-1}$ by the discussion in the above paragraph. Therefore we conclude that \eqref{e.weyl} holds for all $R\in(0,\infty)$.
\end{proof}
\begin{remark}
If the metric space $(X_{n},d_{n})$ is bounded, that is, $\diam(X_{n},d_{n})<\infty$, and if $R>\diam(X_{n},d_{n})$, then $\lambda_{n,1}^{(R)}=0$, so the lower bound of $\lambda_{n,j}^{(R)}$ in \eqref{e.weyl} cannot be expected for $R$ very large. On the other hand, by the Faber--Krahn inequality (see \cite{GHL15} for a definition), there exists $C_{2},C_{3}>1$ such that $\lambda_{n,1}^{(R)}\geq {C_{3}^{-1}(\Psi_{n}(R))^{-1}}$ for all $0<R<C^{-1}_{2}\diam(X_{n},d_{n})$.
\end{remark}

\begin{remark}
If $\Psi(r)=r^{\beta}$ for some $\beta>1$ and if the space is $\alpha$-regular, namely $V(r)\asymp r^{\alpha}$, then \eqref{e.weyl} gives $\lambda_{j}\lesssim j^{\beta}$, which is weaker than the classic Weyl's law $\lambda_{j}\lesssim j^{\beta/\alpha}$ as $\alpha\geq1$ and $2\leq \beta\leq\alpha+1$ whenever the full heat kernel estimate $\hyperlink{HKEf}{\mathrm{HKE}_{\mathrm{f}}}$ holds, see \cite[Theorem 3.20]{Bar98}, or \cite[Theorem 4.8 and Corollary 3.5]{GHL03}, or a recent result \cite[Theorem 2.1]{Mur25}.
\end{remark}

We also have the following $L^{\infty}$ estimates of eigenfunctions.
\begin{proposition}
   There exists a constant $C>0$, such that for each $n,j\geq1$,
    \begin{equation}\label{e.eigfunbd}
      \left\lVert \phi_{n,j}^{(R)}\right\rVert_{L^{\infty}\left(B_{n}(p_{n},R)\right)}\leq C\inf_{t\in(0,\infty)}\frac{e^{\lambda_{n,j}^{(R)}t}}{V_{n}(\Psi_{n}^{-1}(t)\wedge \diam(X_{n},d_{n}))^{1/2}}.
    \end{equation}
\end{proposition}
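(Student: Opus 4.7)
The plan is to combine the eigenfunction--semigroup identity with the on-diagonal heat kernel upper bound. Since $\phi_{n,j}^{(R)}$ is an eigenfunction of the generator of $(\mathcal{E}_n^R, \mathcal{F}_n^R)$ with eigenvalue $\lambda_{n,j}^{(R)}$, the associated semigroup satisfies $P_t^{(n,R)} \phi_{n,j}^{(R)} = e^{-\lambda_{n,j}^{(R)} t} \phi_{n,j}^{(R)}$ for every $t > 0$. Using the heat kernel representation of $P_t^{(n,R)}$, Cauchy--Schwarz, the normalization $\lVert\phi_{n,j}^{(R)}\rVert_{L^2}=1$, and the Chapman--Kolmogorov identity $\int p^{(n,R)}(t,x,y)^2\,dm_n(y) = p^{(n,R)}(2t,x,x)$, I obtain for $m_n$-a.e.\ $x \in B_n(p_n,R)$ and every $t>0$,
\[
\bigl|\phi_{n,j}^{(R)}(x)\bigr|^2 \;\leq\; e^{2\lambda_{n,j}^{(R)} t}\,p^{(n,R)}(2t,x,x).
\]

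Next I would invoke the standard domination of the Dirichlet heat kernel by the global one, $p^{(n,R)}(s,x,y) \leq p_s^{(n)}(x,y)$, which follows immediately from the killed-process interpretation of the part form. Combined with the upper bound in $\HKE(\Psi_n)$ from \ref{lb.as1} this gives $p^{(n,R)}(2t,x,x) \leq C_1/V_n(x,\Psi_n^{-1}(2t))$ for $m_n$-a.e.\ $x$. I then use \eqref{e.uesm} to replace $V_n(x,\Psi_n^{-1}(2t))$ by $V_n(\Psi_n^{-1}(2t))$, and \eqref{e.psi} applied to $\Psi_n^{-1}$ to absorb the factor of $2$, so that $V_n(\Psi_n^{-1}(2t)) \asymp V_n(\Psi_n^{-1}(t))$ with a constant depending only on $C_0,C_{\mathrm{v}},\beta,\beta'$, and hence independent of $n$. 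Combining,
\[
\bigl|\phi_{n,j}^{(R)}(x)\bigr|^2 \;\leq\; C\,\frac{e^{2\lambda_{n,j}^{(R)} t}}{V_n(\Psi_n^{-1}(t))},
\]
and taking the square root followed by the infimum over $t \in (0,\infty)$ yields the desired bound a.e.

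To upgrade the a.e.\ bound to an $L^\infty$ bound, I would use the continuity of $\phi_{n,j}^{(R)}$: since $p^{(n,R)}(t,\cdot,\cdot)$ is continuous (as recorded in the text via \cite{BGK12} and the volume doubling), the identity $\phi_{n,j}^{(R)}(x) = e^{\lambda_{n,j}^{(R)}t}\int p^{(n,R)}(t,x,y)\phi_{n,j}^{(R)}(y)\,dm_n(y)$ exhibits a continuous representative, and the pointwise bound holds for every $x \in B_n(p_n,R)$.

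The argument is otherwise standard; the only point that requires care is making sure the implicit constant $C$ is uniform in $n$ (and in $R$). This is ensured by the uniformity built into assumptions \ref{lb.as1}, \ref{lb.as2} and \ref{lb.as4}, all of which provide constants independent of $n$ for the doubling-type comparisons $V_n(\Psi_n^{-1}(2t)) \asymp V_n(\Psi_n^{-1}(t))$ and for the heat kernel upper bound.
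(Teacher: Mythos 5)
Your proposal is correct and follows essentially the same route as the paper's proof: the semigroup identity for the eigenfunction, Cauchy--Schwarz plus Chapman--Kolmogorov to reduce to $p^{(n,R)}(2t,x,x)^{1/2}$, domination by the global heat kernel, the on-diagonal upper bound from $\HKE(\Psi_n)$ together with \eqref{e.uesm}, and then the infimum over $t$. One small simplification: to pass from $V_n(\Psi_n^{-1}(2t))$ to $V_n(\Psi_n^{-1}(t))$ you do not even need the doubling-type assumptions \eqref{e.psi}, \ref{lb.as4}; since $\Psi_n^{-1}$ and $V_n$ are both increasing, $V_n(\Psi_n^{-1}(2t)) \geq V_n(\Psi_n^{-1}(t))$ holds outright, which is the direction needed. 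Your extra remark on upgrading the a.e.\ bound to a pointwise one via continuity of $p^{(n,R)}(t,\cdot,\cdot)$ is a legitimate point of care that the paper leaves implicit.
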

\begin{proof}
    Since $\phi_{n,j}^{(R)}$ is also the eigenfunction of the bounded operator $P^{(n,R)}_{t}$ with eigenvalue $\exp(-\lambda_{n,j}^{(R)}t)$, we have by the Cauchy--Schwarz inequality that \begin{align*}
        \left|\phi_{n,j}^{(R)}(x)\right|&=\left|e^{\lambda_{n,j}^{(R)}t}P^{(n,R)}_{t}\phi_{n,j}^{(R)}(x)\right|=\left|e^{\lambda_{n,j}^{(R)}t}\int_{B_{n}(p_{n},R)}p^{(n,R)}(t,x,y)\phi_{n,j}^{(R)}(y)\dif m_{n}(y)\right|\\&\leq {e^{\lambda_{n,j}^{(R)}t}} p^{(n,R)}(2t,x,x)^{1/2}\left\lVert \phi_{n,j}^{(R)}\right\rVert_{L^{2}\left(B_{n}(p_{n},R)\right)}\\
        &\lesssim \frac{e^{\lambda_{n,j}^{(R)}t}}{V_{n}(\Psi_{n}^{-1}(2t)\wedge \diam(X_{n},d_{n}))^{1/2}}\lesssim \frac{e^{\lambda_{n,j}^{(R)}t}}{V_{n}(\Psi_{n}^{-1}(t){\wedge \diam(X_{n},d_{n})})^{1/2}}.
    \end{align*}
By taking the infimum over $t\in(0,\infty)$ we obtain \eqref{e.eigfunbd}.
\end{proof}

We need the H\"older estimates of the heat kernel and eigenfunctions to pass to the limit. To formulate it, we first define a metric on $X_{n}\times X_{n}$, with some abuse of notation, by \[d_{n}\left((x_{1},y_{1}),(x_{2},y_{2})\right):=\max(d_{n}(x_{1},x_{2}),d_{n}(y_{1},y_{2})).\] Under this metric, the ball in $X_{\infty}\times X_{\infty}$ centered at $(x,y)$ with radius $r$ equals $B_{\infty}(x,r)\times B_{\infty}(y,r)$, which is totally bounded.
\begin{proposition}\label{p.holder}
    For all $n\geq1$, $j\geq1$, $R>0$ and $t>0$, the Dirichlet heat kernels $p^{(n,R)}(t,\cdot,\cdot): B_{n}(p_{n},R)\times B_{n}(p_{n},R)\rightarrow[0,\infty)$ and $\phi^{(R)}_{n,j}:B_{n}(p_{n},R)\rightarrow\mathbb{R}$ are H\"older continuous. To be precise, there exist constants $C$ and $\Theta$, such that
\begin{enumerate}[label=\textup{(\arabic*)},align=right,leftmargin=*,topsep=5pt,parsep=0pt,itemsep=2pt]        \item\label{it.phder1} for all $n\geq1$ and $x_{1},x_{2},y_{1},y_{2}\in B _{n}(p_{n},R)$,
    \begin{equation}\label{e.hold1}
    \left|p^{(n,R)}(t,x_{1},y_{1})-p^{(n,R)}(t,x_{2},y_{2})\right|\leq \frac{Cd_{n}((x_{1},y_{1}),(x_{2},y_{2}))^{\Theta}}{(\Psi_{n}^{-1}(t){\wedge \diam(X_{n},d_{n})})^{\Theta}V_{n}(\Psi_{n}^{-1}(t){\wedge \diam(X_{n},d_{n})})}.
    \end{equation}
    \item\label{it.phder2} for all $n\geq1$ and $x,y\in B _{n}(p_{n},R)$,
     \begin{equation}\label{e.hold2}
        \left|\phi^{(R)}_{n,j}(x)-\phi^{(R)}_{n,j}(y)\right|\leq C\frac{V_{n}({R\wedge \diam(X_{n},d_{n})})^{1/2}}{V_{n}({(R\wedge \diam(X_{n},d_{n}))}/j)}\left({\frac{j}{{R\wedge \diam(X_{n},d_{n})}}}\right)^{\Theta}d_{n}(x,y)^{\Theta}.
    \end{equation}
    \end{enumerate}
\end{proposition}
\begin{proof}
\begin{enumerate}[label=\textup{(\arabic*)},align=right,leftmargin=*,topsep=5pt,parsep=0pt,itemsep=2pt]
    \item[\ref{it.phder1}] Under the heat kernel estimate $\hyperlink{HKE}{\mathrm{HKE}(\Psi)}$ in Definition~\ref{d:HKE}, the \emph{elliptic Harnack inequality} $\mathrm{(H)}$ follows from the proof of \cite[Theorem~7.4]{GT12}, noting that the unboundedness of the space is not used in deriving $\mathrm{(H)}$. Moreover, the \emph{mean exit time estimate} $\mathrm{(E_{\Psi})}$ (see \cite[Definition~3.10]{GT12}) holds for all $r \in (0, \diam(X,d)/A)$, where $A>1$ depends only on the constants appearing in $\hyperlink{HKE}{\mathrm{HKE}(\Psi)}$. This can be proved by the same argument as in \cite[Theorem~9.4]{GHL15} (see also \cite[p.~725]{Lie15}, \cite[Remark~4.16]{GHH23} and \cite[Lemma~12.2]{GHH24} for the proof of the mean exit time upper bound). According to the proof of \cite[Lemma 5.10]{GT12} (see also \cite[Theorem 3.1 and Corollary 4.2]{BGK12}), there exists a constant $\Theta$ that depends only on the constants appearing in the two-sided heat kernel estimates, such that for all $(x_{1},y_{1})$ and $(x_{2},y_{2})$ belong to $B_{n}(p_{n},R)\times B_{n}(p_{n},R)$, if $d_{n}((x_{1},y_{1}),(x_{2},y_{2}))\leq \Psi_{n}^{-1}(t)$, there holds
\[\begin{aligned}
& \left|p^{(n,R)}(t,x_{1},y_{1})-p^{(n,R)}(t,x_{2},y_{2})\right|\\ \leq\ &\left|p^{(n,R)}(t,x_{1},y_{1})-p^{(n,R)}(t,x_{1},y_{2})\right|+\left|p^{(n,R)}(t,x_{1},y_{2})-p^{(n,R)}(t,x_{2},y_{2})\right|\\
\lesssim\ &\frac{1}{(\Psi_{n}^{-1}(t){\wedge \diam(X_{n},d_{n})})^{\Theta}V_{n}(\Psi_{n}^{-1}(t){\wedge \diam(X_{n},d_{n})})}\left(d_{n}(x_{1},x_{2})^{\Theta}+d_{n}(y_{1},y_{2})^{\Theta}\right)\\
\lesssim\ &\frac{1}{(\Psi_{n}^{-1}(t){\wedge \diam(X_{n},d_{n})})^{\Theta}V_{n}(\Psi_{n}^{-1}(t){\wedge \diam(X_{n},d_{n})})} d_{n}((x_{1},y_{1}),(x_{2},y_{2}))^{\Theta}.
\end{aligned}\]
We also have, by the on-diagonal estimates of heat kernel, that if $d_{n}((x_{1},y_{1}),(x_{2},y_{2}))\geq\Psi_{n}^{-1}(t)$, then $\Psi_{n}^{-1}(t)\leq \diam(X_{n},d_{n})$ and \begin{align}
\left|p^{(n,R)}(t,x_{1},y_{1})-p^{(n,R)}(t,x_{2},y_{2})\right|&\lesssim\frac{1}{V_{n}(\Psi_{n}^{-1}(t))}=\frac{1}{(\Psi_{n}^{-1}(t))^{\Theta}V_{n}(\Psi_{n}^{-1}(t))}\cdot (\Psi_{n}^{-1}(t))^{\Theta} \\ & \lesssim \frac{1}{(\Psi_{n}^{-1}(t))^{\Theta}V_{n}(\Psi_{n}^{-1}(t))} d_{n}((x_{1},y_{1}),(x_{2},y_{2}))^{\Theta}.
\end{align}
Combining these two inequalities, we see that for all $n\geq1$ and $x_{1},x_{2},y_{1},y_{2}\in B _{n}(p_{n},R)$, the estimate \eqref{e.hold1} holds.
\item[\ref{it.phder2}] Using \eqref{e.hold1} and the fact that $\phi_{n,j}^{(R)}$ is also a eigenfunction of the operator $P^{(n,R)}_{t}$ with eigenvalue $\exp(-\lambda_{n,j}^{(R)}t)$, we have that for all $t>0$,
\begin{align*}
    &\phantom{\ \leq}\left|\phi_{n,j}^{(R)}(x)-\phi_{n,j}^{(R)}(y)\right|\\
    &=\left|e^{\lambda_{n,j}^{(R)}t}\int_{B_{n}(p_{n},R)}\left(p^{(n,R)}(t,x,z)-p^{(n,R)}(t,y,z)\right)\phi_{n,j}^{(R)}(z)\dif m_{n}(z)\right|\\
    &\lesssim  \frac{e^{\lambda_{n,j}^{(R)}t}d_{n}(x,y)^{\Theta}m_{n}(B_{n}(p_{n},R))^{1/2}}{(\Psi_{n}^{-1}(t){\wedge \diam(X_{n},d_{n})})^{\Theta}V_{n}(\Psi_{n}^{-1}(t){\wedge \diam(X_{n},d_{n})})}\left\lVert \phi_{n,j}^{(R)}\right\rVert_{L^{2}\left(B_{n}(p_{n},R)\right)}\\ &\lesssim\frac{e^{\lambda_{n,j}^{(R)}t}d_{n}(x,y)^{\Theta} }{(\Psi_{n}^{-1}(t){\wedge \diam(X_{n},d_{n})})^{\Theta}V_{n}(\Psi_{n}^{-1}(t){\wedge \diam(X_{n},d_{n})})} V_{n}({R\wedge \diam(X_{n},d_{n})})^{1/2}\\
   &\overset{\eqref{e.weyl}}{\lesssim}\frac{{\exp({C\Psi_{n}((R\wedge \diam(X_{n},d_{n}))/j)^{-1}t})}d_{n}(x,y)^{\Theta}}{(\Psi_{n}^{-1}(t){\wedge \diam(X_{n},d_{n})})^{\Theta}V_{n}(\Psi_{n}^{-1}(t){\wedge \diam(X_{n},d_{n})})}  V_{n}({R\wedge \diam(X_{n},d_{n})})^{1/2}
\end{align*}
By choosing $t={\Psi_{n}((R\wedge \diam(X_{n},d_{n}))/j)}$, we complete the proof of \eqref{e.hold2}.
\end{enumerate}

\end{proof}

\section{Heat kernel on limit space}\label{s.heatlim}
In this section, we construct the heat kernel on the balls of limit spaces. We first need the following Arzel\`{a}--Ascoli type theorem for (possibly) discontinuous functions.
\begin{lemma}[{\cite[Lemma D.1]{Kig23}}]\label{l.a-a}
    Let $(X,d)$ be a totally bounded metric space. Let $u_{n}: X \rightarrow \mathbb{R}$ for any $n\in\mathbb{N}$. Assume that there exists a nondecreasing function $\eta:[0, \infty) \rightarrow [0,\infty)$ and a sequence $\{\delta_{n}\}_{n\in\mathbb{N}}$ of non-negative numbers such that $\lim_{t\downarrow0} \eta(t) = 0$, $\lim_{n\rightarrow\infty} \delta_{n} = 0$, $\sup_{n\in\mathbb{N},x\in X} |u_{n}(x)| < \infty$ and \[|u_{n}(x)-u_{n}(y)|\leq \eta(d(x,y))+\delta_{n}\quad\text{for all }x,y\in X\text{ and }n\in\mathbb{N}.\]
    Then there exists a subsequence $\{u_{n_{k}}\}_{k\in\mathbb{N}}$ and $u \in C(X)$ with \[|u(x)-u(y)|\leq\eta(d(x,y))\quad\text{for all }x,y\in X,\]
    such that $\sup_{x\in X}|u_{n_{k}}(x)-u(x)|\rightarrow0$ as $k\rightarrow\infty$.
\end{lemma}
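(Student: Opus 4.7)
The plan is to follow the classical Arzelà--Ascoli strategy, modified to absorb the approximation error $\delta_n$ and to account for the fact that the $u_n$ themselves need not be continuous. First I would exploit that every totally bounded metric space is separable to fix a countable dense subset $D=\{x_i\}_{i\geq 1}\subset X$. The uniform bound $M:=\sup_{n,x}|u_n(x)|<\infty$ confines each sequence $\{u_n(x_i)\}_n$ to the compact interval $[-M,M]$, so a standard diagonal extraction produces a subsequence $\{u_{n_k}\}$ along which the pointwise limit $\tilde u(x_i):=\lim_k u_{n_k}(x_i)$ exists for every $x_i\in D$.

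Next I would pass to the limit in the hypothesis along this subsequence: for any $x_i,x_j\in D$,
\begin{equation*}
|\tilde u(x_i)-\tilde u(x_j)|=\lim_{k\to\infty}|u_{n_k}(x_i)-u_{n_k}(x_j)|\leq \lim_{k\to\infty}\bigl(\eta(d(x_i,x_j))+\delta_{n_k}\bigr)=\eta(d(x_i,x_j)),
\end{equation*}
using $\delta_{n_k}\to 0$. Since $\eta(t)\to 0$ as $t\downarrow 0$, $\tilde u$ is uniformly continuous on $D$; because $\mathbb{R}$ is complete and $D$ is dense in $X$, $\tilde u$ extends uniquely to a function $u\colon X\to\mathbb{R}$. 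Taking limits along dense approximating sequences preserves the inequality, yielding $|u(x)-u(y)|\leq \eta(d(x,y))$ for all $x,y\in X$; this simultaneously gives $u\in C(X)$ and the claimed modulus of continuity.

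The final and main step is to upgrade pointwise convergence on $D$ to uniform convergence on $X$. Given $\varepsilon>0$, I would pick $\tau>0$ with $\eta(\tau)<\varepsilon/4$ and, using total boundedness, a finite $\tau$-net $\{y_1,\dots,y_N\}\subset D$. Pointwise convergence at these finitely many points together with $\delta_{n_k}\to 0$ supplies $K\in\mathbb{N}$ so that for all $k\geq K$ one has $\max_{1\leq j\leq N}|u_{n_k}(y_j)-u(y_j)|<\varepsilon/4$ and $\delta_{n_k}<\varepsilon/4$. For arbitrary $x\in X$, choose $y_j$ with $d(x,y_j)<\tau$ and split
\begin{equation*}
|u_{n_k}(x)-u(x)|\leq |u_{n_k}(x)-u_{n_k}(y_j)|+|u_{n_k}(y_j)-u(y_j)|+|u(y_j)-u(x)|,
\end{equation*}
bounding the three terms by $\eta(\tau)+\delta_{n_k}$, $\varepsilon/4$, and $\eta(\tau)$ respectively, for a total strictly less than $\varepsilon$. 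This estimate is uniform in $x$, so $\sup_{x\in X}|u_{n_k}(x)-u(x)|\to 0$.

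The statement requires no continuity of the $u_n$, which is convenient because the hypothesis is phrased on pairs of values rather than on a modulus of continuity per function; thus discontinuity never enters the argument. The only genuine subtlety, and the item I would be most careful about, is carrying the error term $\delta_{n_k}$ cleanly through both stages: it vanishes in the limit during the diagonal extraction (producing the sharp $\eta$-Lipschitz estimate for $u$ without the $\delta$), and it is absorbed into the $\varepsilon$ budget during the uniform convergence argument by choosing $K$ large enough.
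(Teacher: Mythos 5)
The paper does not prove this lemma itself---it cites \cite[Lemma D.1]{Kig23}---so there is no in-paper argument to compare against. Your overall approach (diagonal extraction on a countable dense set $D$, extension by uniform continuity, finite $\tau$-net for uniform convergence) is the natural one and in the right spirit, but the step where you pass the modulus of continuity from $D$ to $X$ has a genuine gap.

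You claim that taking limits along dense approximating sequences gives $|u(x)-u(y)|\le\eta(d(x,y))$ for all $x,y\in X$. However, $\eta$ is only assumed nondecreasing and need not be right-continuous. If $x_i\to x$ and $x_j\to y$ with $x_i,x_j\in D$, the distances $d(x_i,x_j)\to d(x,y)$ may converge only from above, and all the $D$-estimate gives is $|u(x)-u(y)|\le\lim_{s\downarrow d(x,y)}\eta(s)$, which can strictly exceed $\eta(d(x,y))$ at a jump of $\eta$. For a concrete obstruction take $X=\{0\}\cup[1,2]$ with the Euclidean metric, $D=\{0\}\cup\bigl(([1,2]\cap\mathbb{Q})\setminus\{1\}\bigr)$, and $\eta(t)=0$ for $t\le 1$, $\eta(t)=1$ for $t>1$: every $D$-pair approximating $(0,1)$ lies at distance $>1$, so the density argument yields only the useless bound $|u(0)-u(1)|\le 1$ rather than $\le\eta(1)=0$. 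Since your uniform-convergence step then invokes $|u(y_j)-u(x)|\le\eta(\tau)$, this unjustified claim propagates. The repair is to reverse the order: from the $D$-estimate deduce only that $u$ is uniformly continuous (and choose $\tau$ so that both $\eta(\tau)<\varepsilon/4$ and $|u(a)-u(b)|<\varepsilon/4$ whenever $d(a,b)<\tau$), run the finite-net argument to obtain $\sup_{x\in X}|u_{n_k}(x)-u(x)|\to 0$, and only then read off the sharp modulus for arbitrary $x,y\in X$ directly from the hypothesis, which is stated on all of $X$ rather than just $D$: $|u(x)-u(y)|=\lim_{k}|u_{n_k}(x)-u_{n_k}(y)|\le\lim_{k}\bigl(\eta(d(x,y))+\delta_{n_k}\bigr)=\eta(d(x,y))$.
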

\begin{definition}
    For any fixed $n\geq1$, $R>0$, define the family of approximate heat kernels as functions on $(0,\infty)\times B_{\infty}(p_{\infty},R)\times B_{\infty}(p_{\infty},R)$ by \[q^{(n,R)}(t,x,y):=p^{(n,R)}(t,g_{n}(x),g_{n}(y)), \quad x,y\in B_{\infty}(p_{\infty},R),\ t>0.\]
    In the same way, we define the family of approximate $j$-th eigenfunctions as functions on $B_{\infty}(p_{\infty},R)$ by \[\varphi_{n,j}^{(R)}(x):=\phi_{n,j}^{(R)}(g_{n}(x)),\quad x\in B_{\infty}(p_{\infty},R).\]
\end{definition}

\begin{proposition}\label{p.rational}
Fix $R>0$. There exist positive numbers $\{\lambda^{(R)}_{j}\}_{j=1}^{\infty}$, H\"older continuous functions $\{\varphi^{(R)}_{j}\}_{j=1}^{\infty}$ on $B_{\infty}(p_{\infty},R)$, H\"older continuous functions $\{\widetilde{q}^{(R)}(t,\cdot,\cdot)\}_{t\in\mathbb{Q}_{+}}$ on $B_{\infty}(p_{\infty},R)\times B_{\infty}(p_{\infty},R)$, and a common subsequence $\{n_{k}\}_{k\in\mathbb{N}}$ such that 
\begin{enumerate}[label=\textup{(\arabic*)},align=right,leftmargin=*,topsep=5pt,parsep=0pt,itemsep=2pt]
    \item\label{lb.ra1} for each $t\in\mathbb{Q}_{+}$, \begin{equation}\label{e.ration1}
    \begin{aligned}
    	&\sup_{x,y\in B_{\infty}(p_{\infty},R)}|\widetilde{q}^{(R)}(t,x,y)-q^{(n_{k},R)}(t,x,y)|\rightarrow0\text{ as }k\rightarrow\infty,\\
    	&{\left|\widetilde{q}^{(R)}(t,x_{1},y_{1})-\widetilde{q}^{(R)}(t,x_{2},y_{2})\right|\lesssim  \frac{1}{(t^{1/\beta^{\prime}}{\wedge \ell})^{\Theta}V_{l}(t^{1/\beta^{\prime}}{\wedge \ell})}d_{\infty}((x_{1},y_{1}),(x_{2},y_{2}))^{\Theta}.}
    \end{aligned}
         \end{equation}
        { where $\Theta$ is the constant appearing in Proposition \ref{p.holder} and $\ell$ is defined in \eqref{e.infdiam}.}
    \item \label{lb.ra2} for each $j\geq1$, \begin{equation}\label{e.ration2}
        |\lambda^{(R)}_{n_{k},j}-\lambda^{(R)}_{j}|\rightarrow0\text{ as }k\rightarrow\infty\text{ and }\lambda^{(R)}_{j}\lesssim {\Psi_{\infty}\left(\frac{R\wedge \diam(X_{\infty},d_{\infty})}{j}\right)^{-1}}.
    \end{equation}
    \item\label{lb.ra3} for each $j\geq1$, 
   \begin{equation}\label{e.ration3} \begin{aligned}
        \sup_{x\in B_{\infty}(p_{\infty},R)}|\varphi_{j}^{(R)}(x)-\varphi^{(R)}_{n_{k},j}(x)|&\rightarrow0\text{ as }k\rightarrow\infty\\ \left\lVert\varphi_{j}^{(R)}\right\rVert_{L^{\infty}\left(B_{\infty}(p_{\infty},R)\right)}&\lesssim \inf_{t\in(0,\infty)}\frac{e^{\lambda_{j}^{(R)}t}}{V_{\infty}(\Psi_{\infty}^{-1}(t){\wedge\diam(X_{\infty},d_{\infty}) })^{1/2}}.
    \end{aligned}
    \end{equation}
    \item\label{lb.ra4} $\text{for all }t\in\mathbb{Q}_{+},\ x,y\in B_{\infty}(p_{\infty},R)$ and $\delta>0$, 
    \begin{equation}
   \begin{aligned}\label{e.ration4}
      & \text{the series } \widetilde{q}^{(R)}(t,x,y)=\sum_{j=1}^{\infty}\exp(-\lambda_{j}^{(R)}t)\varphi_{j}^{(R)}(x)\varphi_{j}^{(R)}(y)\quad\\ 
       &\text{converges uniformly on $\left(\mathbb{Q}\cap [\delta,\infty)\right)\times B_{\infty}(p_{\infty},R)\times B_{\infty}(p_{\infty},R)$}
    \end{aligned}
    \end{equation}
\end{enumerate}
\end{proposition}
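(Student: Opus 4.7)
The plan is to apply the Arzelà--Ascoli-type Lemma \ref{l.a-a} to the families $\{q^{(n,R)}(t,\cdot,\cdot)\}_{n}$ for each fixed $t\in\mathbb{Q}_{+}$ and $\{\varphi_{n,j}^{(R)}\}_{n}$ for each fixed $j\in\mathbb{N}$, to extract a convergent subsequence of the real sequence $\{\lambda_{n,j}^{(R)}\}_{n}$ for each $j$ via Bolzano--Weierstrass, and then to diagonalize over the countable index set $\mathbb{Q}_{+}\cup\mathbb{N}$ to obtain a single subsequence $\{n_{k}\}$ realizing all three convergences simultaneously. The estimate $\lambda_{j}^{(R)}\lesssim 1/\Psi_{\infty}(R/j)$ in \eqref{e.ration2} will follow by passing \eqref{e.weyl} to the limit using \ref{lb.as3}, and the $L^{\infty}$ bound on $\varphi_{j}^{(R)}$ in \eqref{e.ration3} from \eqref{e.eigfunbd} using \ref{lb.as3} and \ref{lb.as5}.

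To verify the hypotheses of Lemma \ref{l.a-a} for $\{q^{(n,R)}(t,\cdot,\cdot)\}_{n}$, I would combine Proposition \ref{p.holder} with the approximate-isometry estimate \eqref{e.iso4}, which yields $d_{n}(g_{n}(x_{1}),g_{n}(x_{2}))\leq d_{\infty}(x_{1},x_{2})+\epsilon_{n}$ and so converts the intrinsic Hölder bound \eqref{e.hold1} into an estimate of the form $\eta(d_{\infty}(x,y))+\delta_{n}$ with $\delta_{n}\to 0$, as required by the lemma. Uniform boundedness of $q^{(n,R)}(t,\cdot,\cdot)$ comes from the on-diagonal heat kernel upper bound together with \ref{lb.as3} and \ref{lb.as5}. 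The analogous argument applied to \eqref{e.hold2} and \eqref{e.eigfunbd} handles the families $\{\varphi_{n,j}^{(R)}\}_{n}$ for each fixed $j$, and the eigenvalue sequence is bounded uniformly in $n$ by Proposition \ref{p.weyl} combined with \ref{lb.as3}.

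The main obstacle is the Hilbert--Schmidt expansion \eqref{e.ration4}, which requires passing to the limit inside the infinite series \eqref{e.hs(n,r)}. The plan is to establish a tail bound uniform in $n$. Integrating the on-diagonal estimate $p^{(n,R)}(s,x,x)\leq C/V_{n}(\Psi_{n}^{-1}(s))$ over $B_{n}(p_{n},R)$ yields the trace bound
\[\sum_{j=1}^{\infty} e^{-\lambda_{n,j}^{(R)} s}\leq \frac{C V_{n}(R)}{V_{n}(\Psi_{n}^{-1}(s))},\]
whose right-hand side is uniformly bounded in $n$ by \ref{lb.as5}. The elementary averaging $e^{-\lambda_{n,j}^{(R)} s}\leq j^{-1}\sum_{k=1}^{j} e^{-\lambda_{n,k}^{(R)} s}$ then yields a uniform-in-$n$ lower growth $\lambda_{n,j}^{(R)}\gtrsim s^{-1}\log(j/C_{s})$. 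Combining this growth with the $L^{\infty}$ estimate \eqref{e.eigfunbd} (with a judicious choice of the parameter in the infimum so as to split off an extra exponential decay factor) produces, for each $\delta>0$,
\[\sup_{n\geq 1,\ t\in\mathbb{Q}\cap[\delta,\infty),\ x,y\in \overline{B_{\infty}(p_{\infty},R)}}\ \sum_{j\geq J} e^{-\lambda_{n,j}^{(R)} t}\bigl|\varphi_{n,j}^{(R)}(x)\varphi_{n,j}^{(R)}(y)\bigr|\longrightarrow 0\quad\text{as }J\to\infty.\]
With this uniform tail control in hand, the termwise convergence established in the previous steps upgrades to the uniform convergence of the series claimed in \eqref{e.ration4}, and simultaneously identifies $\widetilde{q}^{(R)}(t,x,y)$ with $\sum_{j}e^{-\lambda_{j}^{(R)} t}\varphi_{j}^{(R)}(x)\varphi_{j}^{(R)}(y)$ for every $t\in\mathbb{Q}_{+}$.
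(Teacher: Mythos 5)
Your proof follows the same overall architecture as the paper's: Arzelà--Ascoli-type extraction (Lemma~\ref{l.a-a}) for the kernels and eigenfunctions, Bolzano--Weierstrass for the eigenvalues (which are bounded uniformly in $n$ by Proposition~\ref{p.weyl}), a diagonalization over $\mathbb{Q}_{+}\cup\mathbb{N}$, and a tail estimate on the Hilbert--Schmidt expansion controlled by the trace of $P^{(n,R)}_{\delta/2}$. The only genuine divergence is in how you justify passing to the limit inside the series in part~\ref{lb.ra4}.

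The paper bounds the tail of the \emph{limit} series directly: from~\eqref{e.eigfunbd} in the limit one gets $\bigl|\exp(-\lambda_{j}^{(R)}t)\varphi_{j}^{(R)}(x)\varphi_{j}^{(R)}(y)\bigr|\lesssim \exp(-\lambda_{j}^{(R)}\delta/2)/V_{\infty}(\Psi_{\infty}^{-1}(\delta/4))$ for $t\geq\delta$, and then $\sum_{j}\exp(-\lambda_{j}^{(R)}\delta/2)<\infty$ follows from Fatou's lemma applied to the trace identity $\sum_{j}\exp(-\lambda_{n_{k},j}^{(R)}\delta/2)=\tr P^{(n_{k},R)}_{\delta/2}\lesssim V_{l}(\delta^{1/\beta'})^{-1}$, after which the Weierstrass $M$-test gives uniform convergence of the limit series. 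Your route instead establishes uniform-in-$n$ control on the tails of the \emph{approximating} series, via the same trace bound plus the elementary averaging $j\,e^{-\lambda_{n,j}^{(R)}s}\leq\sum_{k\leq j}e^{-\lambda_{n,k}^{(R)}s}$, which yields $\lambda_{n,j}^{(R)}\gtrsim s^{-1}\log(j/C_{s})$ uniformly in $n$; combined with~\eqref{e.eigfunbd} this gives summable tail majorants independent of $n$. Both routes are correct. Yours makes the interchange of $\lim_{n}$ with $\sum_{j}$ fully explicit (the paper's remark ``take the limit on both sides'' relies on this uniform tail control being visible in the Fatou argument), at the cost of introducing the eigenvalue growth bound, which the paper's route avoids since the trace sum plus Fatou already produce a single summable majorant for the limit series without needing a pointwise lower bound on $\lambda_{n,j}^{(R)}$. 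One small caveat: the uniform boundedness in $n$ of $V_{n}(R)/V_{n}(\Psi_{n}^{-1}(s))$ uses~\ref{lb.as2} and~\ref{lb.as4} (specifically~\eqref{e.ueslu}) rather than~\ref{lb.as5} alone, but that is a bookkeeping point, not a gap.
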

\begin{proof}
\begin{enumerate}[label=\textup{(\arabic*)},align=right,leftmargin=*,topsep=5pt,parsep=0pt,itemsep=2pt]
    \item[\ref{lb.ra1}] We first construct $\widetilde{q}^{(R)}$. By Proposition \ref{p.holder}, for each fixed $t>0$, {since $\diam(X_{n},d_{n})\to \diam(X_{\infty},d_{\infty})$ by Lemma \ref{l.diam} and $\ell:=\inf_{1\leq n<\infty}\diam(X_{n},d_{n})>0$}, we have
    \begin{align}
       &\phantom{\leq\ } \left|q^{(n,R)}(t,x_{1},y_{1})-q^{(n,R)}(t,x_{2},y_{2})\right| \\ &=\left|p^{(n,R)}(t,g_{n}(x_{1}),g_{n}(y_{1}))-p^{(n,R)}(t,g_{n}(x_{2}),g_{n}(y_{2})) \right|\\ 
       &\lesssim \frac{d_{n}((g_{n}(x_{1}),g_{n}(y_{1})),(g_{n}(x_{2}),g_{n}(y_{2})))^{\Theta}}{(\Psi_{n}^{-1}(t){\wedge \diam(X_{n},d_{n})})^{\Theta}V_{n}(\Psi_{n}^{-1}(t){\wedge \diam(X_{n},d_{n})})}\\
        &\lesssim \frac{1}{(\Psi_{n}^{-1}(t){\wedge \ell})^{\Theta}V_{n}(\Psi_{n}^{-1}(t){\wedge \ell})}\left(d_{\infty}((x_{1},y_{1}),(x_{2},y_{2}))+2\epsilon_{n} \right)^{\Theta}\\ 
        & \lesssim \frac{1}{(t^{1/\beta^{\prime}}{\wedge \ell})^{\Theta}V_{l}(t^{1/\beta^{\prime}}{\wedge \ell})}\left(d_{\infty}((x_{1},y_{1}),(x_{2},y_{2}))^{\Theta}+\epsilon^{\Theta}_{n}\right),\label{e.raeq1}
    \end{align}
combine which and the uniform heat kernel bounds, we can apply Lemma \ref{l.a-a} with \[\eta(s)= \frac{C}{(t^{1/\beta^{\prime}}{\wedge \ell})^{\Theta}V_{l}(t^{1/\beta^{\prime}}{\wedge \ell})}s^{\Theta}\text{ and }\delta_{n}= \frac{C}{(t^{1/\beta^{\prime}}{\wedge \ell})^{\Theta}V_{l}(t^{1/\beta^{\prime}}{\wedge \ell})}\epsilon^{\Theta}_{n},\] and conclude that there is a {H\"{o}lder} continuous function, denoted by $\widetilde{q}^{(R)}(t,x,y)$, defined on $B_{\infty}(p_{\infty},R)\times B_{\infty}(p_{\infty},R)$ such that, along a subsequence (which may depend on $t$ and $R$), \[\sup_{x,y\in B_{\infty}(p_{\infty},R)}|\widetilde{q}^{(R)}(t,x,y)-p^{(n,R)}(t,g_{n}(x),g_{n}(y))|\rightarrow0\text{ as }n\rightarrow\infty.\]By a standard diagonal argument, we can find a common subsequence such that the convergence holds for all $t\in\mathbb{Q}_{+}$ along that subsequence.
    \item[\ref{lb.ra2}] By Proposition \ref{p.weyl}, the sequence $\{\lambda^{(R)}_{n,j}\}_{n=1}^{\infty}$ is bounded for each $j$, we can find a subsequence and a $\lambda^{(R)}_{j}$ such that $\lambda^{(R)}_{n,j}\rightarrow \lambda^{(R)}_{j}$ as $n\rightarrow\infty$, which have the given upper bound, {using the convergence $\Psi_{n}\to \Psi_{\infty}$ and the convergence of diameters in Lemma \ref{l.diam}}. 
    \item[\ref{lb.ra3}] By \eqref{e.eigfunbd}, \eqref{e.hold2} and using the same argument as \ref{lb.ra1}, we can apply Lemma \ref{l.a-a} and find continuous functions $\varphi^{(R)}_{j}$ on $B_{\infty}(p_{\infty},R)$ such that along some subsequence (which may depend on $j$), \[\sup_{x\in B_{\infty}(p_{\infty},R)}|\varphi_{j}^{(R)}(x)-\phi^{(R)}_{n,j}(g_{n}(x))|\rightarrow0\text{ as }n\rightarrow\infty.\] By a standard diagonal argument again, we can find a common subsequence $\{n_{k}\}$ such that the convergence in  \eqref{e.ration1}, \eqref{e.ration2} and \eqref{e.ration3} holds along this subsequence for all $j\geq1$ and all $t\in\mathbb{Q}_{+}$.
    \item[\ref{lb.ra4}] From \eqref{e.hs(n,r)} we know that for $t\in\mathbb{Q}_{+}$,
    \begin{equation*}
       {q}^{(n_{k},R)}(t,x,y)=\sum_{j=1}^{\infty}\exp(-\lambda_{n_{k},j}^{(R)}t)\varphi_{n_{k},j}^{(R)}(x)\varphi_{n_{k},j}^{(R)}(y)\quad\text{for all }x,y\in B_{\infty}(p_{\infty},R).
    \end{equation*} By \eqref{e.ration1}, \eqref{e.ration2}, \eqref{e.ration3} and the uniform convergence of \eqref{e.hs(n,r)}, we can take the limit on both side and conclude that the series in \eqref{e.ration4} convergent. Fix a small $\delta\in(0,\Psi_{\infty}(\ell))$\footnote{{We adopt the convention $\Psi_{\infty}(\infty):=\infty$.}}. Let us show that the series \eqref{e.ration4} converges uniformly on $\left(\mathbb{Q}\cap [\delta,\infty)\right)\times B_{\infty}(p_{\infty},R)\times B_{\infty}(p_{\infty},R)$. In fact, by \eqref{e.ration3}, \begin{equation}\label{e.unif-1}
    \left\lVert\varphi_{j}^{(R)}\right\rVert_{L^{\infty}(B_{\infty}(p_{\infty},R))}\lesssim \frac{\exp({\lambda_{j}^{(R)}\delta/4})}{V_{\infty}(\Psi_{\infty}^{-1}(\delta/4))^{1/2}}.
    \end{equation}  
    Therefore for $\delta\leq t\in\mathbb{Q}_{+}$, 
    \begin{equation}\label{e.unif}
        \left|\exp(-\lambda_{j}^{(R)}t)\varphi_{j}^{(R)}(x)\varphi_{j}^{(R)}(y)\right|\lesssim \exp(-\lambda_{j}^{(R)}\delta)\frac{\exp({\lambda_{j}^{(R)}\delta/2})}{V_{\infty}(\Psi_{\infty}^{-1}(\delta/4))} \lesssim \frac{\exp(-{\lambda_{j}^{(R)}\delta/2})}{V_{\infty}(\Psi_{\infty}^{-1}(\delta/4))}.
    \end{equation}
By Mercer's theorem on trace \cite[Proposition 5.6.9]{Dav07}, \begin{align}
        \sum_{j=1}^{\infty}\exp(-\lambda_{j}^{(R)}\delta/2)&=\sum_{j=1}^{\infty}\left(\lim_{k\rightarrow\infty}\exp(-\lambda_{n_{k},j}^{(R)}\delta/2)\right)\\ &\leq \liminf_{k\rightarrow\infty}\sum_{j=1}^{\infty}\exp(-\lambda_{n_{k},j}^{(R)}\delta/2)\ \text{(Fatou's lemma)}\\
        &=\liminf_{k\rightarrow\infty}\tr(P^{(n_{k},R)}_{\delta/2})\lesssim \frac{1}{V_{l}(\delta^{1/\beta^{\prime}})}<\infty.\label{e.Mercer}
    \end{align}
Therefore the Weierstrass M-test is applicable to show the uniform convergence in \eqref{e.ration4}.
\end{enumerate}
\end{proof}
Now we extend the convergence in \eqref{e.ration1} and the expansion in \eqref{e.ration4} from $t\in\mathbb{Q}_{+}$ to all $t>0$.
\begin{proposition}\label{p.hs-r}
Fix $R>0$. 
\begin{enumerate}[label=\textup{({\arabic*})},align=right,leftmargin=*,topsep=5pt,parsep=0pt,itemsep=2pt]
\item\label{lb.hsr1} For $x,y\in B_{\infty}(p_{\infty},R)$ and $t>0$, the limit $\lim_{\mathbb{Q}_{+}\ni s\rightarrow t}\widetilde{q}^{(R)}(s,x,y)$ exists and
 \begin{equation}\label{e.hs-r}
       {q}^{(R)}(t,x,y):=\lim_{\mathbb{Q}_{+}\ni s\rightarrow t}\widetilde{q}^{(R)}(s,x,y)=\sum_{j=1}^{\infty}\exp(-\lambda_{j}^{(R)}t)\varphi_{j}^{(R)}(x)\varphi_{j}^{(R)}(y),
       \end{equation}
       and the series in \eqref{e.hs-r} convergent uniformly on $(t,x,y)\in [\delta,\infty)\times B_{\infty}(p_{\infty},R)\times B_{\infty}(p_{\infty},R)$ for any $\delta>0$.
           \item\label{lb.hsr2} The function $q^{(R)}(\cdot,\cdot,\cdot):(0,\infty)\times B_{\infty}(p_{\infty},R)\times B_{\infty}(p_{\infty},R)\to \mathbb{R}$ is continuous. For each $t>0$, $q^{(R)}(t,\cdot,\cdot)$ is H\"older continuous on $B_{\infty}(p_{\infty},R)\times B_{\infty}(p_{\infty},R)$.
\item\label{lb.hsr3} Along the same subsequence in Proposition \ref{p.rational}, there holds \begin{equation}\label{e.real}
        \sup_{x,y\in B_{\infty}(p_{\infty},R)}|{q}^{(R)}(t,x,y)-q^{(n_{k},R)}(t,x,y)|\rightarrow0\text{ as }k\rightarrow\infty\quad \text{for all }t>0,
    \end{equation}
\end{enumerate}
\end{proposition}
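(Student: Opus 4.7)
The plan is to define $q^{(R)}(t,x,y)$ directly as the Hilbert--Schmidt series
\[
q^{(R)}(t,x,y):=\sum_{j=1}^{\infty}e^{-\lambda_{j}^{(R)}t}\varphi_{j}^{(R)}(x)\varphi_{j}^{(R)}(y)
\]
for \emph{all} $t>0$ and then read off the three claims from its properties. First I observe that the proof of Proposition \ref{p.rational}-\ref{lb.ra4} already supplies everything needed to apply the Weierstrass M-test to this series on $[\delta,\infty)\times B_{\infty}(p_{\infty},R)\times B_{\infty}(p_{\infty},R)$ for every $\delta>0$: the sup-norm bound on $\varphi_{j}^{(R)}$ coming from \eqref{e.ration3} (taking $t=\delta/4$) together with the trace-type bound $\sum_{j}e^{-\lambda_{j}^{(R)}\delta/4}<\infty$ already established there were derived without any restriction to rational $t$. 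Uniform convergence together with the joint continuity of each summand in $(t,x,y)$ yields joint continuity of $q^{(R)}$ on $(0,\infty)\times B_{\infty}(p_{\infty},R)\times B_{\infty}(p_{\infty},R)$; since on $\mathbb{Q}_{+}$ this sum coincides with $\widetilde{q}^{(R)}$ by \eqref{e.ration4}, continuity in $t$ then delivers both the limit assertion and the Hilbert--Schmidt expansion of \ref{lb.hsr1}.

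For the Hölder half of \ref{lb.hsr2} I would pass \eqref{e.hold1} to the limit along $\{n_{k}\}$. Writing $q^{(n_{k},R)}(t,x,y)=p^{(n_{k},R)}(t,g_{n_{k}}(x),g_{n_{k}}(y))$ and using \eqref{e.iso4} to bound $d_{n_{k}}(g_{n_{k}}(\cdot),g_{n_{k}}(\cdot))$ by $d_{\infty}$ plus an $O(\epsilon_{n_{k}})$ error produces a Hölder estimate for $q^{(n_{k},R)}$ with constant $C/((\Psi_{n_{k}}^{-1}(t))^{\Theta}V_{n_{k}}(\Psi_{n_{k}}^{-1}(t)))$. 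For rational $t$, \eqref{e.ration1} justifies passing $k\to\infty$ on the left; on the right, \ref{lb.as3}, \ref{lb.as5}, and the standard fact that uniform-on-compacts convergence of continuous increasing bijections transfers to their inverses give convergence of the constant to $C/((\Psi_{\infty}^{-1}(t))^{\Theta}V_{\infty}(\Psi_{\infty}^{-1}(t)))$. The Hölder bound for $q^{(R)}(t,\cdot,\cdot)$ then extends to all $t>0$ by the joint continuity already proved.

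The main obstacle is \ref{lb.hsr3}, since \eqref{e.ration1} only provides convergence for rational $t$. My plan is to upgrade to arbitrary $t>0$ via an equi-Lipschitz estimate in the time variable: I will show that for every $\delta>0$ there is a constant $C(\delta)$, independent of $n$, $x$, $y$, with
\[
|p^{(n,R)}(t,x,y)-p^{(n,R)}(s,x,y)|\le C(\delta)|t-s|\quad\text{for all }t,s\ge\delta.
\]
Starting from the expansion \eqref{e.hs(n,r)}, the elementary inequality $|e^{-\lambda t}-e^{-\lambda s}|\le\lambda|t-s|e^{-\lambda\min(t,s)}$, and the eigenfunction bound \eqref{e.eigfunbd} applied at time $\delta/4$, this reduces to summing $\sum_{j}\lambda_{n,j}^{(R)}e^{-\lambda_{n,j}^{(R)}\delta/2}$; the inequality $\lambda e^{-\lambda\delta/2}\le(4/(e\delta))e^{-\lambda\delta/4}$ absorbs the extra $\lambda$ factor, and the resulting sum is controlled by the uniform trace bound used in Proposition \ref{p.rational}-\ref{lb.ra4}, while \ref{lb.as4} provides the needed uniform lower bound on $V_{n}(\Psi_{n}^{-1}(\delta/4))$. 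The identical argument applied to the series defining $q^{(R)}$ yields the matching equi-Lipschitz bound for the limit kernel. Given $t>0$, setting $\delta=t/2$ and picking a rational $s$ close to $t$, the triangle inequality
\[
\sup_{x,y}|q^{(R)}(t,x,y)-q^{(n_{k},R)}(t,x,y)|\le 2C(\delta)|t-s|+\sup_{x,y}|\widetilde{q}^{(R)}(s,x,y)-q^{(n_{k},R)}(s,x,y)|
\]
becomes arbitrarily small as $k\to\infty$ by \eqref{e.ration1}, completing \ref{lb.hsr3}.
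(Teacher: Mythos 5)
The proposal is correct but takes a genuinely different path for part \ref{lb.hsr3}, which is the crux. The paper derives the uniform-in-$n$ time-Lipschitz bound \eqref{e.real5} by expressing the difference of two Hilbert--Schmidt sums as a limit (via Mercer's theorem and Fatou's inequality) of a difference of traces, and then invoking the time-derivative bound $\abs{\partial_{r} p^{(n,R)}(r,x,x)}\lesssim r^{-1}/V_{n}(\Psi_{n}^{-1}(r))$ from \cite[Corollary 5.7]{GT12}. You instead obtain the same equi-Lipschitz estimate by a purely elementary computation inside the spectral expansion: the mean-value bound $\abs{e^{-\lambda t}-e^{-\lambda s}}\le\lambda\abs{t-s}e^{-\lambda\min(t,s)}$, the $L^{\infty}$ eigenfunction bound \eqref{e.eigfunbd} at a fixed fraction of $\delta$, the absorption $\lambda e^{-\lambda\delta/2}\le (4/e\delta)e^{-\lambda\delta/4}$, and the uniform trace bound already established for \eqref{e.ration4}. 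This is more self-contained (it never leaves the spectral picture and avoids quoting the time-derivative heat-kernel estimate), at the cost of one extra elementary bound; the paper's route is shorter given that \cite[Corollary 5.7]{GT12} is already in the toolbox. The final triangle-inequality step through a nearby rational time is identical.

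Parts \ref{lb.hsr1} and \ref{lb.hsr2} are essentially the same argument as the paper's, packaged differently: you define $q^{(R)}$ as the series rather than as the limit over rational times (cosmetic), and for H\"older continuity at irrational $t$ you appeal to joint continuity and the fact that the H\"older constant is (uniformly in $s$ near $t$) controlled, rather than to the time-Lipschitz bound \eqref{e.real5} as the paper does. One minor point worth making explicit: the ``standard fact'' that $\Psi_{n}^{-1}\to\Psi_{\infty}^{-1}$ uniformly on compacts is not automatic from $\Psi_{n}\to\Psi_{\infty}$ alone; it needs the uniform bi-power scaling \ref{lb.as2} (which gives quantitative strict monotonicity). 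The paper uses this implicitly as well, so it is not a gap, but it deserves a sentence.
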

\begin{proof}
\begin{enumerate}[label=\textup{({\arabic*})},align=right,leftmargin=*,topsep=5pt,parsep=0pt,itemsep=2pt]
\item[\ref{lb.hsr1}]   Let $\delta>0$ and $(t,x,y)\in [\delta,\infty)\times B_{\infty}(p_{\infty},R)\times B_{\infty}(p_{\infty},R)$. By \eqref{e.unif}, {for $s\in[\delta/2,\infty)\cap \mathbb{Q}$,} the series $\sum_{j=1}^{\infty}\exp(-\lambda_{j}^{(R)}s)\varphi_{j}^{(R)}(x)\varphi_{j}^{(R)}(y)$ is dominated by the convergent series $C\sum_{j=1}^{\infty} \frac{\exp(-\lambda_{j}^{(R)}\delta/{4})}{V_{\infty}(\Psi_{\infty}^{-1}(\delta/{8}))}$. Thus we can apply dominated convergence theorem and conclude that \begin{align*}
        {q}^{(R)}(t,x,y)&:=\lim_{\mathbb{Q}_{+}\ni s\rightarrow t}\widetilde{q}^{(R)}(s,x,y)\\&= \lim_{\substack{\mathbb{Q}_{+}\ni s\rightarrow t\\ {s\in[\delta/2,\infty)\cap \mathbb{Q}}}}\sum_{j=1}^{\infty}\exp(-\lambda_{j}^{(R)}s)\varphi_{j}^{(R)}(x)\varphi_{j}^{(R)}(y)\\
        &=\sum_{j=1}^{\infty}\exp(-\lambda_{j}^{(R)}t)\varphi_{j}^{(R)}(x)\varphi_{j}^{(R)}(y).
    \end{align*}
which gives \eqref{e.hs-r}.
    \item[\ref{lb.hsr2}] The continuity of $q^{(R)}(\cdot,\cdot,\cdot)$ follows from the uniform convergent property of \eqref{e.hs-r}. The H\"older continuity of $q^{(R)}(t,\cdot,\cdot)$ follows from the H\"older continuity of $\widetilde{q}^{(R)}(t,\cdot,\cdot)$ in Proposition \ref{p.rational}.
\item[\ref{lb.hsr3}] Let $\delta\leq s\leq t$ and assume that $\delta\in(0,\Psi_{\infty}(\ell))$ is small. 
{\small
\begin{align}
&\phantom{=\ }\abs{\left(\exp(-\lambda_{j}^{(R)}s)-\exp(-\lambda_{j}^{(R)}t)\right)\varphi_{j}^{(R)}(x)\varphi_{j}^{(R)}(y)}\\
&=\left(\exp\left(-\lambda_{j}^{(R)}\left(s-\frac{1}{2}\delta\right)\right)-\exp\left(-\lambda_{j}^{(R)}\left(t-\frac{1}{2}\delta\right)\right)\right)\abs{\exp\left(-\lambda_{j}^{(R)}\delta/2\right)\varphi_{j}^{(R)}(x)\varphi_{j}^{(R)}(y)}\\
&\overset{\eqref{e.unif}}{\lesssim}\frac{\exp(-{\lambda_{j}^{(R)}\delta/4})}{V_{\infty}(\Psi_{\infty}^{-1}(\delta/8))}\left(\exp\left(-\lambda_{j}^{(R)}\left(s-\frac{1}{2}\delta\right)\right)-\exp\left(-\lambda_{j}^{(R)}\left(t-\frac{1}{2}\delta\right)\right)\right)\\
&= \frac{1}{V_{\infty}(\Psi_{\infty}^{-1}(\delta/8))}\left(\exp\left(-\lambda_{j}^{(R)}\left(s-\frac{1}{4}\delta\right)\right)-\exp\left(-\lambda_{j}^{(R)}\left(t-\frac{1}{4}\delta\right)\right)\right)\label{e.real2}
\end{align}
\normalsize}
By definition of $q^{(R)}$ and Mercer's theorem \cite[Proposition 5.6.9]{Dav07},
    \begin{align}
        &\phantom{=\ }\left|{q}^{(R)}(s,x,y)-{q}^{(R)}(t,x,y)\right|
        \\&=\left|\sum_{j=1}^{\infty}\left(\exp(-\lambda_{j}^{(R)}s)-\exp(-\lambda_{j}^{(R)}t)\right)\varphi_{j}^{(R)}(x)\varphi_{j}^{(R)}(y)\right|
        \\&\overset{\eqref{e.real2}}{\lesssim}\frac{1}{V_{\infty}(\Psi_{\infty}^{-1}(\delta/8))}\sum_{j=1}^{\infty}\left(\exp\left(-\lambda_{j}^{(R)}\left(s-\frac{1}{4}\delta\right)\right)-\exp\left(-\lambda_{j}^{(R)}\left(t-\frac{1}{4}\delta\right)\right)\right)
        \\&\leq \frac{1}{V_{\infty}(\Psi_{\infty}^{-1}(\delta/8))}\liminf_{k\to\infty}\sum_{j=1}^{\infty}\left(\exp\left(-\lambda_{n_{k},j}^{(R)}\left(s-\frac{1}{4}\delta\right)\right)-\exp\left(-\lambda_{n_{k},j}^{(R)}\left(t-\frac{1}{4}\delta\right)\right)\right)
        \\&=\liminf_{k\to\infty}\int_{B_{n_{k}}(p_{n_{k}},R)}\frac{\left(p^{(n_{k},R)}\left(s-\frac{1}{4}\delta,x,x\right)-p^{(n_{k},R)}\left(t-\frac{1}{4}\delta,x,x\right)\right)}{V_{\infty}(\Psi_{\infty}^{-1}(\delta/8))}\dif m_{n_{k}}(x)\label{e.real3}
    \end{align}
    By \cite[Corollary 5.7]{GT12},
    \begin{align}
   &\phantom{=\ } \abs{\int_{B_{n}(p_{n},R)}\left(p^{(n,R)}\left(s-\frac{1}{4}\delta,x,x\right)-p^{(n,R)}\left(t-\frac{1}{4}\delta,x,x\right)\right)\dif m_{n}(x)}\\
   &\leq \int_{B_{n}(p_{n},R)}\left(\int_{t-\frac{1}{4}\delta}^{s-\frac{1}{4}\delta}\abs{\frac{\partial}{\partial r}p^{(n,R)}\left(r,x,x\right)}\dif r\right)\dif m_{n}(x)\\
   &\lesssim \int_{B_{n}(p_{n},R)}\left(\int_{t-\frac{1}{4}\delta}^{s-\frac{1}{4}\delta}\frac{2}{r}\frac{1}{V_{n}(\Psi^{-1}_{n}(r))}\dif r\right)\dif m_{n}(x)\lesssim \frac{8}{3\delta}\frac{V_{n}(R)}{V_{n}(\Psi^{-1}_{n}(3\delta/4))}\abs{s-t}\label{e.real4}
    \end{align}
    
    Combine \eqref{e.real3} and \eqref{e.real4}, we see that 
    \begin{equation}\label{e.real5}
    \left|{q}^{(R)}(s,x,y)-{q}^{(R)}(t,x,y)\right|\lesssim \frac{1}{\delta}\frac{V_{\infty}(R)}{V_{\infty}(\Psi^{-1}_{\infty}(\delta/8))^{2}}\abs{s-t}.
    \end{equation}
    Similarly by \eqref{e.real4}, \begin{align}\label{e.real6}
        \left|q^{(n_{k},R)}(s,x,y)-q^{(n_{k},R)}(t,x,y)\right|\lesssim\frac{1}{\delta}\frac{V_{n_{k}}(R)}{V_{n_{k}}(\Psi^{-1}_{n_{k}}(\delta/8))^{2}}\abs{s-t}.
    \end{align}
    For each fixed $\delta>0$ and any $t>\delta>0$, if $t\in(0,\infty)\setminus\mathbb{Q}$, we choose $s\in(\delta,t)\cap\mathbb{Q}$, and by \eqref{e.real5} and \eqref{e.real6}, \begin{align*}
        &\left|{q}^{(R)}(t,x,y)-q^{(n_{k},R)}(t,x,y)\right|\\ \leq& \left|{q}^{(R)}(t,x,y)-{q}^{(R)}(s,x,y)\right|+\left|{q}^{(R)}(s,x,y)-q^{(n_{k},R)}(s,x,y)\right|\\
        &\qquad+\left|q^{(n_{k},R)}(s,x,y)-q^{(n_{k},R)}(t,x,y)\right|\\
        \lesssim& \frac{1}{\delta}\left(\frac{V_{n_{k}}(R)}{V_{n_{k}}(\Psi^{-1}_{n_{k}}(\delta/8))^{2}}+\frac{V_{\infty}(R)}{V_{\infty}(\Psi^{-1}_{\infty}(\delta/8))^{2}}\right) (t-s)+ \left|{q}^{(R)}(s,x,y)-q^{(n_{k},R)}(s,x,y)\right|,
    \end{align*}
    which implies 
    \begin{align}
    &\phantom{\leq\ }\lim_{k\to\infty}\sup_{x,y\in B_{\infty}(p_{\infty},R)}\left|{q}^{(R)}(t,x,y)-q^{(n_{k},R)}(t,x,y)\right|\\
    &\lesssim \frac{1}{\delta}\frac{V_{\infty}(R)}{V_{\infty}(\Psi^{-1}_{\infty}(\delta/8))^{2}} (t-s)+ \lim_{k\to\infty}\sup_{x,y\in B_{\infty}(p_{\infty},R)}\left|{q}^{(R)}(s,x,y)-q^{(n_{k},R)}(s,x,y)\right| \\&\overset{\eqref{e.ration1}}{\lesssim} \frac{1}{\delta}\frac{V_{\infty}(R)}{V_{\infty}(\Psi^{-1}_{\infty}(\delta/8))^{2}} (t-s)
    \end{align}
for any $s\in(\delta,t)\cap\mathbb{Q}$. Letting $\mathbb{Q}\ni s\to t$, we obtain \eqref{e.real}. 
    \end{enumerate}
\end{proof}
{\begin{remark}\label{r.Hold-r}
	For any bounded Borel measurable function $f\in L^{\infty}(B_{\infty}(p_{\infty},R),m_{\infty})$, we define \begin{equation}
		Q_{t}^{(R)}f(x):=\int_{B_{\infty}(p_{\infty},R)}q^{(R)}(t,x,y)f(y)\dif m_{\infty}(y),\ x\in B_{\infty}(p_{\infty},R). 
	\end{equation}
	Then by the H\"older continuity of $q^{(R)}(t,\cdot,\cdot)$ in Proposition \ref{p.hs-r}-\ref{lb.hsr2} and that \[m_{\infty}(B_{\infty}(p_{\infty},R))\lesssim V_{\infty}(R)<\infty,\] we see that $Q_{t}^{(R)}f$ is a H\"older continuous function on $B_{\infty}(p_{\infty},R)$. By the proof of Proposition \ref{p.conser} later, we have $Q_{t}^{(R)}\one_{B_{\infty}(p_{\infty},R)}\leq 1$ so $Q_{t}^{(R)}f$ is also bounded by $\norm{f}_{L^{\infty}(B_{\infty}(p_{\infty},R),m_{\infty})}$.
\end{remark}}
\section{Dirichlet form on limit space}\label{s.dflim}
In this section, we will show that the function $q^{(R)}$ given in Proposition \ref{p.hs-r} gives a heat kernel and a semigroup on $B_{\infty}(p_{\infty},R)$. By letting $R\to\infty$, we will obtain a heat kernel on $X_{\infty}$.
\begin{proposition}\label{p.orthn}
    For each $R>0$, the functions $\{\varphi_{j}^{(R)}\}_{j=1}^{\infty}$ are orthonormal in the Hilbert space $L^{2}(B_{\infty}(p_{\infty},R),m_{\infty})$.
\end{proposition}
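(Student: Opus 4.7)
The plan is to pass the $L^2(B_{n_k}(p_{n_k},R),m_{n_k})$-orthonormality of the eigenfunctions $\{\phi_{n_k,j}^{(R)}\}$, namely
\[\int_{B_{n_k}(p_{n_k},R)} \phi_{n_k,j}^{(R)}(x)\,\phi_{n_k,k}^{(R)}(x)\,dm_{n_k}(x) = \delta_{jk},\]
to the limit along the common subsequence $\{n_k\}$ fixed in Proposition~\ref{p.rational}, using the weak convergence of pushforward measures provided by Proposition~\ref{p.mea.tan}. The first step is to replace the true eigenfunctions by their ``pullback through $f_{n_k}$'': since $d_{n_k}(x,g_{n_k}(f_{n_k}(x)))\leq \epsilon_{n_k}$ by \eqref{e.iso3}, the Hölder estimate \eqref{e.hold2} together with the sup norm bound \eqref{e.eigfunbd} give
\[\sup_{x\in B_{n_k}(p_{n_k},R)}\left|\phi_{n_k,\ell}^{(R)}(x)-\varphi_{n_k,\ell}^{(R)}(f_{n_k}(x))\right|\longrightarrow 0\quad (k\to\infty),\]
so that orthonormality becomes $\int_{B_{n_k}(p_{n_k},R)}(\varphi_{n_k,j}^{(R)}\varphi_{n_k,k}^{(R)})(f_{n_k}(x))\,dm_{n_k}(x) = \delta_{jk} + o(1)$.

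The second step is to move to a pushforward integral on $X_\infty$. The symmetric difference of $B_{n_k}(p_{n_k},R)$ and $f_{n_k}^{-1}(B_\infty(p_\infty,R))$ is contained in an annulus of width at most $\epsilon_{n_k}$, whose $m_{n_k}$-measure is $\lesssim(\epsilon_{n_k}/R)^{\gamma}V_u(R+\epsilon_{n_k})$ by Lemma~\ref{l.annu} together with \ref{lb.as4}. Combined once more with the uniform $L^\infty$ bound \eqref{e.eigfunbd}, this produces only an $o(1)$ error, and the standard change of variables for pushforwards yields
\[\int_{B_\infty(p_\infty,R)} \varphi_{n_k,j}^{(R)}\,\varphi_{n_k,k}^{(R)} \, d(f_{n_k})_\#\!\left(m_{n_k}|_{\overline{B_{n_k}(p_{n_k},R_{n_k})}}\right) = \delta_{jk} + o(1).\]

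The final step is to take $k\to\infty$. Uniform convergence $\varphi_{n_k,\ell}^{(R)}\to\varphi_\ell^{(R)}$ on $B_\infty(p_\infty,R)$ from Proposition~\ref{p.rational}, the uniform boundedness of $\varphi_{n_k,\ell}^{(R)}$, and the Hölder (hence bounded uniformly continuous) character of the limit $\varphi_j^{(R)}\varphi_k^{(R)}$ all allow me to replace the integrand by its limit, while the weak convergence \eqref{e.vague} of the restricted pushforwards to $m_\infty|_{B_\infty(p_\infty,R)}$ handles the measure. The conclusion is $\int_{B_\infty(p_\infty,R)} \varphi_j^{(R)}\varphi_k^{(R)}\,dm_\infty = \delta_{jk}$.

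The only delicate point is the control of the boundary layer near $\partial B_{n_k}(p_{n_k},R)$: the maps $f_{n_k}$, $g_{n_k}$ are only $\epsilon_{n_k}$-approximate isometries, so the two balls match only up to an $\epsilon_{n_k}$-annulus, and the eigenfunctions are not known to vanish there. Lemma~\ref{l.annu} is exactly the tool that makes this harmless, since it guarantees an annular-volume estimate with an exponent $\gamma>0$ coming from the uniform doubling assumption~\ref{lb.as4}; combined with \eqref{e.eigfunbd}, the boundary contributions disappear in the limit. An alternative route would be to first pass the Chapman–Kolmogorov identity for $p^{(n_k,R)}$ to the limit to obtain $q^{(R)}(t+s,x,z)=\int q^{(R)}(t,x,y)q^{(R)}(s,y,z)\,dm_\infty(y)$ and then extract orthonormality from uniqueness of the spectral expansion, but this is a longer detour than the direct weak-convergence argument above.
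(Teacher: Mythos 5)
Your proposal is correct and follows essentially the same approach as the paper: both proofs combine the orthonormality of $\{\phi_{n,j}^{(R)}\}$ in $L^2(B_n(p_n,R),m_n)$ with the weak convergence of pushforward measures from Proposition \ref{p.mea.tan}, the annulus volume estimate of Lemma \ref{l.annu} to control the boundary mismatch between $B_n(p_n,R)$ and $f_n^{-1}(B_\infty(p_\infty,R))$, the uniform sup-norm bound \eqref{e.eigfunbd} and Hölder estimate \eqref{e.hold2} to compare $\phi_{n,\ell}^{(R)}$ with $\varphi_{n,\ell}^{(R)}\circ f_n$, and the uniform convergence from Proposition \ref{p.rational} to pass to $\varphi_\ell^{(R)}$. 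The paper presents the argument in the reverse order (starting from $\int \varphi_j^{(R)}\varphi_k^{(R)}\,dm_\infty$ and pulling back, with the error terms grouped as $I_{1,n}$, $I_{2,n}$), but the ingredients and the decomposition are the same.
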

\begin{proof}Let $1\leq j,k<\infty$. By weak convergence of measures and the H\"older continuity of $\varphi_{j}^{(R)}$ in Proposition \ref{p.rational},
    \begin{align*}
        &\phantom{=\ \ }\int_{B_{\infty}(p_{\infty},R)} \varphi_{j}^{(R)}(x)\varphi_{k}^{(R)}(x)\dif m_{\infty}(x)\\ &=\lim_{n\rightarrow\infty} \int_{B_{\infty}(p_{\infty},R)} \varphi_{j}^{(R)}(x)\varphi_{k}^{(R)}(x)\dif \left(f_{n}\right)_{\#}\left(m_{n}|_{\overline{B_{n}(p_{n},R_{n})}}\right)\\
        &\overset{\eqref{e.vague}}{=} \lim_{n\rightarrow\infty} \int_{\overline{B_{n}(p_{n},R_{n})}}{\one}_{B_{\infty}(p_{\infty},R)}(f_{n}(x))\varphi_{j}^{(R)}(f_{n}(x))\varphi_{k}^{(R)}(f_{n}(x))\dif m_{n}(x)\\
        &:= \lim_{n\rightarrow\infty} (I_{1,n}+I_{2,n})
    \end{align*}
    where\begin{align*}
    I_{1,n}&:=\int_{B_{n}(p_{n},R)}\varphi_{j}^{(R)}(f_{n}(x))\varphi_{k}^{(R)}(f_{n}(x))\dif m_{n}(x),\\
    I_{2,n}&:=\int_{X_{n}}\left({\one}_{A_{n}}(x)-{\one}_{C_{n}}(x)\right)\varphi_{j}^{(R)}(f_{n}(x))\varphi_{k}^{(R)}(f_{n}(x))\dif m_{n}(x),\\
        A_{n}&:=\left(\overline{B_{n}(p_{n},R_{n})}\cap f_{n}^{-1}\left(B_{\infty}(p_{\infty},R)\right)\right)\setminus B_{n}(p_{n},R)\subset  B_{n}(p_{n},R+2\epsilon_{n})\setminus B_{n}(p_{n},R),\\
        C_{n}&:= B_{n}(p_{n},R)\setminus \left(\overline{B_{n}(p_{n},R_{n})}\cap f_{n}^{-1}\left(B_{\infty}(p_{\infty},R)\right)\right)\subset  B_{n}(p_{n},R)\setminus B_{n}(p_{n},R-2\epsilon_{n}),
    \end{align*}
since $\left(\overline{B_{n}(p_{n},R_{n})}\cap f_{n}^{-1}\left(B_{\infty}(p_{\infty},R)\right)\right)=(B_{n}(p_{n},R)\setminus C_{n})\cup A_{n}$. As a consequence, by Lemma \ref{l.annu} we have that \[|I_{2,n}|\leq m_{n}\left(B_{n}(p_{n},R+2\epsilon_{n})\setminus B_{n}(p_{n},R-2\epsilon_{n})\right)\cdot \lVert \varphi_{j}^{(R)}\rVert_{L^{\infty}}\lVert \varphi_{k}^{(R)}\rVert_{L^{\infty}}\rightarrow 0\text{ as } n\rightarrow\infty.\] Therefore, 
\begin{align*}
    &\int_{B_{\infty}(p_{\infty},R)} \varphi_{j}^{(R)}(x)\varphi_{k}^{(R)}(x)\dif m_{\infty}(x)= \lim_{n\rightarrow\infty}I_{1,n}\\
    =& \lim_{n\rightarrow\infty}\int_{B_{n}(p_{n},R)}\varphi_{j}^{(R)}(f_{n}(x))\varphi_{k}^{(R)}(f_{n}(x))-\varphi_{n,j}^{(R)}(f_{n}(x))\varphi_{n,k}^{(R)}(f_{n}(x))\dif m_{n}(x)\\&\ + \lim_{n\rightarrow\infty}\int_{B_{n}(p_{n},R)}\varphi_{n,j}^{(R)}(f_{n}(x))\varphi_{n,k}^{(R)}(f_{n}(x))-\phi_{n,j}^{(R)}(x)\phi_{n,k}^{(R)}(x)\dif m_{n}(x)\\&\ +\lim_{n\rightarrow\infty}\int_{B_{n}(p_{n},R)}\phi_{n,j}^{(R)}(x)\phi_{n,k}^{(R)}(x)\dif m_{n}(x)
\end{align*}
The first term is zero as $\varphi_{n,j}^{(R)}\varphi_{n,k}^{(R)}\rightarrow \varphi_{j}^{(R)}\varphi_{k}^{(R)}$ uniformly by Proposition \ref{p.rational}-\ref{lb.ra3}, and $m_{n}(B_{n}(p_{n},R))$ is uniformly bounded. The second term is zero, as $\{\phi_{n,j}^{(R)}\phi_{n,k}^{(R)}\}_{1\leq n<\infty}$ is equi-continuous by \eqref{e.hold2} {and \eqref{e.eigfunbd}}, $\varphi_{n,j}^{(R)}\circ f_{n}=\phi_{n,j}^{(R)}\circ(g_{n}\circ f_{n})$ and $d_{n}((g_{n}\circ f_{n})(x),x)\leq \epsilon_{n}\rightarrow0$ as $n\rightarrow\infty$. The third term is $\delta_{jk}$ since the functions {$\{\phi_{n,j}^{(R)}\}_{j=1}^{\infty}$} are orthonormal in $L^{2}(B_{n}(p_{n},R),m_{n})$.
\end{proof}
\begin{proposition}\label{p.heat-r}
    $q^{(R)}$ is a heat kernel on $B_{\infty}(p_{\infty},R)$. To be precise, \begin{enumerate}[label=\textup{(\arabic*)},align=right,leftmargin=*,topsep=5pt,parsep=0pt,itemsep=2pt]
        \item\label{it.heat-r1} \textup{Measurability:} The function $q^{(R)}:(0,\infty)\times B_{\infty}(p_{\infty},R)\times B_{\infty}(p_{\infty},R)\to \mathbb{R}$ is measurable with respect to the Borel $\sigma$-algebra.
        \item\label{it.heat-r2}  \textup{Symmetry:} for any $x,y\in B_{\infty}(p_{\infty},R)$, $t>0$, $q^{(R)}(t,x,y)=q^{(R)}(t,y,x)\geq0$
        \item\label{it.heat-r3}  \textup{Semigroup property:} for all $t,s>0$ and $x,y\in B_{\infty}(p_{\infty},R)$,
        \begin{equation}\label{e.qheat1}
            q^{(R)}(t+s,x,y)=\int_{B_{\infty}(p_{\infty},R)}q^{(R)}(t,x,z)q^{(R)}(s,z,y)\dif m_{\infty}(z).
        \end{equation}
        \item\label{it.heat-r4}  \textup{Contraction property:} for all $f\in L^{2}(B_{\infty}(p_{\infty},R))$, \begin{equation}\label{e.qheat2}
            \left\lVert \int_{B_{\infty}(p_{\infty},R)}q^{(R)}(t,\cdot,y) f(y)\dif m_{\infty}(y)\right\rVert_{L^{2}(B_{\infty}(p_{\infty},R))}\leq \left\lVert f\right\rVert_{L^{2}(B_{\infty}(p_{\infty},R))}.
        \end{equation}
    \end{enumerate} 
\end{proposition}
\begin{proof}
\begin{enumerate}[label=\textup{(\arabic*)},align=right,leftmargin=*,topsep=5pt,parsep=0pt,itemsep=2pt]
       \item[\ref{it.heat-r1}] Since $q^{(R)}$ is continuous by Proposition \ref{p.hs-r}-\ref{lb.hsr2}, it is measurable. 
       \item[\ref{it.heat-r2}] This follows directly from \eqref{e.real}.
       \item[\ref{it.heat-r3}] By Proposition \ref{p.hs-r}-\ref{lb.hsr1}, Proposition \ref{p.orthn} and Fubini's theorem,  
       \begin{align*}
           &\int_{B_{\infty}(p_{\infty},R)}q^{(R)}(t,x,z)q^{(R)}(s,z,y)\dif m_{\infty}(z)\\=\ &\int_{B_{\infty}(p_{\infty},R)}\sum_{j,k=1}^{\infty}\exp(-\lambda_{k}^{(R)}t-\lambda_{j}^{(R)}s)\varphi_{k}^{(R)}(x)\varphi_{k}^{(R)}(z)\varphi_{j}^{(R)}(z)\varphi_{j}^{(R)}(y)\dif m_{\infty}(z)\\=\ & \sum_{j,k=1}^{\infty}\exp(-\lambda_{k}^{(R)}t-\lambda_{j}^{(R)}s)\left( \int_{B_{\infty}(p_{\infty},R)}\varphi_{k}^{(R)}(z)\varphi_{j}^{(R)}(z)\dif m_{\infty}(z)\right) \varphi_{k}^{(R)}(x)\varphi_{j}^{(R)}(y)\\
           =\ & \sum_{j,k=1}^{\infty}\exp(-\lambda_{k}^{(R)}t-\lambda_{j}^{(R)}s)\varphi_{k}^{(R)}(x)\varphi_{j}^{(R)}(y)\delta_{jk}\\
           =\ & \sum_{j=1}^{\infty} \exp(-\lambda_{j}^{(R)}(t+s))\varphi_{j}^{(R)}(x)\varphi_{j}^{(R)}(y)= q^{(R)}(t+s,x,y).
       \end{align*}
       \item[\ref{it.heat-r4}] For any $f\in L^{2}(B_{\infty}(p_{\infty},R))$, by Fubini's theorem we have
       \begin{align*}
           &\int_{B_{\infty}(p_{\infty},R)}\left(\int_{B_{\infty}(p_{\infty},R)}q^{(R)}(t,x,y)f(y)\dif m_{\infty}(y)\right)^{2}\dif m_{\infty}(x)\\ \leq&\  
             \int_{B_{\infty}(p_{\infty},R)}\left(\int_{B_{\infty}(p_{\infty},R)}q^{(R)}(t,x,y)\left|f(y)\right|\dif m_{\infty}(y)\right)^{2}\dif m_{\infty}(x)\\ 
           \overset{\eqref{e.hs-r}}{=}&\ \int_{B_{\infty}(p_{\infty},R)}\left(\sum_{j=1}^{\infty}e^{-\lambda^{(R)}_{j}t}\left\langle \varphi_{j}^{(R)},\left|f\right|\right\rangle_{L^{2}(B_{\infty}(p_{\infty},R))}\varphi_{j}^{(R)}(x)\right)^{2}\dif m_{\infty}(x)\\ 
           {=}&\ \sum_{j=1}^{\infty}e^{-2\lambda^{(R)}_{j}t}\left\langle \varphi_{j}^{(R)},\left|f\right|\right\rangle_{L^{2}(B_{\infty}(p_{\infty},R))}^{2} \text{ (by Proposition \ref{p.orthn}) } \\
           \leq&\ \sum_{j=1}^{\infty}\left\langle \varphi_{j}^{(R)},\left|f\right|\right\rangle_{L^{2}(B_{\infty}(p_{\infty},R))}^{2}\leq \left\lVert f\right\rVert_{L^{2}(B_{\infty}(p_{\infty},R))}^{2}\text{ (by Bessel's Inequality) }
       \end{align*}
       which gives \eqref{e.qheat2}.
   \end{enumerate}
\end{proof}
We set $q^{(R)}(t,x,y)$ to zero if $x$ or $y$ outside $B_{\infty}(p_{\infty},R)$. By the monotonicity of $p^{(n,R)}$ with respect to $R$ for each $n$ {(see, for example, \cite[Theorem 2.12-(b)]{GT12})}, we can see that $0\leq q^{(R_{1})}(t,x,y)\leq q^{(R_{2})}(t,x,y)$ for all $t>0$, $0<R_{1}\leq R_{2}$ and $x,y\in X_{\infty}$, so that the limit
\begin{equation}\label{e.defq}
q(t,x,y):=\lim_{R\rightarrow\infty}q^{(R)}(t,x,y)=\lim_{\mathbb{Q}_{+}\ni R\rightarrow\infty}q^{(R)}(t,x,y)=\lim_{j\rightarrow\infty}q^{(R_{j})}(t,x,y).
\end{equation}is monotone and thus exists. We first prove that function $q(t,x,y)$ is conservative. 
\begin{proposition}\label{p.conser} The function $q$ is conservative, in the sense that:
    \begin{equation}
    \int_{X_{\infty}}q(t,x,y)\dif m_{\infty}(y)=1, \text{ for any $x\in X_{\infty}$, $t>0$.}\label{e.cons}
    \end{equation}
\end{proposition}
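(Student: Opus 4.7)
The plan is to reduce the conservativeness of $q$ to that of the approximating forms $(\mathcal{E}_{n_k},\mathcal{F}_{n_k})$, via monotone convergence in $R$, weak convergence of the pushforward measures, and uniform exit-time estimates coming from the heat kernel upper bound.

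First, since $q^{(R)}$ vanishes outside $B_{\infty}(p_{\infty},R)$ and $R\mapsto q^{(R)}(t,x,y)$ is non-decreasing, the definition \eqref{e.defq} together with the monotone convergence theorem yield
\[
\int_{X_{\infty}}q(t,x,y)\,dm_{\infty}(y)=\lim_{R\to\infty}\int_{B_{\infty}(p_{\infty},R)}q^{(R)}(t,x,y)\,dm_{\infty}(y).
\]
For each fixed $R$, I would identify this finite-ball integral as a limit along the approximating spaces. Mimicking the argument from Proposition~\ref{p.orthn}---using the uniform convergence \eqref{e.real}, the weak convergence of pushforward measures \eqref{e.vague}, the approximate isometries \eqref{e.iso3} and \eqref{e.iso5}, the H\"older regularity \eqref{e.hold1}, and the annular volume estimate Lemma~\ref{l.annu} to absorb the boundary---one obtains
\[
\int_{B_{\infty}(p_{\infty},R)}q^{(R)}(t,x,y)\,dm_{\infty}(y)=\lim_{k\to\infty}I_{k,R},
\]
where $I_{k,R}:=\int_{B_{n_k}(p_{n_k},R)}p^{(n_k,R)}(t,g_{n_k}(x),y)\,dm_{n_k}(y)$.

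The final step is to show $\lim_{R\to\infty}\inf_{k}I_{k,R}=1$. The upper bound $I_{k,R}\leq 1$ follows from conservativeness of $p_{n_k}$ together with the domination $p^{(n_k,R)}\leq p_{n_k}$. Conservativeness further gives
\[
1-I_{k,R}=\int_{X_{n_k}}\bigl(p_{n_k}-p^{(n_k,R)}\bigr)(t,g_{n_k}(x),y)\,dm_{n_k}(y),
\]
which is the probability that the diffusion associated with $(\mathcal{E}_{n_k},\mathcal{F}_{n_k})$ exits $B_{n_k}(p_{n_k},R)$ by time $t$. Splitting $X_{n_k}\setminus B_{n_k}(g_{n_k}(x),R/2)$ into dyadic annuli and applying the HKE upper bound together with volume doubling, this exit probability is bounded by $C\exp\bigl(-c\,t\,\Phi_{\Psi_{n_k}}(c' R/t)\bigr)$ with constants uniform in $k$ (thanks to \ref{lb.as1}--\ref{lb.as2}); since $d_{n_k}(g_{n_k}(x),p_{n_k})\to d_{\infty}(x,p_{\infty})$ and $\Psi_{n_k}\to\Psi_{\infty}$ uniformly on compact sets, this bound tends to $0$ as $R\to\infty$, uniformly in $k$. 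Combining with the first display yields \eqref{e.cons}.

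The main obstacle I anticipate is the pass-to-limit in the second step. The function $y\mapsto q^{(R)}(t,x,y)\one_{B_{\infty}(p_{\infty},R)}(y)$ is discontinuous at $\partial B_{\infty}(p_{\infty},R)$, and we are integrating against $dm_{\infty}$ rather than against any pushforward $(f_{n_k})_{\#}m_{n_k}$. Handling both requires squeezing between continuous cutoffs supported on slightly smaller and slightly larger balls and controlling the resulting boundary annuli via Lemma~\ref{l.annu}, together with the H\"older estimate from Proposition~\ref{p.holder} to absorb the distortion coming from $g_{n_k}\circ f_{n_k}\neq\operatorname{id}$.
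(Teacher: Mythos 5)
Your proposal is correct and follows essentially the same route as the paper: reduce to finite balls via monotone convergence, pass the finite-ball integral to the limit along the subsequence using the machinery already developed for Proposition~\ref{p.orthn} (weak convergence of pushforwards, uniform H\"older bounds, annulus estimates), and then recognize $1-I_{k,R}$ as the probability of exiting $B_{n_k}(p_{n_k},R)$ by time $t$, which is bounded uniformly in $k$ and vanishes as $R\to\infty$. The paper obtains the last step by invoking the exit-time estimate of \cite[Corollary 3.20]{GT12} directly, together with the centering trick $g_n(x)\in B_n(p_n,R/3)$ to compare the exit time from $B_n(p_n,R)$ to that from $B_n(g_n(x),R/2)$, whereas you re-derive an equivalent bound from the HKE upper bound by dyadic annuli; these are interchangeable. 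One very small point: the target ``$\lim_{R\to\infty}\inf_k I_{k,R}=1$'' isn't literally the quantity needed (one wants $\lim_R\lim_k I_{k,R}$), but since your lower bound on $I_{k,R}$ is uniform in $k$, the squeeze argument you describe does deliver the iterated limit, so this is only a cosmetic misstatement.
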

\begin{proof}
Fix $x\in X_{\infty}$ and choose $R$ large enough so that $g_{n}(x)\in B_{n}(p_{n},R/3)$ for all $n$ sufficiently large, say $R>4 d_{\infty}(p_{\infty},x)+1$. As $q^{(R)}(t,x,\cdot)$ is continuous on $B_{\infty}(p_{\infty},R)$, we can use the property of weak convergence and estimates the measure of annulus as the proof in Proposition \ref{p.orthn} and show that 
\begin{align}
        &\phantom{\ \leq}\int_{B_{\infty}(p_{\infty},R)}q^{(R)}(t,x,y)\dif m_{\infty}(y)=\lim_{n\rightarrow\infty}\int_{B_{n}(p_{n},R)}q^{(R)}(t,x,f_{n}(y))\dif m_{n}(y)\\
        &= \lim_{n\rightarrow\infty}\int_{B_{n}(p_{n},R)}q^{(R)}(t,x,f_{n}(y))-p^{(n,R)}(t,g_{n}(x),g_{n}(f_{n}(y))) \dif m_{n}(y)\\
        & \qquad + \lim_{n\rightarrow\infty}\int_{B_{n}(p_{n},R)}p^{(n,R)}(t,g_{n}(x),g_{n}(f_{n}(y)))-p^{(n,R)}(t,g_{n}(x),y)\dif m_{n}(y)\\
        & \qquad + \lim_{n\rightarrow\infty}\int_{B_{n}(p_{n},R)} p^{(n,R)}(t,g_{n}(x),y)\dif m_{n}(y).\label{e.hklow}
    \end{align}The first two terms vanish by \eqref{e.real} and the same argument as Proposition \ref{p.orthn}. The rest is to estimate the third term. By \cite[Corollary 3.20]{GT12} (see also \cite[Theorem 7.2]{GK17} and the computation in \cite[(5.22) in p.~549]{GH14}), since $g_{n}(x)\in B_{n}(p_{n},R/3)$, we see that for some $c,\gamma\in(0,\infty)$, 
    \begin{align}
    1&\geq\int_{B_{n}(p_{n},R)} p^{(n,R)}(t,g_{n}(x),y)\dif m_{n}(y)=\mathbb{P}^{g_{n}(x)}(t<\tau_{B_{n}(p_{n},R)})\\
&{\phantom{\geq}\begin{dcases}	
	 \geq \mathbb{P}^{g_{n}(x)}(t<\tau_{B_{n}(g_{n}(x),R/2)})\geq 1-c\exp\left(-\gamma\left({\frac{\Psi_{n}(R)}{t}}\right)^{1/(\beta^{\prime}-1)}\right)\ \text{if $R<\diam(X_{n},d_{n})$} \\
	=1 \ \text{if $\diam(X_{n},d_{n})<\infty$ and  $R\geq\diam(X_{n},d_{n})$, since $B_{n}(p_{n},R)=X_{n}$}
\end{dcases}}
\\
    &\geq 1-c\exp\left(-\gamma\left({\frac{\Psi_{n}(R)}{t}}\right)^{1/(\beta^{\prime}-1)}\right) \text{ for all $R\in(0,\infty)$}
    \end{align}
   { Here and in the following, for an open set $D\subset X_{n}$, $\tau_{D}$ is the \emph{first exit time from $D$} of the Hunt process on $X_{n}$ associated with the regular Dirichlet form $(\mathcal{E}_{n},\mathcal{F}_{n})$, see \cite[p.~1227]{GT12}.} Let $n\to\infty$, we see from \eqref{e.hklow} that for any $R>4 d_{\infty}(p_{\infty},x)+1$, \begin{equation}\label{e.comp}
    1\geq\int_{B_{\infty}(p_{\infty},R)}q^{(R)}(t,x,y)\dif m_{\infty}(y)\geq1-c\exp\left(-\gamma\left({\frac{\Psi_{\infty}(R)}{t}}\right)^{1/(\beta^{\prime}-1)}\right).
    \end{equation}
    Letting $R\to\infty$, we obtain \eqref{e.cons} since $\Psi_{\infty}(R)\to\infty$.
\end{proof}

To estimate the bounds of $q$, we need the following result on the convergence of $\Phi_{n}$.
\begin{lemma}\label{l.phicon}
Assume \ref{lb.as2} and \ref{lb.as3}. Let $\Phi_{n}:=\Phi_{\Psi_{n}}$, $1\leq n\leq\infty$ be defined by \eqref{e.phi}. Then\[\Phi_{n}\to\Phi_{\infty}\text{ converges uniformly on compact sets.}\]
\end{lemma}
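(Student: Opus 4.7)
The plan is to reduce the supremum defining $\Phi_{n}(s)$ to a supremum over $r$ in a compact interval $[r_{1},r_{2}]\subset(0,\infty)$ chosen independently of $n$, and then exploit the uniform convergence of $\Psi_{n}$ on $[r_{1},r_{2}]$. Fix a compact set $K=[0,s_{2}]\subset[0,\infty)$ and $\epsilon>0$, and write $G_{n}(s,r):=s/r-1/\Psi_{n}(r)$.

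By \ref{lb.as3}, the sequence $\{\Psi_{n}(1)\}_{1\leq n\leq\infty}$ is bounded above by some $M<\infty$ and below by some $m>0$. Applying \ref{lb.as2} with $R=1$, for $r\leq 1$ we have $\Psi_{n}(r)\leq C_{0}\Psi_{n}(1)r^{\beta}\leq C_{0}Mr^{\beta}$, so
\[G_{n}(s,r)\leq \frac{s}{r}-(C_{0}M)^{-1}r^{-\beta}\leq 0\]
for all $s\in K$, as soon as $r\leq r_{1}:=\min\bigl(1,(s_{2}C_{0}M)^{-1/(\beta-1)}\bigr)$ (this uses $\beta>1$ crucially). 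For $r\geq 1$, the same hypothesis gives $\Psi_{n}(r)\geq C_{0}^{-1}mr^{\beta}$, so $G_{n}(s,r)\in[-(C_{0}/m)r^{-\beta},\,s/r]$; since $\beta>1$, choosing $r_{2}\geq 1$ sufficiently large we can arrange $|G_{n}(s,r)|<\epsilon$ for all $n$, all $s\in K$, and all $r\geq r_{2}$. Setting $\Phi_{n}^{*}(s):=\sup_{r\in[r_{1},r_{2}]}G_{n}(s,r)$, these bounds together with $\Phi_{n}(s)\geq\lim_{r\to\infty}G_{n}(s,r)=0$ yield
\[|\Phi_{n}(s)-\Phi_{n}^{*}(s)|\leq \epsilon,\qquad s\in K,\ 1\leq n\leq\infty.\]

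On $[r_{1},r_{2}]$, \ref{lb.as2} provides a uniform positive lower bound on $\Psi_{n}$, so by \ref{lb.as3} the reciprocals $1/\Psi_{n}$ converge uniformly to $1/\Psi_{\infty}$ on $[r_{1},r_{2}]$; hence $G_{n}\to G_{\infty}$ uniformly on the compact product $K\times[r_{1},r_{2}]$, and taking the sup in $r$ preserves uniform convergence, giving $\Phi_{n}^{*}\to \Phi_{\infty}^{*}$ uniformly on $K$. Combining this with the sandwich inequality above (applied at both $n$ and $\infty$), we obtain $\limsup_{n\to\infty}\sup_{s\in K}|\Phi_{n}(s)-\Phi_{\infty}(s)|\leq 2\epsilon$; since $\epsilon>0$ was arbitrary, this is the claimed uniform convergence.

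The main obstacle is choosing $r_{1}$ and $r_{2}$ independently of $n$, which rests on the uniform scaling in \ref{lb.as2} together with the uniform two-sided control of $\{\Psi_{n}(1)\}$ inherited from \ref{lb.as3} via continuity of the bijection $\Psi_{\infty}$. Once the maximization is localized to a fixed compact interval of $r$, the passage to the limit is routine.
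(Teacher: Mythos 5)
Your proof is correct and works, but it takes a genuinely different route from the paper's at the two key points. For the large-$r$ cutoff, the paper argues that $G_{n}(s,r):=s/r-1/\Psi_{n}(r)$ is, for $r>C_{3}$, strictly below an explicit positive lower bound $C_{2}$ for $\Phi_{n}(s)$ (with $C_{2}$ depending on a strictly positive lower endpoint $m$ for $s$); you instead simply observe that $|G_{n}(s,r)|<\epsilon$ for $r\geq r_{2}$, which costs you an $\epsilon$-error in the reduction but requires no lower bound on $s$. Consequently, the paper's reduction to a compact $r$-interval only works on $[m,M]$ with $m>0$, and it must then invoke \cite[Lemma~2.10]{Mur20} to control $\Phi_{n}$ uniformly near $s=0$; your version handles $K=[0,s_{2}]$ in one stroke and is thus more self-contained (no external lemma), at the cost of an exact identity $\Phi_{n}=\Phi_{n}^{*}$ being replaced by a two-sided $\epsilon$-approximation. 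One small quantitative slip: from $\Phi_{n}\geq 0$, $\sup_{r\leq r_{1}}G_{n}\leq 0$, $\sup_{r\geq r_{2}}G_{n}\in[0,\epsilon)$ and $\Phi_{n}^{*}\geq G_{n}(s,r_{2})>-\epsilon$, what you actually get is $|\Phi_{n}(s)-\Phi_{n}^{*}(s)|<2\epsilon$, not $\leq\epsilon$; this does not affect the conclusion since $\epsilon$ is arbitrary, but the sandwich bound should carry the factor $2$.
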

\begin{proof}
Since $\Psi_{n}(1)\to\Psi_{\infty}(1)$ as $n\to\infty$, there is a positive number $C>0$ such that $C^{-1}\leq\Psi_{n}(1)\leq C$ for all $1\leq n\leq\infty$ and therefore
\[(C_{0}C)^{-1}(r^{\beta}\wedge r^{\beta^{\prime}})\leq\Psi_{j}(r)\leq (C_{0}C)(r^{\beta}\vee r^{\beta^{\prime}}),\ 1\leq j\leq\infty,\ r>0.\] Fix $0<m<M$. We first show that $\Phi_{n}\to\Phi_{\infty}$ uniformly on $[m,M]$. We may assume that $M>C$. For any $s\in[m,M]$, \[\Phi_{n}(s)\geq\sup_{r>0}\left({\frac{m}{r}-\frac{1}{(C_{0}C)^{-1}(r^{\beta}\wedge r^{\beta^{\prime}})}}\right):=C_{2}=C_{2}(m,C,C_{0},\beta,\beta^{\prime}).\]
Let $C_{3}:=C_{3}(m,M,C,C_{0},\beta,\beta^{\prime})=MC_{2}^{-1}$, we have for all $s\in [m,M]$ that
\begin{align}
\sup_{r>C_{3}}\left({\frac{s}{r}-\frac{1}{\Psi_{n}(r)}}\right)\leq{\sup_{r>C_{3}}}\frac{M}{r}&<C_{0}C(\beta-1)\left({\frac{m(C_{0}C)^{-1}}{\beta}}\right)^{\beta/(\beta-1)}\\
&\leq\Phi_{n}(s)=\sup_{r>0}\left({\frac{s}{r}-\frac{1}{\Psi_{n}(r)}}\right).\label{e.phic1}
\end{align}
Let $C_{4}:=C_{4}(C_{0},M,\beta,\beta^{\prime})=(C_{0}CM)^{-1/(\beta-1)}\vee(C_{0}CM)^{-1/(\beta^{\prime}-1)}$, we have
\begin{equation}\label{e.phic2}
\sup_{0<r<C_{4}}\left(\frac{s}{r}-\frac{1}{\Psi_{n}(r)}\right)\leq {\sup_{0<r<C_{4}}}\left(\frac{M}{r}-\frac{1}{(C_{0}C)^{-1}(r^{\beta}\vee r^{\beta^{\prime}})}\right)<0.
\end{equation}
Hence \eqref{e.phic1} and \eqref{e.phic2} give \[\Phi_{n}(s)=\sup_{C_{4}\leq r\leq C_{3}}\left({\frac{s}{r}-\frac{1}{\Psi_{n}(r)}}\right),\ s\in[m,M],\ 1\leq n<\infty.\] Since $\Psi_{n}\to \Psi_{\infty}$ uniformly on $[C_{4},C_{3}]$, we conclude that $\Phi_{n}\to\Phi_{\infty}$ uniformly on $[m,M]$, for every $0<m<M$. Next, we show that $\Phi_{n}\to\Phi_{\infty}$ uniformly on $[0,M]$, for every $M>0$. In fact, by \cite[Lemma 2.10]{Mur20}, for any $\epsilon>0$, we can find a $m>0$ small enough, independent of $n$, such that $\abs{\Phi_{n}(s)}\lesssim \epsilon$ for all $s\in[0,m]$ and $1\leq n\leq \infty$. Then choose $N$ large enough such that $\abs{\Phi_{n}(s)-\Phi_{\infty}(s)}\lesssim \epsilon$ for all $s\in[m,M]$ and all $n\geq N$. Therefore, the uniform convergence holds on $[0,M]$ for all $M>0$.
\end{proof}

\begin{theorem}\label{t.qehke}
    Let $q(t,x,y)$ be defined in \eqref{e.defq}.
\begin{enumerate}[label=\textup{(\arabic*)},align=right,leftmargin=*,topsep=5pt,parsep=0pt,itemsep=2pt]
        \item\label{it.qehk1} \textup{Measurability:} $q$ is a measurable function on $(0,\infty)\times X_{\infty}\times X_{\infty}$. {For each $t>0$, $q(t,\cdot,\cdot):X_{\infty}\times X_{\infty}\to\mathbb{R}$ is continuous.}
        \item\label{it.qehk2} \textup{Markovian and conservative property:} for any $x,y\in X_{\infty}$, $t>0$, $q(t,x,y)\geq0$ and \begin{equation}
        \int_{X_{\infty}}q(t,x,y)\dif m_{\infty}(y)=1.
        \end{equation}
        \item\label{it.qehk3} \textup{Symmetry:} for any $x,y\in X_{\infty}$, $t>0$, $q(t,x,y)=q(t,y,x)$.
        \item\label{it.qehk4} \textup{Semigroup property:} for all $t, s > 0$,\begin{equation}\label{e.hk.sg}
q(t+s,x,y)=\int_{X_{\infty}}q(t,x,z)q(s,z,y)\dif m_{\infty}(z)
        \end{equation}
        \item\label{lb.qehke} {\textup{Two-sided bound:}  there exists
		$C_{1},c_{2},c_{3}, \delta\in(0,\infty)$ such that for all $(t,x,y)\in (0,\infty)\times X_{\infty}\times X_{\infty}$, \begin{equation}
        	 \frac{c\one_{\{d_{\infty}(x,y)\leq \delta \Psi_{\infty}^{-1}(t)\}}}{m_{\infty}(B_{\infty}(x,\Psi_{\infty}^{-1}(t)))}\leq q(t,x,y)\leq \frac{C_{1}}{m_{\infty}(B_{\infty}(x,\Psi_{\infty}^{-1}(t)))}\exp\left(-c_{2}t\Phi_{\infty}\left(c_{3}\frac{d_{\infty}(x,y)}{t}\right)\right).
        \end{equation}}
    \end{enumerate}
\end{theorem}
\begin{proof}
\begin{enumerate}[label=\textup{(\arabic*)},align=right,leftmargin=*,topsep=5pt,parsep=0pt,itemsep=2pt]
    \item[\ref{it.qehk1}] Since $q^{(R)}$ is measurable by Proposition \ref{p.hs-r}-\ref{lb.hsr1}, $q$ is also measurable as it is the pointwise limit of $q^{(R)}$. {By letting $n\to\infty$ in \eqref{e.raeq1} and then letting $R\to\infty$, the continuity of $q(t,\cdot,\cdot)$ follows as the constants in \eqref{e.raeq1} are independent of $n$ and $R$.}
    \item[\ref{it.qehk2}] The positivity of $q(t,\cdot,\cdot)$ is obvious. That $q$ being conservative has been proved in Proposition \ref{p.conser}.
    \item[\ref{it.qehk3}] The symmetry of $q$ follows from the symmetry of $q^{(R)}$ in Proposition \ref{p.heat-r}.
    \item[\ref{it.qehk4}] Taking $R\rightarrow\infty$ on both sides of \eqref{e.qheat1}, and using the monotone convergence theorem, we have the semigroup property \eqref{e.hk.sg} of $q$.
    \item[\ref{lb.qehke}] By the upper bound of $p^{(n)}$,
    \begin{align*}
        q^{(n,R)}(t,x,y)&=p^{(n,R)}(t,g_{n}(x),g_{n}(y))\leq p^{(n)} (t,g_{n}(x),g_{n}(y))\\ &\lesssim \frac{C_{1}}{m_{n}(B_{n}(g_{n}(x),\Psi_{n}^{-1}(t)))}\exp\left(-c_{2}t\Phi_{n}\left(c_{3}\frac{d_{n}(g_{n}(x),g_{n}(y))}{t}\right)\right)\\&\leq \frac{C_{1}}{V_{n}(\Psi_{n}^{-1}(t)\wedge \diam (X_{n},d_{n}))}\exp\left(-c_{2}t\Phi_{n}\left(c_{3}\frac{d_{\infty}(x,y)-\epsilon_{n}}{t}\right)\right) 
    \end{align*}
Letting $n\rightarrow\infty$, by \eqref{e.minfty}, Lemmas \ref{l.diam} and \ref{l.phicon} we see that \[q^{(R)}(t,x,y)\lesssim \frac{C_{1}}{m_{\infty}(B_{\infty}(x,\Psi_{\infty}^{-1}(t)))}\exp\left(-c_{2}t\Phi_{\infty}\left(c_{3}\frac{d_{\infty}(x,y)}{t}\right)\right) \text{ for all } R>1.\]
Let $R\rightarrow\infty$ and use \eqref{e.defq} we obtain the upper bound of $q$. 

To show the lower bound, let $0<\delta<\delta^{\prime}$ be given later. Let $t>0$ and $x,y\in X_{\infty}$ with $d_{\infty}(x,y)\leq \delta \Psi_{\infty}^{-1}(t)$. Therefore $d_{\infty}(x,y)<\delta^{\prime} \Psi_{\infty}^{-1}(t)$ and we may find $N=N(t,x,y)$ large enough such that if $n>N$ then $d_{n}(g_{n}(x),g_{n}(y))<\delta^{\prime} \Psi_{n}^{-1}(t)$. Recall that under our assumption, $\hyperlink{HKE}{\mathrm{HKE}(\Psi_{n})}$ quantitatively implies the \emph{local lower estimate} $\mathrm{w}\text{-}\mathrm{LLE}(\Psi_{n})$ \cite[Theorem 3.1]{BGK12} {(The heat kernel bound $\mathrm{w}$-$\mathrm{HKE}$ considered in \cite[Theorem 3.1]{BGK12} is slightly different from our version $\hyperlink{HKE}{\mathrm{HKE}}$ in Definition~\ref{d:HKE}. Nevertheless, it is straightforward to adapt the proof in \cite[Section~4.1]{BGK12} to show that our $\hyperlink{HKE}{\mathrm{HKE}}$ quantitatively implies their $\mathrm{w}\text{-}\mathrm{LLE}$. In fact, it suffices to verify that the value $M$ defined in \cite[(4.3)]{BGK12} satisfies the same estimate as in \cite[p.~1107]{BGK12}. This follows from our $\hyperlink{HKE}{\mathrm{HKE}}$, together with \cite[(5.13)]{GK17}, the exponential decay of $\exp(-t^{-1})$ as $t\downarrow0^+$, and an argument analogous to that on \cite[p.~1106]{BGK12}. Or one can use \cite[Theorem 4.5 and Remark 4.6]{KM23} to see that $\hyperlink{HKE}{\mathrm{HKE}(\Psi)}$ implies the \emph{parabolic Harnack inequality} $\mathrm{PHI}(\Psi)$, which implies $\mathrm{w}\text{-}\mathrm{LLE}(\Psi)$ by \cite[Theorem 3.1]{BGK12})}. So there exist constants $\epsilon\in (0,1)$ and $c > 0$ such that
\[p^{B_{n}(g_{n}(x),\epsilon\Psi_{n}^{-1}(t))}(t,g_{n}(x),g_{n}(y))\geq\frac{c}{m_{n}(B_{n}(g_{n}(x),\Psi_{n}^{-1}(t)))}\gtrsim \frac{c}{V_{n}(\Psi_{n}^{-1}(t)\wedge\diam(X_{n},d_{n}))}\]
for all $t>0$ and all $d_{n}(g_{n}(x),g_{n}(y))<\epsilon\Psi_{n}^{-1}(t)$. We choose $\delta^{\prime}=\epsilon$ and $\delta=\epsilon/2$. For $R$ large enough such that $R>d_{\infty}(p_{\infty},x)+2\delta^{\prime} \Psi_{\infty}^{-1}(t)+1$, we have by triangle inequality that $g_{n}(y)\in B_{n}(g_{n}(x),\delta^{\prime} \Psi_{n}^{-1}(t))\subset B_{n}(p_{n},R)$.  By the monotonicity of $p^{(n,R)}$ with respect to $R$, we have 
\begin{align}
	q^{(n,R)}(t,x,y)=p^{(n,R)}(t,g_{n}(x),g_{n}(y))&\geq p^{B_{n}(g_{n}(x),\epsilon\Psi_{n}^{-1}(t))}(t,g_{n}(x),g_{n}(y))\\
	&\geq\frac{c}{V_{n}(\Psi_{n}^{-1}(t)\wedge\diam(X_{n},d_{n}))}.
\end{align}

Using Proposition \ref{p.rational} and letting $n\to\infty$ along some subsequence, we have $q^{(R)}(t,x,y)\geq\frac{c}{V_{\infty}(\Psi_{\infty}^{-1}(t))}$. Letting $R\to\infty$, we have the lower bound \[q(t,x,y)\geq \frac{c}{V_{\infty}(\Psi_{\infty}^{-1}(t)\wedge\diam(X_{\infty},d_{\infty}))}\gtrsim \frac{c}{{m_{\infty}(B_{\infty}(x,\Psi_{\infty}^{-1}(t)))}}.\]
\end{enumerate}
\end{proof}
\begin{theorem}\label{t.proQ}
    For each $t>0$ and each $f\in L^{\infty}(X_{\infty},m_{\infty})$, define \begin{equation}   
    Q_{t}f(x):=\int_{X_{\infty}}q(t,x,y)f(y)\dif m_{\infty}(y),\ x\in X_{\infty}.
    \end{equation}Then the family of operators $\{Q_{t}\}_{t>0}$ satisfies the following:
\begin{enumerate}[label=\textup{(\arabic*)},align=right,leftmargin=*,topsep=5pt,parsep=0pt,itemsep=2pt]
        \item\label{lb.q1} Each $Q_{t}$ extends to a bounded symmetric operator on ${L^{2}(X_{\infty},m_{\infty})}$.
        \item\label{lb.q2} \textup{Semigroup property:} $Q_{t}Q_{s}=Q_{t+s}$, $t,s>0$.
        \item\label{lb.q3} \textup{Contraction property:} $\lVert Q_{t}f\rVert_{L^{2}(X_{\infty},m_{\infty})}\leq \lVert f\rVert_{L^{2}(X_{\infty},m_{\infty})}$, $t>0$, $f\in L^{2}(X_{\infty},m_{\infty})$.
        \item\label{lb.q4} \textup{Strongly continuity:} $\lVert Q_{t}f-f\rVert_{L^{2}(X_{\infty},m_{\infty})}\rightarrow0$ as $t\rightarrow0$, for all $f\in L^{2}(X_{\infty},m_{\infty})$
        \item\label{lb.q5} \textup{Conservativity:}  $Q_{t}\one_{X_{\infty}}=\one_{X_{\infty}}$ for any $t>0$.
        \item\label{lb.q6}  \textup{Markovian property}: $0\leq Q_{t}f\leq1$ $m_{\infty}$-a.e. whenever $f\in L^{2}(X_{\infty},m_{\infty})$, $0\leq f\leq1$ $m_{\infty}$-a.e.
    \end{enumerate}
\end{theorem}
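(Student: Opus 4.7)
Properties \ref{lb.q1}--\ref{lb.q3}, \ref{lb.q5}, and \ref{lb.q6} are largely mechanical consequences of Theorem \ref{t.qehke}; only the strong continuity \ref{lb.q4} requires real work. The plan is to dispatch the routine items first and then concentrate on \ref{lb.q4}.

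For \ref{lb.q3}, apply pointwise Cauchy--Schwarz together with conservativity: $|Q_t f(x)|^2 \leq \bigl(\int q(t,x,y)\,dm_\infty(y)\bigr)\bigl(\int q(t,x,y)f(y)^2\,dm_\infty(y)\bigr) = Q_t(f^2)(x)$. Integrating and using Fubini together with the symmetry and conservativity of $q$ yields $\lVert Q_t f\rVert_{L^2(X_\infty,m_\infty)}^2 \leq \lVert f\rVert_{L^2(X_\infty,m_\infty)}^2$. This forces $Q_t$ to be well-defined on all of $L^2(X_\infty,m_\infty)$, and symmetry of $Q_t$ then follows from another Fubini together with $q(t,x,y)=q(t,y,x)$, giving \ref{lb.q1}. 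The semigroup law \ref{lb.q2} is a direct consequence of \eqref{e.hk.sg} and Fubini (first for $f\geq 0$ by Tonelli, then by linearity). Property \ref{lb.q5} is just the conservativity of $q$, and \ref{lb.q6} follows from $q\geq 0$ combined with \ref{lb.q5}: if $0\leq f\leq 1$ then $0\leq Q_t f\leq Q_t \mathbf{1}_{X_\infty} = \mathbf{1}_{X_\infty}$.

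The main obstacle is the strong continuity \ref{lb.q4}. Since $\{Q_t\}_{t>0}$ is uniformly bounded by \ref{lb.q3} and $C_c(X_\infty)$ is dense in $L^2(X_\infty,m_\infty)$ (because $X_\infty$ is proper, hence locally compact, and $m_\infty$ is Radon), a $3\epsilon$-argument reduces the task to showing $\lVert Q_t f - f\rVert_{L^2(X_\infty,m_\infty)}\to 0$ as $t\downarrow 0$ for $f\in C_c(X_\infty)$. Using conservativity, write
\begin{equation*}
Q_t f(x) - f(x) = \int_{X_\infty} q(t,x,y)\bigl(f(y)-f(x)\bigr)\,dm_\infty(y).
\end{equation*}
Split at $d_\infty(x,y)\lessgtr \delta$: the near piece is controlled by the modulus of continuity of $f$, and the far piece is bounded by $2\lVert f\rVert_\infty \int_{d_\infty(x,y)\geq \delta} q(t,x,y)\,dm_\infty(y)$. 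The $\HKE(\Psi_\infty)$ upper bound from Theorem \ref{t.qehke}-\ref{lb.qehke} combined with Lemma \ref{l.phicon} and the polynomial bounds \ref{lb.as2} forces this tail to vanish as $t\downarrow 0$ uniformly in $x$ on any bounded set, via an escape-probability estimate analogous to the one used in Proposition \ref{p.conser} (the exponent $t\Phi_\infty(c_3\delta/t)$ grows like $t^{-1/(\beta^\prime-1)}$). This gives pointwise convergence $Q_t f(x)\to f(x)$ together with a uniform-on-compacta bound.

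To upgrade this to $L^2$ convergence, fix $R$ with $\supp(f)\subset B_\infty(p_\infty,R)$ and split the integration domain. On $B_\infty(p_\infty,2R)$, which has finite measure, the integrand $|Q_t f - f|^2$ is bounded by $4\lVert f\rVert_\infty^2$ and converges pointwise to $0$, so dominated convergence handles this piece. Outside $B_\infty(p_\infty,2R)$ we have $f=0$, so $|Q_t f|^2 \leq Q_t(f^2)$ by pointwise Cauchy--Schwarz; integrating and exchanging order by Fubini (using symmetry of $q$) reduces the estimate to $\int f(y)^2 \bigl(\int_{d_\infty(p_\infty,x)\geq 2R} q(t,x,y)\,dm_\infty(x)\bigr)\,dm_\infty(y)$, and since $y\in \supp(f)\subset B_\infty(p_\infty,R)$ forces $d_\infty(x,y)\geq R$ in the inner integral, the same off-diagonal tail bound makes this piece vanish as $t\downarrow 0$. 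Combining the two pieces establishes \ref{lb.q4}.
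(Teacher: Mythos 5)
Your proposal is correct, and the overall skeleton (reduce to $C_c$ functions by density and uniform contractivity, then split near/far in $y$ using conservativity) is the same as the paper's, but two of your component arguments take genuinely different routes. For the contraction property you use the pointwise Cauchy--Schwarz bound $|Q_t f(x)|^2 \le Q_t(f^2)(x)$ together with $\int Q_t(f^2)\,dm_\infty = \int f^2\,dm_\infty$ (Tonelli, symmetry, conservativity); the paper instead writes $\lVert Q_t f\rVert_{L^2}^2$ as a monotone limit over balls and invokes the ball-level contraction \eqref{e.qheat2}. Your route is more self-contained and avoids passing back through $q^{(R)}$. For strong continuity, the paper directly bounds $\lVert Q_t f - f\rVert_{L^2}^2$ by a four-term split ($I_1,\dots,I_4$) and estimates each term; you instead first establish pointwise convergence $Q_t f(x)\to f(x)$ (uniform on bounded sets) and then upgrade to $L^2$ via dominated convergence on a large ball $B_\infty(p_\infty,2R)$ plus a tail estimate outside it using again $|Q_t f|^2\le Q_t(f^2)$ and Fubini. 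This pointwise-then-DCT structure is cleaner and arguably shorter, and the final tail reduction $\int f(y)^2\int_{B_\infty(p_\infty,2R)^c}q(t,y,x)\,dm_\infty(x)\,dm_\infty(y)$ is essentially the paper's $I_4$ estimate. One small imprecision: your justification of the far-piece tail $\int_{d_\infty(x,y)\ge\delta}q(t,x,y)\,dm_\infty(y)\to 0$ appeals to ``an escape-probability estimate analogous to Proposition \ref{p.conser}''; at this stage no Hunt process for $q$ has been constructed, so what you actually need is the analytic tail integral bound obtained from the $\HKE(\Psi_\infty)$ upper bound via a dyadic annulus decomposition and \cite[Lemma 2.10]{Mur20}, exactly as in the paper's display \eqref{e.tail}. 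With that reference corrected, the argument is complete.
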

\begin{proof}
\begin{enumerate}[label=\textup{(\arabic*)},align=right,leftmargin=*,topsep=5pt,parsep=0pt,itemsep=2pt]
    \item[\ref{lb.q1}] {That $Q_{t}$ extends to a bounded symmetric operator on ${L^{2}(X_{\infty},m_{\infty})}$}
    follows from the proof of contraction property, see \ref{lb.q3} below. The symmetry of $Q_{t}$ follows directly from the symmetry of $q$.
    \item[\ref{lb.q2}] The semigroup property follows from \eqref{e.hk.sg}.
    \item[\ref{lb.q3}] For any $f\in L^{2}(X_{\infty},m_{\infty})\cap L^{\infty}(X_{\infty},m_{\infty})$, by the monotone convergence theorem,
    \begin{align*}
        \lVert Q_{t}f\rVert^{2}_{L^{2}(X_{\infty},m_{\infty})}&=\int_{X_{\infty}}\left(\int_{X_{\infty}}q(t,x,y)f(y)\dif m_{\infty}(y)\right)^{2} \dif m_{\infty}(x)\\
        &\leq \int_{X_{\infty}}\left(\int_{X_{\infty}}q(t,x,y)\left|f(y)\right|\dif m_{\infty}(y)\right)^{2} \dif m_{\infty}(x)\\
        &=\lim_{R\rightarrow\infty}\int_{B_{\infty}(p_{\infty},R)}\left(\int_{B_{\infty}(p_{\infty},R)}q^{(R)}(t,x,y)\left|f(y)\right|\dif m_{\infty}(y)\right)^{2} \dif m_{\infty}(x)\\
        &\overset{\eqref{e.qheat2}}{\leq} \lVert f\rVert_{L^{2}(X_{\infty},m_{\infty})}^{2}.
    \end{align*}
   So we may extend $Q_{t}$ to a bounded (and contractive) symmetric operator on the Hilbert space ${L^{2}(X_{\infty},m_{\infty})}$.
    \item[\ref{lb.q4}] We first show that $\lVert Q_{t}f-f\rVert_{L^{2}(X_{\infty},m_{\infty})}\rightarrow0$ as $t\rightarrow0$ holds for all $f\in C_{c}(X_{\infty})$. We may assume that $\supp(f)\subset B_{\infty}(p_{\infty},R/2)\subset B_{\infty}(p_{\infty},R)$. Since $f$ is uniformly continuous, for each $\epsilon>0$, there exists a $\delta>0$ such that $\left|f(x)-f(y)\right|<\epsilon$ whenever $d_{\infty}(x,y)\leq\delta$. By the conservativeness of $q$ in Proposition \ref{p.conser} and the H\"{o}lder's inequality,
        \begin{align*}
   & \phantom{\leq\ }\int_{X_{\infty}}\left|Q_{t}f(x)-f(x)\right|^{2}\dif m_{\infty}(x)\\&\leq\int_{X_{\infty}}\left(\int_{X_{\infty}}q(t,x,y)\left|f(x)-f(y)\right|\dif m_{\infty}(y)\right)^{2}\dif m_{\infty}(x)\\
    &\leq\int_{X_{\infty}}\int_{X_{\infty}}q(t,x,y)\left|f(x)-f(y)\right|^{2}\dif m_{\infty}(y)\dif m_{\infty}(x)\leq\sum_{j=1}^{4}I_{j}(t)
    \end{align*}
    where 
    \begin{align*}
        I_{1}(t)&:= \int_{B_{\infty}(p_{\infty},R)}\int_{B_{\infty}(p_{\infty},R)\cap B_{\infty}(x,\delta)} q(t,x,y)\left|f(x)-f(y)\right|^{2}\dif m_{\infty}(y)\dif m_{\infty}(x),\\
        I_{2}(t) &:= \int_{B_{\infty}(p_{\infty},R)}\int_{B_{\infty}(p_{\infty},R)\setminus B_{\infty}(x,\delta)} q(t,x,y)\left|f(x)-f(y)\right|^{2}\dif m_{\infty}(y)\dif m_{\infty}(x),\\
        I_{3}(t) &:= \int_{B_{\infty}(p_{\infty},R)}\int_{X_{\infty}\setminus B_{\infty}(p_{\infty},R)} q(t,x,y)\left|f(x)-f(y)\right|^{2}\dif m_{\infty}(y)\dif m_{\infty}(x),\\
        I_{4}(t) &:= \int_{X_{\infty}\setminus B_{\infty}(p_{\infty},R)}\int_{B_{\infty}(p_{\infty},R)} q(t,x,y)\left|f(x)-f(y)\right|^{2}\dif m_{\infty}(y)\dif m_{\infty}(x).
    \end{align*}
    By the uniform continuity of $f$ and Proposition \ref{p.conser}, \begin{equation}\label{e.divid1}
        I_{1}(t)\leq \epsilon^{2}\int_{B_{\infty}(p_{\infty},R)}\left(\int_{B_{\infty}(p_{\infty},R)\cap B_{\infty}(x,\delta)} q(t,x,y)\dif m_{\infty}(y)\right)\dif m_{\infty}(x)\lesssim V_{\infty}(R)\epsilon^{2}.
    \end{equation}
        By \cite[Lemma 3.19]{GT12}, \[q(t,x,y)\lesssim \frac{1}{V_{\infty}(\Psi_{\infty}^{-1}(t)\wedge\diam(X_{\infty},d_{\infty}))}\exp(-c\Psi_{\infty}(\delta)^{1/(\beta^{\prime}-1)}t^{-1/(\beta^{\prime}-1)})\] if $d_{\infty}(x,y)\geq\delta$ and $t$ is small, we have 
        {\small \begin{equation}\label{e.divid2}
            \begin{aligned}
         &\phantom{\ \leq}   I_{2}(t)\\
         &\lesssim \frac{V_{\infty}(R\wedge\diam(X_{\infty},d_{\infty}))^{2}}{V_{\infty}(\Psi_{\infty}^{-1}(t)\wedge\diam(X_{\infty},d_{\infty}))}\exp(-c\frac{\Psi_{\infty}(\delta)^{\frac{1}{\beta^{\prime}-1}}}{t^{\frac{1}{\beta^{\prime}-1}}})\lVert f\rVert^{2}_{L^{\infty}(X_{\infty},m_{\infty})}\\
            &\lesssim V_{\infty}(R\wedge\diam(X_{\infty},d_{\infty}))\left(\frac{R\wedge\diam(X_{\infty},d_{\infty})}{\Psi_{\infty}^{-1}(t)\wedge\diam(X_{\infty},d_{\infty})}\right)^{\alpha^{\prime}}\exp(-c\frac{\Psi_{\infty}(\delta)^{\frac{1}{\beta^{\prime}-1}}}{t^{\frac{1}{\beta^{\prime}-1}}})\lVert f\rVert^{2}_{L^{\infty}(X_{\infty},m_{\infty})}\\
           & \to0\ (t\to 0^{+}).
    \end{aligned}
    \end{equation}}
    
By \cite[Lemma 2.10]{Mur20}, we have $t\Phi_{\infty}(s/t)\gtrsim (s^{\beta^{\prime}}/t)^{1/(\beta^{\prime}-1)}$ whenever $s\geq t$.  Therefore, if $t\leq\Psi_{\infty}(R/2){\wedge \Psi_{\infty}(\diam (X_{\infty},d_{\infty}))}$, 
    \begin{align}
   & \phantom{\lesssim }\int_{X_{\infty}\setminus B_{\infty}(x,R)} q(t,x,y)\dif m_{\infty}(y)\\
    &\lesssim \frac{1}{V_{\infty}(\Psi_{\infty}^{-1}(t))}\int_{B_{\infty}(x,R/2)^{c}} \exp\left({-ct\Phi_{\infty}\left({\frac{d_{\infty}(x,y)}{t}}\right)}\right)\dif m_{\infty}(y)\\
    &\lesssim\frac{1}{V_{\infty}(\Psi_{\infty}^{-1}(t))}\int_{B_{\infty}(x,R/2)^{c}} \exp\left(-c\left(\frac{d_{\infty}(x,y)^{\beta^{\prime}}}{t}\right)^{1/(\beta^{\prime}-1)}\right)\dif m_{\infty}(y)\\
    &=\frac{1}{V_{\infty}(\Psi_{\infty}^{-1}(t))}\sum_{j=0}^{\infty}\int_{B_{\infty}(x,2^{j}R)\setminus B_{\infty}(x,2^{j-1}R)}\exp\left(-c\left(\frac{d_{\infty}(x,y)^{\beta^{\prime}}}{t}\right)^{1/(\beta^{\prime}-1)}\right)\dif m_{\infty}(y)\\
    &\lesssim  \sum_{j=0}^{\infty}\left( \frac{2^{j-1}R}{t^{1/\beta^{\prime}}}\right)^{\alpha^{\prime}}\exp\left(-\frac{c}{2}\left(\frac{2^{j-1}R}{t^{1/\beta^{\prime}}}\right)^{\beta^{\prime}/(\beta^{\prime}-1)}\right)\lesssim \exp\left(-\frac{c}{2}\left(\frac{R^{\beta^{\prime}}}{t}\right)^{1/(\beta^{\prime}-1)}\right).\label{e.tail}
    \end{align}
Therefore, for $t\leq\Psi_{\infty}(R/2)\wedge \Psi_{\infty}(\diam (X_{\infty},d_{\infty}))$, 
since we have assumed that $f\in C_{c}(X_{\infty})$ and that $\supp(f)\subset B_{\infty}(p_{\infty},R/2)\subset B_{\infty}(p_{\infty},R)$, we have
{\small 
\begin{align}I_{3}(t)
&=\int_{B_{\infty}(p_{\infty},R)}\int_{X_{\infty}\setminus B_{\infty}(p_{\infty},R)} q(t,x,y)\abs{f(x)-f(y)}^{2}\dif m_{\infty}(y)\dif m_{\infty}(x)\\
  &=\int_{B_{\infty}(p_{\infty},R)}\int_{X_{\infty}\setminus B_{\infty}(p_{\infty},R)} q(t,x,y)\left|f(x)\right|^{2}\dif m_{\infty}(y)\dif m_{\infty}(x) \text{ (as $\restr{f}{X_{\infty}\setminus B_{\infty}(p_{\infty},R/2)}=0$)}  \\
  &=\int_{B_{\infty}(p_{\infty},R/2)}\left|f(x)\right|^{2}\int_{X_{\infty}\setminus B_{\infty}(p_{\infty},R)} q(t,x,y)\dif m_{\infty}(y)\dif m_{\infty}(x) \text{\ (as $\restr{f}{X_{\infty}\setminus B_{\infty}(p_{\infty},R/2)}=0$)}  \\
    &\leq \int_{B_{\infty}(p_{\infty},R/2)}\left|f(x)\right|^{2}\int_{X\setminus B_{\infty}(x,R/2)} q(t,x,y)\dif m_{\infty}(y)\dif m_{\infty}(x)\\
    &\qquad \text{\ (since $X_{\infty}\setminus B_{\infty}(p_{\infty},R)\subset X\setminus B_{\infty}(x,R/2)$ for any $x\in B_{\infty}(p_{\infty},R/2))$ }\\
    &\overset{\eqref{e.tail}}{\lesssim}  V_{\infty}(R/2) \lVert f\rVert^{2}_{L^{\infty}(X_{\infty},m_{\infty})}\exp\left(-c\left(\frac{R^{\beta^{\prime}}}{t}\right)^{1/(\beta^{\prime}-1)}\right)\to 0\ \text{(when $t\to0^{+}$)} .
    \label{e.divid3}
     \end{align}
     }
By Fubini's theorem, \begin{align}
        I_{4}(t)&\lesssim \int_{X_{\infty}\setminus B_{\infty}(p_{\infty},R)}\left(\int_{B_{\infty}(p_{\infty},R)} q(t,x,y)\left|f(y)\right|^{2}\dif m_{\infty}(y)\right)\dif m_{\infty}(x)\\
        &=\int_{B_{\infty}(p_{\infty},R)}\left|f(y)\right|^{2}\left(\int_{X_{\infty}\setminus B_{\infty}(p_{\infty},R)}q(t,x,y)\dif m_{\infty}(x)\right)\dif m_{\infty}(y)\\
        &\overset{\eqref{e.tail}}{\lesssim}  \lVert f\rVert^{2}_{L^{2}(X_{\infty},m_{\infty})}\exp\left(-c\left(\frac{R^{\beta^{\prime}}}{t}\right)^{1/(\beta^{\prime}-1)}\right)\to 0\ (t\to0^{+}).\label{e.divid4}
    \end{align}
    
    Combining \eqref{e.divid1}, \eqref{e.divid2}, \eqref{e.divid3}, \eqref{e.divid4} and letting $t\rightarrow0$ we see that \[\limsup_{t\rightarrow0}\lVert Q_{t}f-f\rVert_{L^{2}(X_{\infty},m_{\infty})}^{2}\lesssim V_{\infty}(R) \epsilon^{2}.\] Since $\epsilon$ is arbitrary, we conclude that $\lim_{t\rightarrow0}\lVert Q_{t}f-f\rVert_{L^{2}(X_{\infty},m_{\infty})}=0$.
    
    For general $f\in L^{2}(X_{\infty},m_{\infty})$, given any $\epsilon>0$, we can find $g\in C_{c}(X_{\infty})$ such that $\lVert f-g\rVert_{L^{2}(X_{\infty},m_{\infty})}^{2}\leq\epsilon$. By the contraction property, $\lVert Q_{t}f-Q_{t}g\rVert_{L^{2}(X_{\infty},m_{\infty})}^{2}\leq \lVert f-g\rVert_{L^{2}(X_{\infty},m_{\infty})}^{2}\leq\epsilon$. Thus by triangle inequality, $\limsup_{t\rightarrow0}\lVert Q_{t}f-f\rVert_{L^{2}(X_{\infty},m_{\infty})}^{2}\leq 2\epsilon+\limsup_{t\rightarrow0}\lVert Q_{t}g-g\rVert_{L^{2}(X_{\infty},m_{\infty})}^{2}=2\epsilon$. Again, since $\epsilon$ is arbitrary, we conclude that $\lim_{t\rightarrow0}\lVert Q_{t}f-f\rVert_{L^{2}(X_{\infty},m_{\infty})}=0$.
    \item[\ref{lb.q5}] This follows from Proposition \ref{p.conser}.
    \item[\ref{lb.q6}] This follows immediately from the fact that $q\geq0$ and the conservative property.
\end{enumerate}
\end{proof}
Now we can prove the existence of the Dirichlet form on the limit space:
\begin{proof}[Proof of Theorem \ref{t.main}-\ref{lb.hke}]
\begin{enumerate}[label=\textup{({\roman*})},align=right,leftmargin=*,topsep=5pt,parsep=0pt,itemsep=2pt]
\item By the general theory of Dirichlet forms \cite[Chapter 1]{FOT11}, the Markovian strongly continuous semigroup $\{Q_{t}\}_{t>0}$ corresponds to a unique Dirichlet form, which is denoted by $(\mathcal{E}_{\infty},\mathcal{F}_{\infty})$.
\item The Dirichlet form $(\mathcal{E}_{\infty},\mathcal{F}_{\infty})$ is symmetric and conservative since $\{Q_{t}\}$ is. The heat kernel bound $\hyperlink{HKE}{\mathrm{HKE}(\Psi_{\infty})}$ is given in Theorem \ref{t.qehke}-\ref{lb.qehke}.
\item Since $\{Q_{t}\}$ is conservative and admits two-sided  estimates by Theorem \ref{t.qehke}-\ref{lb.qehke}, we can apply the proof of \cite[Proposition 3.2]{Lie15} and see that the set $\mathcal{C}:=\{Q_{t}f:f\in C_{0}(X_{\infty})\cap L^{2}(X_{\infty},m_{\infty}), t>0\}$ is a \emph{core} of $(\mathcal{E}_{\infty}, \mathcal{F}_{\infty})$. In fact, {by Theorem \ref{t.qehke},} \cite[Proposition 3.2]{Lie15} and \cite[Lemma 1.3.3 (i) and (iii)]{FOT11}, we see that $\mathcal{C}\subset C_{0}(X_{\infty})\cap\mathcal{F}_{\infty}$, $\mathcal{C}$ is dense in $C_{0}(X_{\infty})$ in the uniform norm and dense in $C_{0}(X_{\infty})\cap \mathcal{F}_{\infty}$ in $\mathcal{E}_{\infty,1}$. We now prove that $C_{0}(X_{\infty})\cap \mathcal{F}_{\infty}$ is dense in $\mathcal{F}_{\infty}$ in the $\mathcal{E}_{\infty,1}$ norm, so $\mathcal{C}$ is also dense in $\mathcal{F}_{\infty}$ in the $\mathcal{E}_{\infty,1}$ norm. By \cite[Lemma 1.3.3 (iii)]{FOT11}, it suffices to prove that $Q_{t}(L^{2}(X_{\infty},m_{\infty}))\subset C_{0}(X_{\infty})\cap\mathcal{F}_{\infty}$ for each $t>0$. 

From \cite[Lemma 1.3.3 (i)]{FOT11}, we know that $Q_{t}(L^{2}(X_{\infty},m_{\infty}))\subset \mathcal{F}_{\infty}$. Fix $\epsilon>0$, $f\in L^{2}(X_{\infty},m_{\infty})$ and let $R>0$ large enough such that $\lVert f\rVert_{L^{2}(X_{\infty}\setminus B_{\infty}(p_{\infty},R),m_{\infty})}\leq\epsilon$, then 
\begin{align}
\abs{Q_{t}f(z)}&\leq \int_{B_{\infty}(p_{\infty},R)}\abs{f(y)}q(t,z,y)\dif m_{\infty}(y)+\int_{X_{\infty}\setminus B_{\infty}(p_{\infty},R)}\abs{f(y)}q(t,z,y)\dif m_{\infty}(y)\\
&\leq \int_{B_{\infty}(p_{\infty},R)}\abs{f(y)}q(t,z,y)\dif m_{\infty}(y)+\epsilon q(2t,z,z)^{1/2}
\end{align}
The first term can be made small if $d_{\infty}(p_{\infty},z)$ is large enough by $\hyperlink{HKE}{\mathrm{HKE}(\Psi_{\infty})}$, so $Q_{t}f\in C_{0}(X_{\infty})$ for each $t>0$. This completes the proof that $\mathcal{C}$ is a {core}.
\item 
Let $f, g\in \mathcal{F}_{\infty}$ that have compact support. Let $K$ be a compact set such that $\supp(g)\subset K$ and assume that $f$ is constant on an open set $U$ that contains $K$. By \cite[(4.5.7)]{FOT11} and the conservativeness of $\{Q_{t}\}$, we have
\begin{align}
\mathcal{E}_{\infty}(f,g)&=\lim_{t\to0}\frac{1}{2t}\int_{X_{\infty}}\int_{X_{\infty}}(f(x)-f(y))(g(x)-g(y))q(t,x,y)\dif m_{\infty}(x)\dif m_{\infty}(y)\\
&=\lim_{t\to0}\frac{1}{2t}\Big(\int_{X_{\infty}\setminus U}\int_{K}(f(x)-f(y))(g(x)-g(y))q(t,x,y)\dif m_{\infty}(x)\dif m_{\infty}(y)\\
&\quad+\int_{K}\int_{X_{\infty}\setminus U}(f(x)-f(y))(g(x)-g(y))q(t,x,y)\dif m_{\infty}(x)\dif m_{\infty}(y)\Big).\label{e.stronglocal1}
\end{align}
By the bound of $q$, we have for $\delta:=\dist(K,X_{\infty}\setminus U)$ that, for $t\in(0,1)$, \begin{align}
&\phantom{\leq\ }\int_{X_{\infty}\setminus U}\int_{K}(f(x)-f(y))(g(x)-g(y))q(t,x,y)\dif m_{\infty}(x)\dif m_{\infty}(y)\\
&\lesssim \frac{\exp(-\frac{c}{2}\left(\Psi_{\infty}(\delta)/{t}\right)^{\frac{1}{\beta^{\prime}-1}})}{V_{\infty}(\Psi_{\infty}^{-1}(t))}\int_{X_{\infty}\setminus U}\int_{K}\frac{(f(x)-f(y))(g(x)-g(y))}{\exp(c^{\prime}d_{\infty}(x,y)^{{\beta^{\prime}}/{(\beta^{\prime}-1})})}\dif m_{\infty}(x)\dif m_{\infty}(y)\\
&\to 0 \quad(t\downarrow0)
\end{align}
Similarly we can estimate the second term in \eqref{e.stronglocal1}. Therefore $\mathcal{E}_{\infty}(f,g)=0$. By \cite[Theorem 2.4.3]{CF12}, $(\mathcal{E}_{\infty},\mathcal{F}_{\infty})$ is strongly local.
\end{enumerate}
\end{proof}

\section{Mosco convergence of Dirichlet forms}\label{s.mosco}

In this section, we will show that along some subsequence, the Dirichlet form $\left({\mathcal{E}_{\infty}}, \mathcal{F}_{\infty}\right)$ is the Mosco limit of $\left(\mathcal{E}_{n},{\mathcal{F}_{n}}\right)$ as $n \rightarrow \infty$. For each $1\leq n\leq \infty$, we identify $(\mathcal{E}_{n},{\mathcal{F}_{n}})$ with the quadratic energy form $\mathcal{E}_{n}$ on $L^{2}(X_{n},m_{n})$ defined by
\begin{equation}
\mathcal{E}_{n}(u):=
\begin{dcases}
\mathcal{E}_{n}(u,u) &\text{if $u\in\mathcal{F}_{n}$,}\\
\infty& \text{if $u\in L^{2}(X_{n},m_{n})\setminus\mathcal{F}_{n}$.}
\end{dcases}
\end{equation}

The following definition, from \cite{KS03}, extends the $L^{2}$ convergence of functions to varying spaces.
\begin{definition}\label{d.l2}
Assume that the Hilbert spaces $L^{2}(X_{n},m_{n})$ converge to $L^{2}(X_{\infty},m_{\infty})$ (see Definition \ref{d.conhil}).
\begin{enumerate}[label=\textup{(\arabic*)},align=right,leftmargin=*,topsep=5pt,parsep=0pt,itemsep=2pt]
\item A sequence $\left\{u_{n}\right\}_{1 \leq n<\infty}$ of real functions with $u_{n} \in L^{2}\left(X_{n}, m_{n}\right)$ is said to \emph{$L^{2}$ converges} to a real function $u \in L^{2}\left(X_{\infty}, m_{\infty}\right)$, or \emph{$u_{n}\to u$ in $L^{2}$}, if there exists a sequence of functions $\left\{v_{j}\right\}_{1 \leq j<\infty} \subset  \mathcal{C}$ converges to $u$ in $L^{2}\left(X_{\infty}, m_{\infty}\right)$ such that\[\lim _{j \rightarrow \infty} \limsup _{n \rightarrow \infty}\left\|\upphi_{n} v_{j}-u_{n}\right\|_{L^{2}\left(X_{n}, m_{n}\right)}=0.\]

\item A sequence $\left\{u_{n}\right\}_{1 \leq n<\infty}$ of real functions with $u_{n} \in L^{2}\left(X_{n}, m_{n}\right)$ is said to \emph{$L^{2}$ weakly converges} to a real function $u \in L^{2}\left({X}_{\infty}, m_{\infty}\right)$ if\[\lim _{n \rightarrow \infty}\left\langle u_{n}, w_{n}\right\rangle_{L^{2}\left(X_{n}, m_{n}\right)}=\langle u, w\rangle_{L^{2}\left(X_{\infty}, m_{\infty}\right)}\] for any sequence $\left\{w_{n}\right\}$ converges to $w$ {in $L^{2}$}.
\item Let $A_{n}$ be a bounded linear operator on $L^{2}(X_{n},m_{n})$, $1\leq n\leq\infty$. We say $A_{n}\to A_{\infty}$ in \emph{strong operator topology} if $A_{n}u_{n}\to A_{\infty}u_{\infty}$ in $L^{2}$ whenever $u_{n}\to u_{\infty}$ in $L^{2}$.
\end{enumerate}
\end{definition}
The following definition of Mosco convergence was originally due to Mosco \cite[Definition 2.1.1]{Mos94} for a fixed space and developed to varying spaces by Kuwae and Shioya \cite[Definition 2.11]{KS03}. 
\begin{definition}[Mosco Convergence]\label{d.mc}
We say that quadratic forms $\mathcal{E}_{n}$ on $L^{2}\left(X_{n}, m_{n}\right)$ \emph{Mosco converge} to a quadratic form $\mathcal{E}_{\infty}$ on $L^{2}\left(X_{\infty}, m_{\infty}\right)$ if the following holds:
\begin{enumerate}[label=\textup{(\arabic*)},align=right,leftmargin=*,topsep=5pt,parsep=0pt,itemsep=2pt]
 \item If a sequence $\left\{u_{n}\right\}_{1 \leq n<\infty}$ of real functions with $u_{n} \in L^{2}\left(X_{n}, m_{n}\right)$ is $L^{2}$ weakly converge to $u_{\infty}\in L^{2}\left(X_{\infty}, m_{\infty}\right)$, then\[\mathcal{E}_{\infty}\left(u_{\infty}\right) \leq \liminf _{n \rightarrow \infty} \mathcal{E}_{n}\left(u_{n}\right)\]
 \item  For any function $u$ in $L^{2}\left(X_{\infty}, m_{\infty}\right)$, there exists $\left\{u_{n}\right\}_{1 \leq n<\infty}$ of real functions with $u_{n} \in L^{2}\left(X_{n}, m_{n}\right)$, which $L^{2}$ converges to $u$ and\[\mathcal{E}_{\infty}\left(u\right)=\lim _{n \rightarrow \infty} \mathcal{E}_{n}\left(u_{n}\right)\]
\end{enumerate}
\end{definition}

The following theorem of Mosco \cite[Corollary 2.6.1]{Mos94} and Kuwae--Shioya \cite[Theorem 2.4]{KS03} is used to characterize Mosco convergence in the convergence of semigroups. 
\begin{theorem}[Mosco, Kuwae--Shioya]\label{t.mks}
Let $\mathcal{E}_{n}$ be a quadratic form on $L^{2}(X_{n},m_{n})$ for each $1\leq n\leq \infty$. Let $\{P_{t}^{(n)}\}_{t>0}$, $1\leq n\leq \infty$ be their corresponding semigroups. Then $\mathcal{E}_{n}$ Mosco converges to $\mathcal{E}_{\infty}$ if and only if $P_{t}^{(n)}$ converges to $P_{t}^{(\infty)}$ in the strong operator topology, for each $t>0$.
\end{theorem}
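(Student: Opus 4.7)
The plan is to establish both implications by passing through the strong convergence of resolvents as an intermediate characterization. Writing $G^{(n)}_\alpha := (\alpha - A_n)^{-1}$ for $\alpha > 0$, where $A_n$ is the non-positive self-adjoint generator associated with the closed form $\mathcal{E}_n$, I would prove that the following three conditions are equivalent: (a) $\mathcal{E}_n$ Mosco converges to $\mathcal{E}_\infty$; (b) for some (equivalently, every) $\alpha > 0$, $G^{(n)}_\alpha \to G^{(\infty)}_\alpha$ in the strong operator topology; (c) $P^{(n)}_t \to P^{(\infty)}_t$ in the strong operator topology for every $t > 0$.

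For (a)$\Leftrightarrow$(b), the key input is the variational characterization of the resolvent: $u_n := G^{(n)}_\alpha f_n$ is the unique minimizer on $L^2(X_n, m_n)$ of the strictly convex coercive functional
\[
F^{(n)}_\alpha(u) := \mathcal{E}_n(u) + \alpha \|u\|_{L^2(X_n,m_n)}^2 - 2\langle f_n, u\rangle_{L^2(X_n,m_n)}.
\]
The lim inf condition (1) of Mosco, combined with the weak $L^2$-lower semicontinuity of $\|\cdot\|^2$ and continuity of the linear term under strong $L^2$-convergence of $f_n \to f$, yields lower semicontinuity of $F^{(n)}_\alpha$ along $L^2$-weakly convergent sequences; the recovery sequence condition (2) provides a comparison sequence whose functional values converge. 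Coercivity $F^{(n)}_\alpha(u) \geq \alpha \|u\|^2 - \alpha^{-1}\|f_n\|^2$ forces the minimizers to be uniformly bounded, and a Urysohn subsequence argument upgrades pointwise variational control to $L^2$-convergence $G^{(n)}_\alpha f_n \to G^{(\infty)}_\alpha f$. Conversely, assuming (b), one produces the required recovery sequence for $u$ in a dense subset $\mathrm{dom}(A_\infty)$ of $\mathcal{F}_\infty$ by setting $u_n := G^{(n)}_\alpha \upphi_n(\alpha u - A_\infty u)$; the lim inf inequality is then extracted from the identity $\mathcal{E}_n(G^{(n)}_\alpha g_n) = \alpha \langle G^{(n)}_\alpha g_n, g_n\rangle - \alpha^2 \|G^{(n)}_\alpha g_n\|^2$ applied to an arbitrary weakly convergent sequence.

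For (b)$\Leftrightarrow$(c), I would invoke a Trotter--Kato type argument adapted to the varying-space framework. The direction (c)$\Rightarrow$(b) is immediate from the Laplace representation $G^{(n)}_\alpha f_n = \int_0^\infty e^{-\alpha t} P^{(n)}_t f_n\, dt$, dominated convergence, and the uniform contractivity $\|P^{(n)}_t\|\leq 1$. For (b)$\Rightarrow$(c), the Yosida approximation $A^{(\lambda)}_n := \lambda(\lambda G^{(n)}_\lambda - I)$ gives bounded generators with $\|e^{t A^{(\lambda)}_n}\|\leq 1$ and the exponential formula $P^{(n)}_t u_n = \lim_{\lambda\to\infty} e^{t A^{(\lambda)}_n} u_n$, uniformly in $n$ by the contraction bound; strong resolvent convergence then allows one to commute limits in $n$ and $\lambda$.

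The principal obstacle is not any single step but the coherent handling of the varying-space setting: one must verify that the fundamental tools of functional analysis (weak $L^2$-compactness of norm-bounded sequences, lower semicontinuity of convex functionals under weak convergence, the Banach--Saks property, and the interplay between $L^2$-strong and $L^2$-weak convergence in Definition \ref{d.l2}) transfer to the inductive limit Hilbert space built from $\{L^2(X_n, m_n), \upphi_n, \mathcal{C}\}$ in Definition \ref{d.conhil}. This is the technical core of \cite[Sections 2.3--2.6]{KS03}, extending Mosco's fixed-space argument \cite{Mos94}; once those foundations are in place the scheme above goes through essentially verbatim from the classical case.
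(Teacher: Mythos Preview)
The paper does not supply its own proof of this theorem; it is stated as a known result and attributed to Mosco \cite[Corollary 2.6.1]{Mos94} in the fixed-space case and to Kuwae--Shioya \cite[Theorem 2.4]{KS03} in the varying-space case. Your outline is correct and matches the architecture of those references: the equivalence is routed through strong resolvent convergence, with (a)$\Leftrightarrow$(b) obtained from the variational characterization of $G^{(n)}_\alpha$ and (b)$\Leftrightarrow$(c) from a Trotter--Kato argument via the Laplace transform and Yosida approximation. You have also correctly identified the genuine technical content, namely the extension of weak compactness, lower semicontinuity, and related Hilbert-space tools to the asymptotic framework of Definition~\ref{d.conhil}, which is precisely what \cite[Sections 2.2--2.6]{KS03} develops. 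Since the paper invokes the theorem as a black box, there is no comparison to make beyond noting that your sketch faithfully reproduces the cited proofs.
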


\begin{proof}[Proof of Theorem \ref{t.main}-\ref{lb.mosco}]
Since we take only countably many subsequences on Section \ref{s.dflim} and in \eqref{e.defq}, we can find a common subsequence such that the estimates in Proposition \ref{p.hs-r} hold for all $R\in\mathbb{Q}_{+}\cup\{R_{j}\}_{j=1}^{\infty}$. We will show that along this subsequence, $P_{t}^{(n)} u_{n} \rightarrow Q_{t} u$ in $L^{2}$ for $u_{n} \rightarrow u$ in $L^{2}$ for every $t>0$, which gives Theorem \ref{t.main}-\ref{lb.mosco} by Theorem \ref{t.mks}. 

By {Definition \ref{d.l2} and Theorem \ref{t.conv-Hil}}, there exist some $v_{j} \in C_{c}\left(X_{\infty}, d_{\infty}\right)$ such that
\begin{equation}\label{e.mc3}
v_{j} \rightarrow u \text { in } L^{2}\left(X_{\infty}, m_{\infty}\right)
\end{equation}
and 
\begin{equation}\label{e.mc4}
\lim _{j \rightarrow \infty} \limsup _{n \rightarrow \infty}\left\|\upphi_{n} v_{j}-u_{n}\right\|_{L^{2}\left(X_{n}, m_{n}\right)}=0 .
\end{equation}
where $\upphi_{n}$ is defined in \eqref{e.upphi}. We may assume that the sequences in Proposition \ref{p.isom} satisfying $\{\epsilon_{n}\}\subset(0,1/8)$ and $\{R_{j}\}\subset[1,\infty)$. Moreover we may assume that $\supp\left(v_{j}\right) \subset B_{\infty}\left(p_{\infty}, R_{j}-1\right) \subset  B_{\infty}\left(p_{\infty}, R_{j}\right)$ so that $\supp(\upphi_{n}v_{j})\subset B_{n}(p_{n},R_{j})$. Let $\phi_{j} \in C_{c}\left(B_{\infty}\left(p_{\infty}, R_{j}\right)\right)$ be a cut-off function such that \[\text{$0 \leq \phi_{j} \leq 1$ and $\left.\phi_{j}\right|_{B_{\infty}\left(p_{\infty}, R_{j}-R_{j}^{-\delta}\right)}=1$,}\] where the constant $\delta>0$ will be chosen later. Define
\begin{equation}\label{e.mc5}
w_{j}(x):=\phi_{j}(x) \cdot Q_{t}^{\left(R_{j}\right)} v_{j}(x), \quad x \in X_{\infty} .
\end{equation}
Then $w_{j} \in C_{c}\left(X_{\infty}, d_{\infty}\right)$ {since $Q_{t}^{\left(R_{j}\right)} v_{j}$ is H\"{o}lder continuous on $B_{\infty}(p_{\infty}, R_{j})$ by Remark \ref{r.Hold-r}}. We claim that:
\begin{equation}\label{e.mc6}
w_{j} \rightarrow Q_{t} u \text { in } L^{2}\left(X_{\infty}, m_{\infty}\right)
\end{equation}
and
\begin{equation}\label{e.mc7}
\lim _{j \rightarrow \infty} \limsup _{n \rightarrow \infty}\left\|\upphi_{n} w_{j}-P_{t}^{(n)} u_{n}\right\|_{L^{2}\left(X_{n}, m_{n}\right)}=0.
\end{equation}
which will imply, by Definition \ref{d.l2}, that $P_{t}^{(n)} u_{n} \rightarrow Q_{t} u$ in $L^{2}$.
\begin{enumerate}
[label=\textup{({\roman*})}, labelindent=0pt]
\item \textit{Proof of  \eqref{e.mc6}.}

By triangle inequality,
\begin{equation}\label{e.mc6+}
\left\|w_{j}-Q_{t} u\right\|_{L^{2}\left(X_{\infty}, m_{\infty}\right)}= \left\|\phi_{j} \cdot Q_{t}^{\left(R_{j}\right)} v_{j}-Q_{t} u\right\|_{L^{2}\left(X_{\infty}, m_{\infty}\right)} \leq \sum_{k=1}^{4} I_{k}(j)
\end{equation}
where \begin{align}
I_{1}(j)&:= \left\|\phi_{j} \cdot Q_{t}^{\left(R_{j}\right)} v_{j}-Q_{t}^{\left(R_{j}\right)} v_{j}\right\|_{L^{2}\left(X_{\infty}, m_{\infty}\right)}\\
I_{2}(j)&:= \left\|Q_{t}^{\left(R_{j}\right)} v_{j}-Q_{t}^{\left(R_{j}\right)}\left(u {\one}_{B_{\infty}\left(p_{\infty}, R_{j}\right)}\right)\right\|_{L^{2}\left(X_{\infty}, m_{\infty}\right)}\\
I_{3}(j)&:=\left\|Q_{t}^{\left(R_{j}\right)}\left(u {\one}_{B_{\infty}\left(p_{\infty}, R_{j}\right)}\right)-Q_{t}\left(u {\one}_{B_{\infty}\left(p_{\infty}, R_{j}\right)}\right)\right\|_{L^{2}\left(X_{\infty}, m_{\infty}\right)} \\
I_{4}(j)&:=\left\|Q_{t}\left(u {\one}_{B_{\infty}\left(p_{\infty}, R_{j}\right)}\right)-Q_{t} u\right\|_{L^{2}\left(X_{\infty}, m_{\infty}\right)}.
\end{align}
Recall that we set $q^{(R)}(t, x, y)$ to be zero if either $x$ or $y$ is outside $B_{\infty}\left(p_{\infty}, R\right)$. 
Since $V_{u}$ is doubling, we can find a constant $\delta>0$ such that
\begin{equation}\label{e.volum0}
R^{-(\delta+1)\gamma}V_{u}(R)\to0\ \text{ as }R\to\infty,
\end{equation}
where $\gamma$ is the constant in Lemma \ref{l.annu}.
\begin{enumerate}
[align=right,leftmargin=*,topsep=5pt,parsep=0pt,itemsep=2pt]
\item By a direct estimate, 
\begin{align}
&\phantom{ \ \leq}I_{1}(j)\\
 & \leq \sup _{x \in B_{\infty}\left(p_{\infty}, R_{j}\right)}\left|Q_{t}^{\left(R_{j}\right)} v_{j}(x)\right| \cdot m_{\infty}\left(B_{\infty}\left(p_{\infty}, R_{j}\right) \setminus B_{\infty}\left(p_{\infty}, R_{j}-R_{j}^{-\delta}\right)\right)^{1/2} \\
& \lesssim R_{j}^{-(\delta+1)\gamma/2}V_{\infty}(R_{j})^{1/2} \\
&\qquad \times \sup _{x \in B_{\infty}\left(p_{\infty}, R_{j}\right)}\left|\int_{B_{\infty}\left(p_{\infty}, R_{j}\right)}q^{\left(R_{j}\right)}(t, x, \cdot)^{2} \dif  m_{\infty}\right|^{1 / 2}\left\|v_{j}\right\|_{L^{2}\left(B_{\infty}\left(p_{\infty}, R_{j}\right)\right)}\\
&\quad \text{(by Cauchy--Schwarz inequality and Proposition \ref{l.annu})}\\
& \lesssim R_{j}^{-(\delta+1)\gamma/2}V_{\infty}(R_{j})^{1/2}   \sup _{x \in B_{\infty}\left(p_{\infty}, R_{j}\right)} q^{\left(R_{j}\right)}(2 t, x, x)^{1/2}\lVert u\rVert_{L^{2}\left(X_{\infty}, m_{\infty}\right)}\\
&\quad \text{(by \eqref{e.mc3}, the symmetry and semigroup property in Proposition \ref{p.heat-r})}\\
&\lesssim \frac{1}{V_{\infty}(\Psi_{\infty}^{-1}(2t))^{1/2}}R_{j}^{-(\delta+1)\gamma/2}V_{u}(R_{j})^{1/2}\lVert u\rVert_{L^{2}\left(X_{\infty}, m_{\infty}\right)}\overset{\eqref{e.volum0}}{\to} 0 \quad(j\to\infty).\label{e.mc8}
\end{align}

\item For the second term, by the contraction property \eqref{e.qheat2} of $Q^{(R_{j})}$ and triangle inequality,
\begin{align}
I_{2}(j) & \leq\left\|v_{j}-u {\one}_{B_{\infty}\left(p_{\infty}, R_{j}\right)}\right\|_{L^{2}\left(X_{\infty}, m_{\infty}\right)} \\
& \leq\left\|v_{j}-u\right\|_{L^{2}\left(X_{\infty}, m_{\infty}\right)}+\left\|u-u {\one}_{B_{\infty}\left(p_{\infty}, R_{j}\right)}\right\|_{L^{2}\left(X_{\infty}, m_{\infty}\right)} \rightarrow 0\quad(j \rightarrow \infty) .\label{e.mc9}
\end{align}
\item By definition, for any $x\in X_{\infty}$, whenever $R_{j}\geq 4d_{\infty}(p_{\infty},x)+1$, we have {by the monotone convergence theorem that}
\begin{align}
&\phantom{\leq\ }\left|Q_{t}^{\left(R_{j}\right)}\left(u {\one}_{B_{\infty}\left(p_{\infty}, R_{j}\right)}\right)(x)-Q_{t}\left(u {\one}_{B_{\infty}\left(p_{\infty}, R_{j}\right)}\right)(x)\right|^{2} \\
&\leq \left(\int_{X_{\infty}}\left|q(t, x, y)-q^{\left(R_{j}\right)}(t, x, y) \right| \left|u(y)\right| \dif m_{\infty}(y)\right)^{2}\\
&\leq  \left(\int_{X_{\infty}}\left|q(t, x, y)-q^{\left(R_{j}\right)}(t, x, y) \right|^{2}  \dif m_{\infty}(y)\right)\lVert u\rVert_{L^{2}(X_{\infty},m_{\infty})}^{2}\\
&\rightarrow 0 \quad (j\to\infty)\label{e.mc10}.
\end{align}
Therefore the function $x\mapsto\left|Q_{t}^{\left(R_{j}\right)}\left(u {\one}_{B_{\infty}\left(p_{\infty}, R_{j}\right)}\right)(x)-Q_{t}\left(u {\one}_{B_{\infty}\left(p_{\infty}, R_{j}\right)}\right)(x)\right|$ goes to $0$ pointwisely. Meanwhile, it is dominated by $2 Q_{t}(|u|)$, which is in $L^{2}\left(X_{\infty}, m_{\infty}\right)$, so we may apply the dominated convergence theorem and conclude that $I_{3}(j) \rightarrow 0$ as $j \rightarrow \infty$. 
\item By the contractive property of $Q_{t}$ in Theorem \ref{t.proQ}, we have 
\begin{equation}\label{e.mc10+}
I_{4}(j) \leq\left\|u-u {\one}_{B_{\infty}\left(p_{\infty}, R_{j}\right)}\right\|_{L^{2}\left(X_{\infty}, m_{\infty}\right)} \rightarrow 0\quad (j \rightarrow \infty).
\end{equation} 
\end{enumerate}
Combining the estimates \eqref{e.mc8}, \eqref{e.mc9}, \eqref{e.mc10}, \eqref{e.mc10+} and \eqref{e.mc6+}, we complete the proof of \eqref{e.mc6}.
\item \textit{Proof of \eqref{e.mc7}.} 

By triangle inequality,
\begin{align}
&\phantom{\leq\ }\left\|\upphi_{n} w_{j}-P_{t}^{(n)} u_{n}\right\|_{L^{2}\left(X_{n}, m_{n}\right)}\\&\leq \left\|\upphi_{n} w_{j}-P_{t}^{(n)}\left(\upphi_{n} v_{j}\right)\right\|_{L^{2}\left(X_{n}, m_{n}\right)}+\left\|P_{t}^{(n)}\left(\upphi_{n} v_{j}\right)-P_{t}^{(n)} u_{n}\right\|_{L^{2}\left(X_{n}, m_{n}\right)} \\
&\leq  \sum_{k=1}^{4} I_{k}(n, j),\label{e.mc11}
\end{align}
where
\begin{align}
I_{1}(n,j)&:= \left\|\upphi_{n} w_{j}-\upphi_{n}\left(Q_{t}^{\left(R_{j}\right)} v_{j}\right)\right\|_{L^{2}\left(X_{n}, m_{n}\right)}\\
I_{2}(n,j)&:=\left\|\upphi_{n}\left(Q_{t}^{\left(R_{j}\right)} v_{j}\right)-P_{t}^{\left(n, R_{j}\right)}\left(\upphi_{n} v_{j}\right)\right\|_{L^{2}\left(X_{n}, m_{n}\right)} \\
I_{3}(n,j)&:=\left\|P_{t}^{\left(n, R_{j}\right)}\left(\upphi_{n} v_{j}\right)-P_{t}^{(n)}\left(\upphi_{n} v_{j}\right)\right\|_{L^{2}\left(X_{n}, m_{n}\right)} \\
I_{4}(n,j)&:= \left\|P_{t}^{(n)}\left(\upphi_{n} v_{j}\right)-P_{t}^{(n)} u_{n}\right\|_{L^{2}\left(X_{n}, m_{n}\right)}
\end{align}
We estimate $I_{k}(n,j)$, $1\leq k\leq 4$ term by term as follows. 
\begin{enumerate}
[wide, labelindent=0pt]
\item \textit{Estimate $I_{1}(n, j)$.} 

By definition of $\upphi_{n}$ and $w_{j}$, if $x \in \overline{B_{n}\left(p_{n}, R_{n}\right)}$,
\begin{equation}\label{e.mc12}
\left|\upphi_{n} w_{j}(x)-\upphi_{n}\left(Q_{t}^{\left(R_{j}\right)} v_{j}\right)(x)\right|=\left(1-\phi_{j}\left(f_{n}(x)\right)\right) \cdot\left(Q_{t}^{\left(R_{j}\right)} v_{j}\right)\left(f_{n}(x)\right)
\end{equation}
Since $Q_{t}^{\left(R_{j}\right)} v_{j}$ vanishes outside ${B_{\infty}\left(p_{\infty}, R_{j}\right)}$ and is {bounded and }H\"older continuous on ${B_{\infty}\left(p_{\infty}, R_{j}\right)}$, we conclude that, as $n \rightarrow \infty$,
\begin{align}
I_{1}(n,j)^{2}=&\int_{X_{n}}\left|\upphi_{n} w_{j}(x)-\upphi_{n}\left(Q_{t}^{\left(R_{j}\right)} v_{j}\right)(x)\right|^{2} \dif  m_{n}(x)\\ 
=&\ \int_{\overline{B_{n}\left(p_{n}, R_{n}\right)}}\left(1-\phi_{j}\left(f_{n}(x)\right)\right)^{2}\left(Q_{t}^{\left(R_{j}\right)} v_{j}\right)^{2}\left(f_{n}(x)\right) \mathrm{d} m_{n}(x) \\
=&\ \int_{B_{\infty}\left(p_{\infty}, R_{j}\right)}\left(1-\phi_{j}(y)\right)^{2}\left(Q_{t}^{\left(R_{j}\right)} v_{j}\right)^{2}(y) \dif\left(f_{n}\right)_{\#} \left(m_{n}\vert_{\overline{B_{n}\left(p_{n}, R_{n}\right)}}\right)(y) \\
 \overset{\eqref{e.vague}}{\rightarrow}&\ \int_{B_{\infty}\left(p_{\infty}, R_{j}\right)}\left(1-\phi_{j}(y)\right)^{2}\left(Q_{t}^{\left(R_{j}\right)} v_{j}\right)^{2}(y) \dif m_{\infty}(y)=I_{1}(j)^{2},\label{e.mc13}
\end{align}
which means
\begin{equation}
\limsup _{n \rightarrow \infty} I_{1}(n, j)= I_{1}(j).
\end{equation}
Therefore 
\begin{equation}\label{e.mc14}
\lim_{j\to\infty}\limsup _{n \rightarrow \infty} I_{1}(n, j)=\lim_{j\to\infty} I_{1}(j)\overset{\eqref{e.mc8}}{=}0.
\end{equation}
\item \textit{Estimate $I_{2}(n, j)$.}  

For $n$ sufficiently large, since the mass between $B_{n}\left(p_{n}, R_{j}+2\epsilon_{n} \right)\setminus B_{n}\left(p_{n}, R_{j}\right)$ is small, we have by triangle inequality that
\begin{align*}
I_{2}(n, j)^{2}\lesssim & \int_{B_{n}\left(p_{n}, R_{j}\right)}\left(Q_{t}^{\left(R_{j}\right)}{v_{j}}\left(f_{n}(x)\right)-P_{t}^{\left(n, R_{j}\right)}\left(\upphi_{n} v_{j}\right)(x)\right)^{2} \dif  m_{n}(x) \\
\lesssim &  \int_{B_{n}\left(p_{n}, R_{j}\right)}\left(Q_{t}^{\left(R_{j}\right)}{v_{j}}\left(f_{n}(x)\right)-P_{t}^{\left(n, R_{j}\right)}\left(\upphi_{n} v_{j}\right)\left(g_{n}\left(f_{n}(x)\right)\right)\right)^{2} \dif  m_{n}(x) \\
& + \int_{B_{n}\left(p_{n}, R_{j}\right)}\left(P_{t}^{\left(n, R_{j}\right)}\left(\upphi_{n} v_{j}\right)\left(g_{n}\left(f_{n}(x)\right)\right)-P_{t}^{\left(n, R_{j}\right)}\left(\upphi_{n} v_{j}\right)(x)\right)^{2} \dif  m_{n}(x) \\
= &  \int_{f_{n}\left(B_{n}\left(p_{n}, R_{j}\right)\right)}\left(Q_{t}^{\left(R_{j}\right)}{v_{j}}(y)-P_{t}^{\left(n, R_{j}\right)}\left(\upphi_{n} v_{j}\right)\left(g_{n}(y)\right)\right)^{2} \dif \left(f_{n}\right)_{\#} m_{n}(y) \\
& + \int_{B_{n}\left(p_{n}, R_{j}\right)}\left(P_{t}^{\left(n, R_{j}\right)}\left(\upphi_{n} v_{j}\right)\left(g_{n}\left(f_{n}(x)\right)\right)-P_{t}^{\left(n, R_{j}\right)}\left(\upphi_{n} v_{j}\right)(x)\right)^{2} \dif  m_{n}(x) \\
:= & I_{2,1}(n, j)+I_{2,2}(n, j)
\end{align*}
To estimates $I_{2,1}(n, j)$, 
{\small
\begin{align}
&\phantom{\ \leq} \left|Q_{t}^{\left(R_{j}\right)}{v_{j}}(y)-P_{t}^{\left(n, R_{j}\right)}\left(\upphi_{n} v_{j}\right)\left(g_{n}(y)\right)\right| \\
&=  \left|\int_{B_{\infty}\left(p_{\infty}, R_{j}\right)} q^{\left(R_{j}\right)}(t, y, z) v_{j}(z) \dif m_{\infty}(z)-\int_{B_{n}\left(p_{n}, R_{j}\right)} p^{\left(n, R_{j}\right)}\left(t, g_{n}(y), z\right) v_{j}\left(f_{n}(z)\right) \dif m_{n}(z)\right| \\
&\leq  \left|\int_{B_{\infty}\left(p_{\infty}, R_{j}\right)} q^{\left(R_{j}\right)}(t, y, z) v_{j}(z) \dif m_{\infty}(z)-\int_{B_{n}\left(p_{n}, R_{j}\right)} q^{\left(n, R_{j}\right)}\left(t, y, f_{n}(z)\right) v_{j}\left(f_{n}(z)\right) \dif m_{n}(z)\right| \\
 &+\left|\int_{B_{n}\left(p_{n}, R_{j}\right)}\left(p^{\left(n, R_{j}\right)}\left(t, g_{n}(y), g_{n}\left(f_{n}(z)\right)\right)-p^{\left(n, R_{j}\right)}\left(t, g_{n}(y), z\right)\right) v_{j}\left(f_{n}(z)\right) \dif m_{n}(z)\right| \\
&\leq  \left|\int_{B_{\infty}\left(p_{\infty}, R_{j}\right)} q^{\left(R_{j}\right)}(t, y, z) v_{j}(z) \dif m_{\infty}(z)-\int_{B_{n}\left(p_{n}, R_{j}\right)} q^{\left(R_{j}\right)}\left(t, y, f_{n}(z)\right) v_{j}\left(f_{n}(z)\right) \dif m_{n}(z)\right|\\
	&\quad +\left|\int_{B_{\infty}\left(p_{\infty}, R_{j}\right)} \left(q^{\left(R_{j}\right)}(t, y, z) -q^{\left(n, R_{j}\right)}\left(t, y, z\right)\right) v_{j}\left(z\right) \dif\left(f_{n}\right)_{\#}\left(m_{n}|_{\overline{B_{n}(p_{n},R_{n})}}\right)(z)\right|\\
	&\quad + \left|\int_{B_{n}\left(p_{n}, R_{j}\right)}\left(p^{\left(n, R_{j}\right)}\left(t, g_{n}(y), g_{n}\left(f_{n}(z)\right)\right)-p^{\left(n, R_{j}\right)}\left(t, g_{n}(y), z\right)\right) v_{j}\left(f_{n}(z)\right) \dif m_{n}(z)\right|.
\label{e.mc21}
\end{align}
}
The second and third term in \eqref{e.mc21} converge to $0$ uniformly on $y$, as $n \rightarrow \infty$, by the uniform convergence in \eqref{e.real} and the equi-continuity of $p^{\left(n, R_{j}\right)}(t, \cdot, \cdot)$, respectively. For the first term, we use the expression \eqref{e.hs-r} and see that for each $y$, the first term is less or equal to 
\begin{equation}
	\sum_{k=1}^{\infty}e^{-\lambda_{k}^{(R)}t}\abs{\varphi_{k}^{(R)}(y)}D_{k,j,n}\overset{\eqref{e.ration3}}{\lesssim }\frac{\sum_{k=1}^{\infty}e^{-\lambda_{k}^{(R)}t/2}D_{k,j,n}}{V_{\infty}(\Psi_{\infty}^{-1}(t/2){\wedge\diam(X_{\infty},d_{\infty}}))^{1/2}}.
\end{equation}
where \[D_{k,j,n}:=\left|\int_{B_{\infty}\left(p_{\infty}, R_{j}\right)}\varphi_{k}^{(R)}\cdot v_{j}\dif m_{\infty}- \int_{B_{\infty}\left(p_{\infty}, R_{j}\right)}\varphi_{k}^{(R)}\cdot v_{j}\dif\left(f_{n}\right)_{\#}\left(m_{n}|_{\overline{B_{n}(p_{n},R_{n})}}\right)\right|.\]
Since for each $k$ and $j$, $\lim_{n\to\infty}D_{k,j,n}=0$ by the convergence of measures in Proposition \ref{p.mea.tan}. By \eqref{e.ration3}, $\exp(-\lambda_{k}^{(R)}t/2)D_{k,j,n}$ is dominated by \[C\norm{v_{j}}_{L^{\infty}}V_{l}(R_j){V_{\infty}(\Psi_{\infty}^{-1}(t/4)\wedge\diam(X_{\infty},d_{\infty}))^{-1/2}}e^{-\lambda_{k}^{(R)}t/4}\]which is summable with respective to $k$ by Mercer's theorem on trace \cite[Proposition 5.6.9]{Dav07} (similar to \eqref{e.Mercer}). By the dominated convergence theorem, \[\lim_{n\to\infty}\sum_{k=1}^{\infty}e^{-\lambda_{k}^{(R)}t/2}D_{k,j,n}=0.\] Therefore the first term also converges to $0$ uniformly on $y$, as $n \rightarrow \infty$. Consequently, we conclude that $\lim \sup _{n \rightarrow \infty} I_{2,1}(n, j)=0$. It is easy to see that, by the equi-continuity of $p^{\left(n, R_{j}\right)}(t, \cdot, \cdot)$ and dominated convergence theorem, $\lim \sup _{n \rightarrow \infty} I_{2,2}(n, j)= 0$. Thus
\begin{equation}\label{e.mc22}
\limsup _{n \rightarrow \infty} I_{2}(n, j)=0 .
\end{equation}
\item \textit{Estimate $I_{3}(n, j)$.}

We decompose the integration in $I_{3}(n,j)$ into two parts: \begin{align}
I_{3,1}(n,j)&:=\left\|P_{t}^{\left(n, R_{j}\right)}\left(\upphi_{n} v_{j}\right)-P_{t}^{(n)}\left(\upphi_{n} v_{j}\right)\right\|_{L^{2}\left(B_{n}\left(p_{n}, R_{j}\right), m_{n}\right)}^{2}\\
\text{and }I_{3,2}(n,j)&:=\left\|P_{t}^{\left(n, R_{j}\right)}\left(\upphi_{n} v_{j}\right)-P_{t}^{(n)}\left(\upphi_{n} v_{j}\right)\right\|_{L^{2}\left(B_{n}\left(p_{n}, R_{j}\right)^{c}, m_{n}\right)}^{2},
\end{align}
so that $I_{3}(n,j)^{2}= I_{3,1}(n,j)+I_{3,2}(n,j)$. Given any $\epsilon>0$, we fix a $j_{\epsilon} \in \mathbb{N}$ such that $\left\|v_{j}-v_{j_{\epsilon}}\right\|_{L^{2}\left(X_{\infty}, m_{\infty}\right)}<\epsilon$ for all $j \geq j_{\epsilon}$, as $\left\{v_{j}\right\}$ is a Cauchy sequence in $L^{2}\left(X_{\infty}, m_{\infty}\right)$. 

Since $\upphi_{n} v_{j}$ is supported in $B_{n}\left(p_{n}, R_{j}\right)$ when $n$ is large enough, we have by triangle inequality that
{\footnotesize \begin{align}
& \phantom{\leq\ }I_{3,1}(n,j)=\left\|P_{t}^{\left(n, R_{j}\right)}\left(\upphi_{n} v_{j}\right)-P_{t}^{(n)}\left(\upphi_{n} v_{j}\right)\right\|_{L^{2}\left(B_{n}\left(p_{n}, R_{j}\right), m_{n}\right)}^{2} \\
&=  \int_{B_{n}\left(p_{n}, R_{j}\right)}\left(\int_{B_{n}\left(p_{n}, R_{j}\right)}\left(p^{(n)}(t, x, y)-p^{(n,R_{j})}(t, x, y)\right) \upphi_{n} v_{j}(y) \dif m_{n}(y)\right)^{2} \dif  m_{n}(x) \\
&\lesssim   \int_{B_{n}\left(p_{n}, R_{j}\right)}\left(\int_{B_{n}\left(p_{n}, R_{j}\right)}\left(p^{(n)}(t, x, y)-p^{(n,R_{j})}(t, x, y)\right) \abs{\upphi_{n} v_{j}(y)-\upphi_{n} v_{j_{\epsilon}}(y)} \dif m_{n}(y)\right)^{2} \dif  m_{n}(x)\\
& \quad+ \int_{B_{n}\left(p_{n}, R_{j}\right)}\left(\int_{B_{n}\left(p_{n}, R_{j}\right)}\left(p^{(n)}(t, x, y)-p^{(n,R_{j})}(t, x, y)\right)\abs{ \upphi_{n} v_{j_{\epsilon}}(y)} \dif m_{n}(y)\right)^{2} \dif  m_{n}(x) \label{e.mc23-}
\end{align}
\normalsize}

The first term in \eqref{e.mc23-} is less than or equal to \begin{equation}
\left\lVert P^{(n)}_{t} \abs{\upphi_{n} v_{j}-\upphi_{n} v_{j_{\epsilon}}}\right\rVert_{L^{2}\left(X_{n}, m_{n}\right)}^{2}\leq \lVert\upphi_{n} v_{j}-\upphi_{n} v_{j_{\epsilon}}\rVert_{L^{2}\left(X_{n}, m_{n}\right)}^{2}
\end{equation} 
which tends to $\lVert v_{j}-v_{j_{\epsilon}}\rVert_{L^{2}(X_{\infty},m_{\infty})}^{2}\leq\epsilon^{2}$ as $n\to\infty$ by {Theorem \ref{t.conv-Hil}.} If $R_{j}<2\diam(X_{n},d_{n})$, then by the upper bound of $p^{(n)}$, we have for $y\in B_{n}\left(p_{n}, R_{j_{\epsilon}}\right)$,\begin{align}
	&0\leq \int_{B_{n}\left(p_{n}, R_{j}\right)}\left(p^{(n)}(t, x, y)-p^{(n,R_{j})}(t, x, y)\right)^{2} \dif m_{n}(x)\label{e.mc23-+-}\\
	&\leq \int_{B_{n}\left(p_{n}, R_{j}\right)}p^{(n)}(t, x, y)\cdot \left(p^{(n)}(t, x, y)-p^{(n,R_{j})}(t, x, y)\right) \dif m_{n}(x) \\
	&\qquad \text{(monotonicity of heat kernel)}\\
	&\lesssim \frac{\int_{B_{n}\left(p_{n}, R_{j}\right)}\left(p^{(n)}(t, x, y)-p^{(n,R_{j})}(t, x, y)\right) \dif m_{n}(x)}{V_{n}(\Psi_{n}^{-1}(t){\wedge\diam(X_{n},d_{n})})} \ \text{(on-diagonal upper bound)}\\
	&\leq \frac{1}{V_{n}(\Psi_{n}^{-1}(t){\wedge\diam(X_{n},d_{n})})}\left(1- \int_{B_{n}\left(p_{n}, R_{j}\right)}p^{(n,R_{j})}(t, x, y)\dif m_{n}(x) \right)\\
	&=\frac{\left(1- \mathbb{P}^{y}(t<\tau_{B_{n}\left(p_{n}, R_{j}\right)}) \right)}{V_{n}(\Psi_{n}^{-1}(t){\wedge\diam(X_{n},d_{n})})}=\frac{\mathbb{P}^{y}(\tau_{B_{n}\left(p_{n}, R_{j}\right)}\leq t)}{V_{n}(\Psi_{n}^{-1}(t){\wedge\diam(X_{n},d_{n})})} \\
	&\leq \frac{\mathbb{P}^{y}(\tau_{B_{n}\left(y, R_{j}-R_{j_{\epsilon}}\right)}\leq t)}{V_{n}(\Psi_{n}^{-1}(t){\wedge\diam(X_{n},d_{n})})}\ \\
	&\qquad {\text{(as $B_{n}\left(y, R_{j}-R_{j_{\epsilon}}\right)\subset B_{n}\left(p_{n}, R_{j}\right)$ so $\tau_{B_{n}\left(y, R_{j}-R_{j_{\epsilon}}\right)}\leq \tau_{B_{n}\left(p_{n}, R_{j}\right)}$)}}\\
	&\leq \frac{1}{V_{n}(\Psi_{n}^{-1}(t){\wedge\diam(X_{n},d_{n})})}\exp\left(-{\gamma}\left({\frac{\Psi_{n}(R_{j}-R_{j_{\epsilon}})}{t}}\right)^{1/(\beta^{\prime}-1)}\right)\label{e.mc23-+}
\end{align}where in the last inequality we use \cite[Corollary 3.20]{GT12} (or we can apply \cite[Theorem 7.2 (1)'$\Rightarrow$(7)]{GK17} and use the calculation in \cite[(5.52) in p.~549]{GH14} to obtain \cite[Theorem 7.2 (1)']{GK17}). If $\diam(X_{n},d_{n})<\infty$ and $R_{j}>\diam(X_{n},d_{n})$, then $B_{n}(p_{n},R_{j})=X_{n}$ so the integrand in \eqref{e.mc23-+-} vanishes and the upper bound \eqref{e.mc23-+} holds automatically. Consequently, the estimate \eqref{e.mc23-+} holds for all $j\in\mathbb{N}$. Since $\upphi_{n} v_{j_{\epsilon}}$ is supported in $B_{n}\left(p_{n}, R_{j_{\epsilon}}\right)$, we have by Minkowski inequality that, whenever $R_{j}\geq 8R_{j_{\epsilon}}$, the second term in \eqref{e.mc23-} is less than or equal to
{\small \begin{align}
&\phantom{\leq\ }\left(\int_{B_{n}\left(p_{n}, R_{j_{\epsilon}}\right)}\left(\int_{B_{n}\left(p_{n}, R_{j}\right)}\left(p^{(n)}(t, x, y)-p^{(n,R_{j})}(t, x, y)\right)^{2} \dif m_{n}(x)\right)^{\frac{1}{2}} \left|\upphi_{n}v_{j_{\epsilon}}(y)\right|\dif  m_{n}(y)\right)^{2}\\
&\overset{\eqref{e.mc23-+}}{\lesssim} {\frac{1}{V_{n}(\Psi_{n}^{-1}(t){\wedge\diam(X_{n},d_{n})})}\left(\int_{B_{n}\left(p_{n}, R_{j_{\epsilon}}\right)}\exp\left(-\frac{\gamma}{2}{\frac{\Psi_{n}(R_{j}-R_{j_{\epsilon}})^{\frac{1}{\beta^{\prime}-1}}}{t^{\frac{1}{\beta^{\prime}-1}}}}\right) \left|\upphi_{n}v_{j_{\epsilon}}\right|\dif  m_{n}\right)^{2}}\\
&\lesssim \frac{V_{n}(R_{j_{\epsilon}}{\wedge\diam(X_{n},d_{n})})}{V_{n}(\Psi_{n}^{-1}(t){\wedge\diam(X_{n},d_{n})})}\exp\left(-{\gamma}\left({\frac{\Psi_{n}(R_{j}-R_{j_{\epsilon}})}{t}}\right)^{1/(\beta^{\prime}-1)}\right)\lVert \upphi_{n}v_{j_{\epsilon}}\rVert_{L^{2}(X_{n},m_{n})}^{2}\\
&\to  \frac{V_{\infty}(R_{j_{\epsilon}}{\wedge\diam(X_{\infty},d_{\infty})})}{V_{\infty}(\Psi_{\infty}^{-1}(t){\wedge\diam(X_{\infty},d_{\infty})})}\exp\left(-{\gamma}\left({\frac{\Psi_{\infty}(R_{j}-R_{j_{\epsilon}})}{t}}\right)^{1/(\beta^{\prime}-1)}\right)\lVert v_{j_{\epsilon}}\rVert_{L^{2}(X_{\infty},m_{\infty})}^{2} \\
&\quad\text{as $n\to\infty$},
\end{align}} 
where we use the Cauchy--Schwarz inequality in the second inequality, and the uniform continuity of $v_{j_{\epsilon}}$ in the third line. Letting $j\to\infty$, we see that the second term in \eqref{e.mc23-} vanishes and therefore
\begin{equation}\label{e.mc23}
\limsup_{j\to\infty}\limsup_{n\to\infty}I_{3,1}(n,j)\lesssim\epsilon^{2}.
\end{equation}
On the other hand, since $P_{t}^{\left(n, R_{j}\right)}\left(\upphi_{n} v_{j}\right)=0$ on $B_{n}\left(p_{n}, R_{j}\right)^{c}$, by triangle inequality and Minkowski inequality, whenever $R_{j}\geq 8R_{j_{\epsilon}}$,
{\small
\begin{align}
&\phantom{\leq\ } \limsup _{n \rightarrow \infty}I_{3,2}(n,j)=\limsup _{n \rightarrow \infty}\left\|P_{t}^{\left(n, R_{j}\right)}\left(\upphi_{n} v_{j}\right)-P_{t}^{(n)}\left(\upphi_{n} v_{j}\right)\right\|_{L^{2}\left(B_{n}\left(p_{n}, R_{j}\right)^{c}, m_{n}\right)}^{2} \\
&\lesssim \limsup _{n \rightarrow \infty} \left\|P_{t}^{(n)}\left(\upphi_{n} v_{j}-\upphi_{n} v_{j_{\epsilon}}\right)\right\|_{L^{2}\left(X_{n}, m_{n}\right)}^{2}+\limsup _{n \rightarrow \infty} \left\|P_{t}^{(n)}\left(\upphi_{n} v_{j_{\epsilon}}\right)\right\|_{L^{2}\left(B_{n}\left(p_{n}, R_{j}\right)^{c}, m_{n}\right)}^{2} \\
&\lesssim  \epsilon^{2}+\limsup _{n \rightarrow \infty}\int_{B_{n}\left(p_{n}, R_{j}\right)^{c}}\left(\int_{B_{n}\left(p_{n}, R_{j_{\epsilon}}\right)} p^{(n)}(t, x, y) v_{j_{\epsilon}}\left(f_{n}(y)\right) \dif m_{n}(y)\right)^{2} \dif  m_{n}(x) \\
&\lesssim   \epsilon^{2}+\limsup _{n \rightarrow \infty}\left(\int_{B_{n}\left(p_{n}, R_{j_{\epsilon}}\right)}\left\|p^{(n)}(t, \cdot, y)\right\|_{L^{2}\left(B_{n}\left(y, R_{j}-R_{j_{\epsilon}}\right)^{c}, m_{n}\right)}\left|v_{j_{\epsilon}}\left(f_{n}(y)\right)\right| \dif m_{n}(y)\right)^{2} \\
&\lesssim   \epsilon^{2}+\frac{V_{\infty}(R_{j_{\epsilon}}{\wedge\diam(X_{\infty},d_{\infty})})\lVert v_{j_{\epsilon}}\rVert_{L^{2}(X_{\infty},m_{\infty})}^{2}}{V_{\infty}(\Psi_{\infty}^{-1}(t){\wedge\diam(X_{\infty},d_{\infty})})} \exp\left(-c\left({\frac{\Psi_{\infty}(R_{j}-R_{j_{\epsilon}})}{t}}\right)^{\frac{1}{\beta^{\prime}-1}}\right). \label{e.mc24}
\end{align}
}
Combining \eqref{e.mc23} and \eqref{e.mc24}, we conclude that \[\limsup_{j \rightarrow \infty} \limsup _{n \rightarrow \infty} I_{3}(n, j) \lesssim \epsilon^{2}.\] Since $\epsilon$ is arbitrary, we have
\begin{equation}\label{e.mc25}
\lim _{j \rightarrow \infty} \limsup _{n \rightarrow \infty} I_{3}(n, j)=0 .
\end{equation}
\item \textit{Estimate $I_{4}(n,j)$.}

Since $P^{(n)}_{t}$ is contractive, we have $I_{4}(n, j) \leq\left\|\upphi_{n} v_{j}-u_{n}\right\|_{L^{2}\left(X_{n}, m_{n}\right)}$. Using \eqref{e.mc4}, we have
\begin{equation}\label{e.mc26}
\lim _{j \rightarrow \infty} \limsup _{n \rightarrow \infty} I_{4}(n, j)=0
\end{equation}
\end{enumerate}
Combining \eqref{e.mc14}, \eqref{e.mc22}, \eqref{e.mc25}, \eqref{e.mc26} and \eqref{e.mc11}, we finally have \eqref{e.mc7}.
\end{enumerate}
\end{proof}

\section{Some examples}\label{s.example}
In this section, we will reconstruct conservative, regular, and strongly local Dirichlet forms on the unbounded Sierpi\'{n}ski carpet and the unbounded Sierpi\'{n}ski gasket in Examples \ref{ex.sc} and \ref{ex.SG}, respectively, as applications of Theorem \ref{t.main}. The primary components of this reconstruction are the reflected Dirichlet forms on pre-carpets and the Brownian motion on Sierpi\'{n}ski gasket cable systems. In Example \ref{ex.inverse}, we present an alternative proof of the existence of diffusions with prescribed heat kernel estimates featuring power scaling functions, which constitutes a special case of a recent result by Murugan \cite{Mur25}. {It is worth noting that the limit measure constructed in Examples \ref{ex.sc}, \ref{ex.SG}, and \ref{ex.inverse} is equivalent to the Hausdorff measure on the limit space, owing to the Ahlfors regularity \cite[Exercise 8.11]{Hei01}.}
\begin{example}\label{ex.sc}
Let $F_{0}=[0,1]^{2}\subset \mathbb{R}^{2}$ be the unit square, $l=3$, $N=8$ and $S=\{1,\ldots, N\}$. Define $q_{k}$, $k\in S$ by 
\begin{alignat*}{4}
q_{1}&:=(0,0),\ &&q_{2}:=(1/3,0),\ &&q_{3}:=(2/3,0),\ &&q_{4}:=(2/3,1/3),\\
q_{5}&:=(2/3,2/3),\ &&q_{6}:=(1/3,2/3),\ &&q_{7}:=(0,2/3),\ &&q_{8}:=(0,1/3).
\end{alignat*} 
Let $h_{k}:\mathbb{R}^{2}\to \mathbb{R}^{2}$, $k\in S$ denote the similitude $h_{k}(x)=l^{-1}x+q_{k}$. We define a sequence of decreasing compact set $\{F_{n}\}$ inductively by
\begin{equation}
F_{n+1}:=\bigcup_{k\in S}h_{k}(F_{n}),\ n\geq0.
\end{equation}
Let $F_{\infty}:=\bigcap_{n=0}^{\infty}F_{n}$ be the standard planar Sierpi\'{n}ski carpet and let \begin{equation}
X_{0}:=\bigcup_{n\geq0}l^{n}F_{n}\text{ and }X_{\infty}:=\bigcup_{n\geq0}l^{n}F_{\infty}
\end{equation}
We call $X_{0}$ the \emph{pre-carpet} and $X_{\infty}$ the unbounded Sierpi\'{n}ski carpet. Obviously, $X_{\infty}\subsetneq  X_{0}$.
\begin{enumerate}[label=\textup{(\arabic*)},align=right,leftmargin=*,topsep=5pt,parsep=0pt,itemsep=2pt]
\item\label{lb.sc1} We denote $d_{\mathbb{R}^{2}}$ be a metric on $\mathbb{R}^{2}$ (and on its subsets) defined by $d_{\mathbb{R}^{2}}((x_{1},y_{1}),(x_{2},y_{2}))=|x_{1}-x_{2}|+|y_{1}-y_{2}|\asymp \max(|x_{1}-x_{2}|,|y_{1}-y_{2}|)$. Let $d_{0}$ (resp. $d_{\infty}$) be the intrinsic distance of $X_{0}$ (resp. $X_{\infty}$) with respect to $d_{\mathbb{R}^{2}}$, i.e. $d_{0}(x,y)$ (resp. $d_{\infty}(x,y)$) is the length of the shortest path in $X_{0}$ (resp. $X_{\infty}$) connecting $x$ and $y$. It is known that $(X_{0},d_{0})$ and $(X_{\infty},d_{\infty})$ are complete separable geodesic metric spaces. Moreover, $d_{0}$ and ${d_{\mathbb{R}^{2}}}|_{X_{0}}$, $d_{\infty}$ and ${d_{\mathbb{R}^{2}}}|_{X_{\infty}}$ are bi-Lipschitz equivalent \cite{Bar13}. 

\item\label{lb.sc2} Let \[\alpha_{{\mathrm{SC}}}:=\frac{\log8}{\log 3}\text{ and }\beta_{\mathrm{SC}}:=\frac{\log (8\rho)}{\log3}\]
where $\rho$ is the `resistance scaling factor' of standard Sierpi\'{n}ski carpet so that $\beta_{\mathrm{SC}}$ is its \emph{walk dimension} \cite{BB92}. 

\item\label{lb.sc3} Let $m_{0}$ be the Lebesgue measure on $\mathbb{R}^{2}$. We define $V_{0}({r})=r^{2}\wedge r^{\alpha_{{\mathrm{SC}}}}$, $r>0$. By \cite[(2.2)]{BB00}, \begin{equation}\label{e.m0}
m_{0}(B_{0}(x,r))\asymp V_{0}(r),\ x\in X_{0},\ r>0.
\end{equation}
where $B_{0}(x,r)=\{y\in X_{0}:d_{0}(x,y)<r\}$.

\item\label{lb.sc4} It is known from \cite[Theorem 5.3]{BB00} that the heat kernel of the reflected Dirichlet form $(\mathcal{E}_{0},\mathcal{F}_{0})$ on $L^{2}(X_{0},m_{0})$ defined by
\[\mathcal{E}_{0}(u,v):=\int_{X_{0}}\nabla u\cdot\nabla v\dif m_{0},\ u,v\in\mathcal{F}_{0}:=W^{1,2}(X_{0})\]
satisfies $\hyperlink{HKE}{\mathrm{HKE}(\Psi_{0})}$, where $\Psi_{0}(r)=r^{2}\vee r^{\beta_{\mathrm{SC}}}$, $r>0$.
\item\label{lb.sc5} We define a sequence of metric measure spaces $(X_{n},d_{n},m_{n})$, $n\geq1$, by 
\begin{equation}
X_{n}:=X_{0},\ d_{n}:=l^{-n}d_{0}\ \text{and}\ m_{n}:=l^{-\alpha_{\mathrm{SC}}n}m_{0}.
\end{equation}
By \eqref{e.m0},  \begin{equation}\label{e.mn}
m_{n}(B_{n}(x,r))\asymp V_{n}(r)\ \text{for all $x\in X_{n}=X_{0}$, all $r>0$ and all $n\geq0$}.
\end{equation}
where $B_{n}(x,r):=\{y\in X_{n}:d_{n}(x,y)<r\}$ and \begin{equation}\label{e.vn}
V_{n}(r):=l^{-\alpha_{\mathrm{SC}}n}V_{0}(l^{n}r).
\end{equation}
We define a sequence of Dirichlet forms $(\mathcal{E}_{n},\mathcal{F}_{n})$ on $L^{2}(X_{n},m_{n})$ by
\begin{equation}
\mathcal{E}_{n}(u,v):=l^{(\beta_{\mathrm{SC}}-\alpha_{\mathrm{SC}})n}\mathcal{E}_{0}(u,v),\ u,v\in \mathcal{F}_{n}:=\mathcal{F}_{0}.
\end{equation}
It is easy to verify that the heat kernel $p^{(n)}(t,x,y)$ of $(\mathcal{E}_{n},\mathcal{F}_{n})$ with respect to $m_{n}$ is related to the heat kernel $p(t,x,y)$ of $(\mathcal{E}_{0},\mathcal{F}_{0})$ with respect to $m_{0}$ in the following way:
\begin{equation}\label{e.ptn}
p^{(n)}(t,x,y)=l^{\alpha_{\mathrm{SC}}n}p(l^{\beta_{\mathrm{SC}}n}t,x,y),\ x,y\in X_{n}=X_{0},\ t>0\ \text{and}\ n\geq0.
\end{equation}
Let \begin{align}\label{e.psin}
\Psi_{n}(r):=l^{-\beta_{\mathrm{SC}}n}\Psi_{0}(l^{n}r),\ r\geq0,\ n\geq1.
\end{align}
so we have 
\begin{align}
\Psi_{n}^{-1}(t)&=l^{-n}\Psi_{0}^{-1}(l^{\beta_{\mathrm{SC}}n}t)\label{e.psi-1n},\\
\Phi_{n}(s)=\Phi_{\Psi_{n}}(s)&=l^{\beta_{\mathrm{SC}}n}\Phi_{0}(l^{(1-\beta_{\mathrm{SC}})n}s).\label{e.phin}
\end{align}
Combining \eqref{e.mn}, \eqref{e.vn}, \eqref{e.ptn}, \eqref{e.psin}, \eqref{e.psi-1n} and \eqref{e.phin} we see that the metric measure Dirichlet space $(X_{n},d_{n},m_{n},\mathcal{E}_{n},\mathcal{F}_{n})$ satisfies $\hyperlink{HKE}{\mathrm{HKE}(\Psi_{n})}$ with constants independent of $n$. It is easy to verify that conditions \ref{lb.as1}-\ref{lb.as5} hold with $\beta=2$, $\beta^{\prime}=\beta_{\mathrm{SC}}$, $V_{l}(r)=V_{0}(r)$, $V_{u}(r)=V_{\infty}(r)=r^{\alpha_{\mathrm{SC}}}$ and $\Psi_{\infty}(r)=r^{\beta_{\mathrm{SC}}}$.

\item\label{lb.sc6} We will prove that $(X_{n},d_{n},q_{1})$ pointed Gromov--Hausdorff converge to $(X_{\infty},d_{\infty},q_{1})$.  Define \[\widetilde{X}_{n}:=l^{-n}X_{0}=\{l^{-n}x:x\in X_{0}\}\subset X_{0},\ n\geq0,\]which is called the unbounded scaled Sierpi\'{n}ski carpet in \cite{BB00}, so that $\bigcap_{n=1}^{\infty}X_{n}=X_{\infty}$, see Figure \ref{f.sc}. Define a metric on $\widetilde{X}_{n}$: \[ \widetilde{d}_{n}(x,y):=l^{-n}d_{0}(l^{n}x,l^{n}y),\ \forall x,y\in \widetilde{X}_{n}.\] Hence the map $x\mapsto l^{-n}x$ is a bijective isometry between $(X_{n},d_{n})$ and $(\widetilde{X}_{n},\widetilde{d}_{n})$. It is easy to see that $\widetilde{d}_{n}$ is a geodesic metric on $\widetilde{X}_{n}$ and $\widetilde{d}_{n}(x,y)$ is the length (with respect to $d_{\mathbb{R}^{2}}$) of shortest path of in $\widetilde{X}_{n}$ connecting $x$ and $y$. It suffices to show that $(\widetilde{X}_{n},\widetilde{d}_{n},q_{1})$ pointed Gromov--Hausdorff converge to $(X_{\infty},d_{\infty},q_{1})$,  in other words, the standard planar Sierpi\'{n}ski carpet $X_{\infty}$ is an \emph{asymptotic cone} \cite[Definition 8.2.7]{BBI01} of the pre-carpet $X_{0}$ at $q_{1}$. By joining geodesic in $\widetilde{d}_{n}$ for each $n$ and using compactness, we see that \begin{equation}\label{e.neardist}
\sup_{x,y\in X_{\infty}}|d_{\infty}(x,y)-\widetilde{d}_{n}(x,y)|\lesssim l^{-n}.
\end{equation}
In fact, it is obvious that $\widetilde{d}_{n}(x,y)\leq d_{\infty}(x,y)$ for all $x,y\in X_{\infty}$ since $X_{\infty}\subset \widetilde{X}_{n}$; on the other hand, if we define $H_{n}$ to be the union of boundaries of the closed squares of side $l^{-n}$ with corners in $l^{-n}\mathbb{Z}^{2}$ that are contained in $\widetilde{X}_{n}$, then $H_{n}\subset X_{\infty}$ for all $n\geq1$. For any $x\in X_{\infty}$, let $v_{n}(x)\in H_{n}$ denote the lower-left hand corner of one of the squares containing $x$ so that $d_{\infty}(x,v_{n}(x))\leq C l^{-n}$ for some $C$ by \cite[(7.4)]{BB92}. For $x,y\in X_{\infty}$, $d_{\infty}(v_{n}(x),v_{n}(y))$ is less than the length of the shortest path in $H_{n}$ connecting $v_{n}(x)$ and $v_{n}(y)$, which equals $\widetilde{d}_{n}(v_{n}(x),v_{n}(y))$, since the shortest path in $\widetilde{X}_{n}$ can be chosen to be piecewise segments that are piecewise parallel to the shortest path in $H_{n}$. Therefore \begin{align}
d_{\infty}(x,y)&\leq d_{\infty}(x,v_{n}(x))+d_{\infty}(v_{n}(x),v_{n}(y))+d_{\infty}(v_{n}(y),y)\\ &\leq \widetilde{d}_{n}(v_{n}(x),v_{n}(y))+2Cl^{-n}\\&\leq  \widetilde{d}_{n}(x,y)+4Cl^{-n}
\end{align}
which gives \eqref{e.neardist}. 

It is easy to calculate that the Hausdorff distance between $\widetilde{X}_{n}$ and $X_{\infty}$ is
$d_{\mathrm{H}}(\widetilde{X}_{n},X_{\infty})\lesssim l^{-n}$. For each $x\in \widetilde{X}_{n}$, we choose $f_{n}(x)\in X_{\infty}$ such that $d_{n}(x,f_{n}(x))=\min_{y\in X_{\infty}}d_{n}(x,y)$. Obviously, $f_{n}(q_{1})=q_{1}$ for all $n\geq1$. For $x,y\in \widetilde{X}_{n}$, we have by triangle inequality that \begin{align}
&\phantom{\leq\ }\abs{d_{\infty}(f_{n}(x),f_{n}(y))-\widetilde{d}_{n}(x,y)}\\ &\leq \abs{d_{\infty}(f_{n}(x),f_{n}(y))-\widetilde{d}_{n}(f_{n}(x),f_{n}(y))}+\abs{\widetilde{d}_{n}(f_{n}(x),x)-\widetilde{d}_{n}(f_{n}(y),y)}\lesssim l^{-n}
\end{align}
Moreover, $B_{\infty}(q_{1},r)\subset N_{Cl^{-n}}(f_{n}(B_{n}(q_{1},r)))$ for $C$ large enough. Therefore by Definition \ref{d.pgh}, $(\widetilde{X}_{n},\widetilde{d}_{n},q_{1})$ pointed Gromov--Hausdorff converge to $(X_{\infty},d_{\infty},q_{1})$.
\begin{figure}[htbp]
\subfloat
{
\includegraphics[scale=8.3]{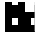}
}
\subfloat{
\includegraphics[scale=8.3]{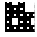}
}
\subfloat{
\includegraphics[scale=8.3]{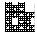}
}
\caption{Part of $(\widetilde{X}_{n},\widetilde{d}_{n})$, $1\leq n\leq3$.}
\label{f.sc}
\end{figure}

\item\label{lb.sc7} We can now apply Theorem \ref{t.main} and obtain a conservative, regular and strongly local Dirichlet form on the unbounded Sierpi\'{n}ski carpet $X_{\infty}$ satisfying $\hyperlink{HKE}{\mathrm{HKE}(r^{\beta_{\mathrm{SC}}})}$ and is the Mosco limit of a subsequence of $\{\mathcal{E}_{n}\}$. Compared to \cite{BB89}, this gives another construction of diffusion process on Sierpi\'{n}ski carpet. This argument can be adapted to construct diffusion on generalized Sierpi\'{n}ski carpet, as long as the heat kernel bounds on reflected Dirichlet spaces are known.
\end{enumerate}
\end{example}
\begin{example}
\label{ex.SG}
In this example, we re-construct a diffusion process on unbounded Sierpi\'{n}ski gasket from the diffusion process on the unbounded Sierpi\'{n}ski gasket cable system. 

Let $p_{0}:=(0,0)$, $p_{1}:=(1,0)$, $p_{2}:=(1/2,\sqrt{3}/2)$, $V_{0}=\{p_{k}\}_{k=0}^{2}$ and let $G_{0}$ be the triangle with vertices $V_{0}$. Define the similitudes $g_{k}:\mathbb{R}^{2}\to\mathbb{R}^{2}$ by $g_{k}(x):=x/2+p_{k}$, $0\leq k\leq2$. Let $V_{n+1}:=\bigcup_{k=0}^{2}g_{k}(V_{n})$, $G_{n+1}:=\bigcup_{k=0}^{2}g_{k}(G_{n})$ for $n\geq1$. Define $G:=\bigcap_{n=0}^{\infty}G_{n}$, $W_{\infty}:=\bigcup_{n=0}^{\infty}2^{n}G$. We call $W_{\infty}$ the \emph{unbounded Sierpi\'{n}ski gasket}. Let $d$ be the standard Euclidean distance on $\mathbb{R}^{2}$ and $d_{\infty}$ be the geodesic distance on $W_{\infty}$ with respect to $d$. Define $V_{*}:=\bigcup_{n=0}^{\infty}2^{n}V_{n}$ and $E:=\{(x,y)\in V_{*}\times V_{*}:d(x,y)=1\}$. Then $(V_{*}, E)$ is an infinite, locally finite, connected graph. By replacing each edge in $E$ by an isometric copy of the line segment $[0,1]$ (called \emph{cable}) and gluing them in an obvious way at the vertices, we obtain an unbounded connected closed set $W_{0}\subset \mathbb{R}^{2}$, called the corresponding \emph{cable system} of $(V_{*}, E)$  (see {Figure~\ref{fig8.2}}). We define a metric $d_{0}$ on $W_{0}$ by using Euclidean distance on each cable and extend $d_{0}$ to a metric on $W_{0}$ which agrees with the graph distance on $V_{*}$. It is easy to see that $d_{0}$ is a geodesic distance. Let $m_{0}$ be the Hausdorff measure on $(W_{0},d_{0})$, which coincides with the one-dimensional Lebesgue measure on each cable. Let 
\[\alpha_{\mathrm{SG}}:=\frac{\log3}{\log2} \text{ and }\beta_{\mathrm{SG}}:=\frac{\log5}{\log2}.\]
We define $V_{0}(r):=r\vee r^{\alpha_{\mathrm{SG}}}$ and $\Psi_{0}(r):=r^{2}\vee r^{\beta_{\mathrm{SG}}}$. It is known that \[m_{0}(B_{0}(x,r))\asymp V_{0}(r),\ r\geq0;\] see \cite[p.15562]{DRY23} and \cite[Lemma 2.1]{Bar98}. In \cite{BB04} (also \cite[Section 6]{BM18}) a conservative, strongly local regular Dirichlet form $(\mathcal{E}_{0},\mathcal{F}_{0})$ on $L^{2}(W_{0},m_{0})$ is defined, and the corresponding continuous Markov process is called the \emph{cable process}. It is known from \cite[Theorem 2.4]{Bar03} and \cite{BB04} (see also \cite[p.15563]{DRY23}) that the metric measure Dirichlet space $(W_{0},d_{0},m_{0},\mathcal{E}_{0},\mathcal{F}_{0})$ satisfies $\hyperlink{HKE}{\mathrm{HKE}(\Psi_{0})}$. We define a sequence of metric measure Dirichlet spaces $(W_{n},d_{n},m_{n},\mathcal{E}_{n},\mathcal{F}_{n})$, $n\geq1$, by 
\begin{equation}
W_{n}:=W_{0},\ d_{n}:=2^{-n}d_{0}\ \text{and}\ m_{n}:=2^{-\alpha_{\mathrm{SG}}n}m_{0}
\end{equation}
and
\begin{equation}
\mathcal{E}_{n}(u,v):=2^{(\beta_{\mathrm{SG}}-\alpha_{\mathrm{SG}})n}\mathcal{E}_{0}(u,v),\ u,v\in \mathcal{F}_{n}:=\mathcal{F}_{0}.
\end{equation}
Similar to Example \ref{ex.sc}-\ref{lb.sc5}, we see that $(W_{n},d_{n},m_{n},\mathcal{E}_{n},\mathcal{F}_{n})$ satisfies $\hyperlink{HKE}{\mathrm{HKE}(\Psi_{n})}$ with uniform constants, where  \begin{align}\label{e.psisgn}
\Psi_{n}(r):=2^{-\beta_{\mathrm{SG}}n}\Psi_{0}(2^{n}r),\ r\geq0,\ n\geq1,
\end{align}
and conditions \ref{lb.as1}-\ref{lb.as5} hold with $\beta=2$, $\beta^{\prime}=\beta_{\mathrm{SG}}$, $V_{l}(r)=V_{0}(r)$, $V_{u}(r)=V_{\infty}(r)=r^{\alpha_{\mathrm{SG}}}$ and $\Psi_{\infty}(r)=r^{\beta_{\mathrm{SG}}}$. Similar to Example \ref{ex.sc}-\ref{lb.sc6}, it is easy to verify that $(W_{n},d_{n},p_{0})$ pointed Gromov--Hausdorff converge to $(W_{\infty},d_{\infty},p_{0})$. By Theorem \ref{t.main}, we obtain a conservative, regular and strongly local Dirichlet form on the unbounded Sierpi\'{n}ski gasket $W_{\infty}$ satisfying $\hyperlink{HKE}{\mathrm{HKE}(r^{\beta_{\mathrm{SG}}})}$ and is the Mosco limit of a subsequence of $\{\mathcal{E}_{n}\}$.

\begin{figure}[htbp]
\subfloat
{
\includegraphics[scale=0.9]{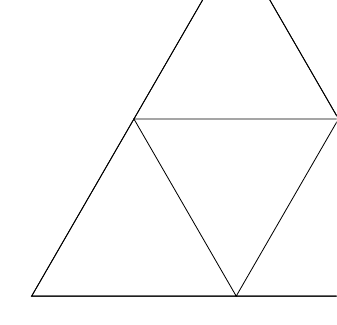}
}
\subfloat{
\includegraphics[scale=0.9]{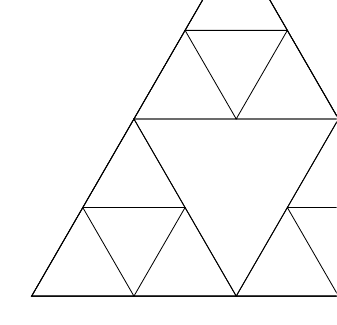}
}
\subfloat{
\includegraphics[scale=0.9]{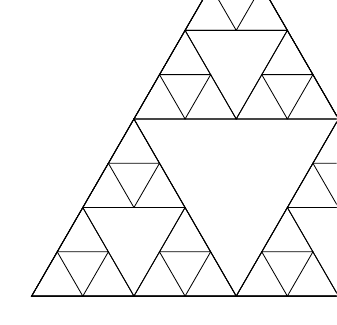}
}
\caption{Parts of the cable systems.}
\label{fig8.2}
\end{figure}
\end{example}

\begin{remark}
We note that Cao and Qiu \cite[Theorem 9.8 and Remark 9.9]{CQ23} have constructed conservative, strongly local, and regular Dirichlet forms on unconstrained Sierpi\'{n}ski carpets in $\mathbb{R}^3$ via approximations by cable systems and proved uniform parabolic Harnack inequalities and uniform heat kernel estimates. Their construction is conceptually similar to the approximation scheme used in Examples~\ref{ex.sc}, \ref{ex.SG}, and in the proof of Theorem~\ref{t.main}-\ref{lb.hke}, where the convergence of (global) heat kernels plays a crucial role. Unconstrained Sierpi\'{n}ski carpets thus provide another natural class of spaces to which Theorem~\ref{t.main} applies, as shown by the convergence of the space in \cite[Lemma 9.7-(a)]{CQ23} and the uniform heat kernel bounds in \cite[Theorem 9.4]{CQ23}. In particular, the Mosco convergence of the energy forms holds by Theorem~\ref{t.main}-\ref{lb.mosco}.
\end{remark}

\begin{example}\label{ex.inverse}
Barlow in \cite{Bar04} shows that, the existence of an infinite locally finite weighted graph that satisfying $\mathrm{V}_{\alpha}$, $\mathrm{EHI}$ and $\mathrm{E}_{\beta}$ (see \cite{Bar04} for definitions), which is equivalent to the random walk satisfying the following type heat kernel estimates: for some $C_{1},c_{2}\geq1$ and all $n\geq1$,
\begin{equation}\label{e.hkegraph}
\begin{aligned}
p_{n}(x,y)&\leq \frac{C_{1}}{n^{\alpha/\beta}}\exp\left(-c_{2}\left(\frac{d(x,y)^{\beta}}{n}\right)^{1/(\beta-1)}\right)\\
\text{and \quad}p_{n}(x, y) + p_{n+1}(x, y) &\geq \frac{C^{-1}_{1}}{n^{\alpha/\beta}}\exp\left(-c^{-1}_{2}\left(\frac{d(x,y)^{\beta}}{n}\right)^{1/(\beta-1)}\right),
\end{aligned}
\end{equation}
is equivalent to \begin{equation}\label{e.alphabeta}
\alpha\geq1\text{ and }2\leq\beta\leq\alpha+1.
\end{equation} 
In a recent paper by Murugan \cite{Mur25}, the analogous result on diffusion on metric measure space is proved: the existence of a metric measure Dirichlet space satisfying the full heat kernel bounds $\hyperlink{HKEf}{\mathrm{HKE}_{\mathrm{f}}(\Psi)}$ with $m(B(x,r))\asymp V(r)$ is equivalent to \begin{equation}\label{e.alphabeta1}
\frac{R^{2}}{r^{2}}\lesssim \frac{\Psi(R)}{\Psi(r)}\lesssim \frac{RV(R)}{rV(r)} \quad\forall\ 0<r\leq R;
\end{equation}see Remark \ref{r.main}-\ref{lb.rm1} for definition of $\hyperlink{HKEf}{\mathrm{HKE}_{\mathrm{f}}(\Psi)}$. In the special case of $V(r)=r^{\alpha}$ and $\Psi(r)=r^{\beta}$, \eqref{e.alphabeta1} is equivalent to \eqref{e.alphabeta}. In this example, we give an independent proof of a special case that \eqref{e.alphabeta} implying the existence of such metric measure Dirichlet space, as an application of Theorem \ref{t.main}. The method we use here is to take the pointed Gromov--Hausdorff limit of the cable systems of the graphs constructed by Barlow in \cite{Bar04}.
\begin{theorem}[{cf. \cite[Theorem 2.2]{Mur25}}]\label{t.inverse}
For any $\alpha\geq1\text{ and }2\leq\beta\leq\alpha+1$, there is a metric measure space $(X,d,m)$ and a conservative, regular and strongly local symmetric Dirichlet form on $L^{2}(X,m)$ that satisfies $\hyperlink{HKEf}{\mathrm{HKE}_{\mathrm{f}}(\Psi)}$ with $\Psi(r)=r^{\beta}$ and $m(B(x,r))\asymp r^{\alpha}$.
\end{theorem}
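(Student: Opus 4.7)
The plan is to apply Theorem \ref{t.main} to a pointed Gromov-Hausdorff convergent subsequence of rescaled cable systems built over Barlow's weighted graphs, in the spirit of Examples \ref{ex.sc} and \ref{ex.SG}. By \cite{Bar04}, for any pair $(\alpha,\beta)$ with $\alpha\geq1$ and $2\leq\beta\leq\alpha+1$, there exists an infinite, locally finite, connected weighted graph $(V,E,\mu_E)$ whose random walk satisfies the heat kernel estimates \eqref{e.hkegraph}. I first replace each edge of $(V,E)$ by an isometric copy of a unit Euclidean interval and glue them at the vertices to produce the cable system $W_0$ with its geodesic metric $d_0$ and one-dimensional Hausdorff measure $m_0$. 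The construction in \cite{BB04} and \cite[Section 6]{BM18}, together with the discrete-to-continuous transfer of heat kernel bounds (via a time change reducing the Brownian motion on $W_0$ to the graph walk at vertex visits), produces a conservative, regular, strongly local Dirichlet form $(\mathcal{E}_0,\mathcal{F}_0)$ on $L^2(W_0,m_0)$ satisfying $\HKE(\Psi_0)$ with $\Psi_0(r):=r^2\vee r^\beta$, together with the volume bound $m_0(B_0(x,r))\asymp V_0(r):=r\vee r^\alpha$.

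Fix a vertex $p_0\in V\subset W_0$ and, for $n\geq1$, define the rescaled metric measure Dirichlet spaces by $X_n:=W_0$, $d_n:=2^{-n}d_0$, $m_n:=2^{-\alpha n}m_0$, $\mathcal{F}_n:=\mathcal{F}_0$, and $\mathcal{E}_n:=2^{(\beta-\alpha)n}\mathcal{E}_0$. The same scaling computation as in Example \ref{ex.sc}-\ref{lb.sc5} shows that the heat kernel of $(\mathcal{E}_n,\mathcal{F}_n)$ with respect to $m_n$ equals $p^{(n)}(t,x,y)=2^{\alpha n}p^{(0)}(2^{\beta n}t,x,y)$ and satisfies $\HKE(\Psi_n)$ uniformly in $n$, where $\Psi_n(r):=2^{-\beta n}\Psi_0(2^nr)=2^{(2-\beta)n}r^2\vee r^\beta$ and $V_n(r):=2^{-\alpha n}V_0(2^nr)=2^{(1-\alpha)n}r\vee r^\alpha$. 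Because $\alpha\geq1$ and $\beta\geq2$, both $V_n\to V_\infty$ and $\Psi_n\to\Psi_\infty$ uniformly on compact subsets of $[0,\infty)$ with $V_\infty(r)=r^\alpha$ and $\Psi_\infty(r)=r^\beta$, while the uniform bounds $r^\alpha\leq V_n(r)\leq r\vee r^\alpha$ and $r^\beta\leq\Psi_n(r)\leq r^2\vee r^\beta$ immediately verify the doubling conditions in \ref{lb.as2} and \ref{lb.as4}-\ref{lb.as5}.

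The one substantive remaining ingredient is to arrange the pointed Gromov-Hausdorff convergence. Each $(X_n,d_n)$ is a proper length space, and the uniform volume doubling forces $\{(X_n,d_n,p_0)\}$ to be pointed totally bounded: for every $R,\epsilon>0$, the ball $B_n(p_0,R)$ admits a covering by at most $C(R/\epsilon)^{\alpha'}$ balls of radius $\epsilon$ with $C$ independent of $n$. Gromov's compactness theorem \cite[Proposition 11.3.12, Theorem 11.3.16]{HKST15} then yields a subsequence $(X_{n_k},d_{n_k},p_0)\to(X_\infty,d_\infty,p_\infty)$ in the pointed Gromov-Hausdorff topology, with $(X_\infty,d_\infty)$ a proper length space, so that all hypotheses \ref{lb.as1}-\ref{lb.as6} of Theorem \ref{t.main} hold along this subsequence.

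Applying Theorem \ref{t.main} produces a Radon measure $m_\infty$ on $X_\infty$ with $m_\infty(B_\infty(x,r))\asymp r^\alpha$, together with a conservative, regular, strongly local Dirichlet form $(\mathcal{E}_\infty,\mathcal{F}_\infty)$ on $L^2(X_\infty,m_\infty)$ satisfying $\HKE(r^\beta)$. Setting $(X,d,m):=(X_\infty,d_\infty,m_\infty)$ and invoking Remark \ref{r.main}-\ref{lb.rm1} to upgrade $\HKE(r^\beta)$ to $\HKE_{\mathrm{f}}(r^\beta)$ (using that $X_\infty$ is a proper length space) finishes the proof. The principal potential obstacle is the transfer of heat kernel bounds from Barlow's discrete graph to its continuous cable system; while this transfer is essentially standard and recorded in the references above, pinning down the correct on-diagonal behaviour at small scales, where the cable system is locally one-dimensional and therefore forces the two-regime shape $r^2\vee r^\beta$ and $r\vee r^\alpha$ of $\Psi_0$ and $V_0$, takes some care. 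Everything beyond this point is the scaling-and-compactness argument sketched above.
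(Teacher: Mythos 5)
Your proposal follows essentially the same route as the paper: take Barlow's weighted graph realizing the given $(\alpha,\beta)$, pass to its cable system to obtain a strongly local Dirichlet form satisfying $\HKE(r^2\vee r^\beta)$ with volume $r\vee r^\alpha$, rescale to produce a sequence satisfying assumptions \ref{lb.as1}--\ref{lb.as5} uniformly, invoke pointed total boundedness and Gromov compactness for \ref{lb.as6}, and finish with Theorem \ref{t.main} and Remark \ref{r.main}-\ref{lb.rm1}. The only cosmetic differences are your choice $\varepsilon_n=2^{-n}$ in place of an arbitrary null sequence and a slightly less explicit citation of the lemmas in \cite{BB04} and \cite{GT12} used to transfer volume, elliptic Harnack, and exit-time bounds from the graph to the cable system; the substance is identical.
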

\begin{proof}
\begin{enumerate}[label=\textup{({\arabic*})},align=right,leftmargin=*,topsep=5pt,parsep=0pt,itemsep=2pt]
\item Fix $\alpha$ and $\beta$ such that $\alpha\geq1\text{ and }2\leq\beta\leq\alpha+1$. By \cite{Bar04} and \cite[Theorem 3.1]{GT02}, there is an infinite locally finite weighted graph $\Gamma=(V,E,\mu)$ satisfying \eqref{e.hkegraph}. It can be directly checked that $\Gamma$ also satisfies the $p_{0}$-condition stated in \cite[(1.5)]{BB04} (or use the heat kernel lower bound in \eqref{e.hkegraph}). Let us denote the cable system corresponding to $\Gamma$ by $\Gamma^{(c)}$. Similar to Example \ref{ex.SG}, we equip $\Gamma^{(c)}$ a metric $d$ by extending the Euclidean distance on each cable, and let $m$ be the Hausdorff measure on $\Gamma^{(c)}$, so that $(\Gamma^{(c)},d,m)$ is a proper metric measure space. The Dirichlet form corresponding to the cable process on $(\Gamma^{(c)},d,m)$ is denoted by $(\mathcal{E},\mathcal{F})$. 

\item We denote $V(r):=r\vee r^{\alpha}$ and $\Psi(r):=r^{2}\vee r^{\beta}$. By \cite[Lemma 2.1]{BB04},\[m(B(x,r))\asymp V(r),\ r\geq0.\] The elliptic Harnack inequality $\mathrm{EHI}$ holds on $(\mathcal{E},\mathcal{F})$ by \cite[Corollary 2.5]{BB04}. The mean exit time estimate $\mathrm{E}_{\beta}$ on cable process holds by \cite[Lemma 2.6]{BB04}. Therefore we can apply \cite[Theorem 5.15]{GT12} and conclude that $(\mathcal{E},\mathcal{F})$ on $L^{2}(\Gamma^{(c)},m)$ satisfies $\hyperlink{HKE}{\mathrm{HKE}(\Psi)}$. Fix any sequence of positive real numbers $\{\varepsilon_{n}\}_{n=1}^{\infty}$ with $\varepsilon_{1}=1$ and $\varepsilon_{n}\downarrow 0$. We define a sequence of metric measure Dirichlet spaces $(\Gamma^{(c)}_{n},d_{n},m_{n},\mathcal{E}_{n},\mathcal{F}_{n})$, $n\geq1$, by 
\begin{equation}
\Gamma^{(c)}_{n}:=\Gamma^{(c)},\ d_{n}:=\varepsilon_{n}d\ \text{and}\ m_{n}:=\varepsilon_{n}^{\alpha}m
\end{equation}
and
\begin{equation}
\mathcal{E}_{n}(u,v):=\varepsilon_{n}^{\beta-\alpha}\mathcal{E}(u,v),\ u,v\in \mathcal{F}_{n}:=\mathcal{F}.
\end{equation}
Define $V_{n}(r):=\varepsilon_{n}^{\alpha}V(\varepsilon_{n}^{-1}r)$ and $\Psi_{n}(r):=\varepsilon_{n}^{\beta}\Psi(\varepsilon_{n}^{-1}r)$ for $r\geq0$ and $n\geq1$. Then the metric measure Dirichlet space $(\Gamma^{(c)}_{n},d_{n},m_{n},\mathcal{E}_{n},\mathcal{F}_{n})$ satisfies \ref{lb.as1}-\ref{lb.as5} with $V_{\infty}(r)=r^{\alpha}$, $\Psi_{\infty}(r)=r^{\beta}$. 
\item Fix $p\in \Gamma^{(c)}=\Gamma^{(c)}_{n}$. We claim that $(\Gamma^{(c)}_{n},d_{n}, p)$ is \emph{pointed totally bounded} \cite[p.321]{HKST15}: there is a function $N : (0, \infty)\times (0, \infty) \to (0, \infty)$ such that, for each $0 < r < R$ and $n\geq1$, the closed ball $B_{n}(p,R)$ in $(\Gamma^{(c)}_{n},d_{n})$ contains an $r$-net of cardinality at most $N(r,R)$. Since in the sense of sets, $B_{n}(p,R)$ is identical to $B_{1}(p,\varepsilon_{n}^{-1}R)$, which, by Zorn's Lemma, contains an $\varepsilon_{n}^{-1}r$-net of cardinality at most $C(R/r)^{\alpha}$ by the existence of doubling measure $m$ and \cite[Exercise 10.17]{Hei01} for $C\geq1$ independent of $n$. We thus may take $N(r,R)=C(R/r)^{\alpha}$.
\item Since $(\Gamma^{(c)}_{n},d_{n}, p)$ are proper length and pointed totally bounded, we conclude by Gromov's compactness theorem \cite[Theorem 11.3.16]{HKST15} that there is a proper length pointed metric space $(\Gamma^{(c)}_{\infty},d_{\infty}, p_{\infty})$ such that \[(\Gamma^{(c)}_{n},d_{n}, p)\xrightarrow{\mathrm{p-GH}}(\Gamma^{(c)}_{\infty},d_{\infty}, p_{\infty})\]
along some subsequence. Therefore \ref{lb.as6} holds along this subsequence.
\item Now it is time to apply Theorem \ref{t.main} and Remark \ref{r.main}-\ref{lb.rm1}, and conclude that there is a measure $m_{\infty}$ on $(\Gamma^{(c)}_{\infty},d_{\infty})$ that is Ahlfors $\alpha$-regular, i.e., $m_{\infty}(B_{\infty}(x,r))\asymp r^{\alpha}$, and there is a conservative, regular and strongly local symmetric Dirichlet form on $L^{2}(\Gamma^{(c)}_{\infty},m_{\infty})$ that satisfies heat kernel bounds $\hyperlink{HKEf}{\mathrm{HKE}_{\mathrm{f}}(r^{\beta})}$.
\end{enumerate}
\end{proof}
\end{example}
\begin{remark}

We conclude this paper with several remarks on the scope and limitations of Theorem \ref{t.main}.
\begin{enumerate}[label=\textup{({\arabic*})},align=right,leftmargin=*,topsep=5pt,parsep=0pt,itemsep=2pt]
\item In certain situations, the convergence of the walk dimension (Assumption \ref{lb.as3}) is not necessary for the convergence of Brownian motions. For example, in recent work \cite{CHK25}, it was proved that the Brownian motions on the $l$-level Sierpi\'{n}ski gasket $\mathrm{SG}(l)$, under suitable time changes, converge to the standard reflected Brownian motion on a tetrahedron as $l\to\infty$. However, Theorem \ref{t.main} does not cover this situation, since the convergence of the walk dimension required in Assumption \ref{lb.as3} fails in this setting; see \cite[Remark 1.4-(b)]{CHK25}.
\item In Theorem \ref{t.main}-\ref{lb.as4}, the volume function $(x,r)\mapsto m_{j}(B_{j}(x,r))$ is assumed to be uniformly comparable to a function $V_{j}(r)$ that is independent of the center $x$. In particular, the volume growth is required to be essentially uniform in $x$. This assumption is essential for obtaining the upper bound for the eigenvalues in Proposition \ref{p.weyl}. Nevertheless, it restricts the scope of the result, and it would be desirable to obtain a similar stability result under weaker volume regularity conditions.
\item The proof of Theorem \ref{t.main} relies heavily on the use of the heat kernel and its two-sided estimates. Such tools are generally unavailable for \emph{$p$-energy forms} on metric measure spaces when $p\neq2$. Extending stability results of this type to the setting of $p$-energy forms therefore requires substantially different techniques, and remains an interesting direction for future research.
\end{enumerate}
\end{remark}
\vspace{5pt}

\noindent \textbf{Acknowledgments.} 
I am very grateful to Mathav Murugan for introducing me to the problem of constructing a Dirichlet form on limit space, for the suggestion to apply the main result to Example \ref{ex.inverse}, and for many helpful discussions regarding this topic. I thank Jiaxin Hu for his helpful comments and interest in this paper. The author is very grateful to the anonymous referees for their careful reading of this manuscript and many helpful suggestions.

\vspace{11pt}
\noindent Department of Mathematical Sciences, Tsinghua University, Beijing 100084, China

\vspace{3pt}
\noindent \texttt{aobochen.math@hotmail.com}

\vspace{-2pt}
\noindent \texttt{cab21@mails.tsinghua.edu.cn}
\end{document}